
\documentclass[12pt,a4paper,twoside,reqno]{amsart}
\setlength{\parskip}{0.3\baselineskip}
\setlength{\oddsidemargin}{0pt}
\setlength{\evensidemargin}{0pt}
\setlength{\textwidth}{460pt}
\setlength{\textheight}{670pt}
\setlength{\topmargin}{-20pt}
\title[On the K\"ahler--Yang--Mills--Higgs  equations]
{On the K\"ahler--Yang--Mills--Higgs equations}


\author[L. \'Alvarez-C\'onsul]{Luis \'Alvarez-C\'onsul}
\address{Instituto de Ciencias Matem\'aticas (CSIC-UAM-UC3M-UCM)\\ Nicol\'as Cabrera 13--15, Cantoblanco\\ 28049 Madrid, Spain}
  \email{l.alvarez-consul@icmat.es}
\author[M. Garcia-Fernandez]{Mario Garcia-Fernandez}
\address{Dep. Matem\'aticas\\ Universidad Aut\'onoma de Madrid\\ and
  Instituto de Ciencias Matem\'aticas (CSIC-UAM-UC3M-UCM)\\ Ciudad
  Universitaria de Cantoblanco\\ 28049 Madrid, Spain}
\email{mario.garcia@icmat.es}
\author[O. Garc\'{\i}a-Prada]{Oscar Garc\'{\i}a-Prada}
\address{Instituto de Ciencias Matem\'aticas (CSIC-UAM-UC3M-UCM)\\ Nicol\'as Cabrera 13--15, Cantoblanco\\ 28049 Madrid, Spain}
  \email{oscar.garcia-prada@icmat.es}

\dedicatory{To Simon Donaldson on his 60th birthday}

\thanks{Partially supported by the Spanish MINECO under ICMAT Severo Ochoa project No. SEV-2015-0554, and under grant No.MTM2016-81048-P}

\usepackage{hyperref,url}
\usepackage{amssymb,latexsym}
\usepackage{amsmath,amsthm}
\usepackage[latin1]{inputenc}
\usepackage{enumerate}
\usepackage[all]{xy} \CompileMatrices
\SelectTips{cm}{12} 

\usepackage{verbatim} 

\usepackage{wrapfig}
\usepackage{graphicx}


\def\YYint#1#2#3{{\setbox0=\hbox{$#1{#2#3}{\int}$}
    \vcenter{\hbox{$#2#3$}}\kern-.52\wd0}}

\theoremstyle{plain}
\newtheorem{theorem}{Theorem}[section]
\newtheorem{lemma}[theorem]{Lemma}
\newtheorem{corollary}[theorem]{Corollary}
\newtheorem{proposition}[theorem]{Proposition}

\newtheorem*{theorem*}{Theorem}
\theoremstyle{definition}
\newtheorem{definition}[theorem]{Definition}
\newtheorem{definition-theorem}[theorem]{Definition-Theorem}

\newtheorem*{acknowledgements}{Acknowledgements}
\theoremstyle{remark}

\numberwithin{equation}{section} \setcounter{tocdepth}{1}

\setcounter{tocdepth}{1}


\newcommand{\tr}{\operatorname{tr}}
\newcommand{\pr}{p}
\newcommand{\Id}{\operatorname{Id}}

\newcommand{\Hom}{\operatorname{Hom}}
\newcommand{\ad}{\operatorname{ad}}
\newcommand{\Ad}{\operatorname{Ad}}
\newcommand{\Aut}{\operatorname{Aut}}

\newcommand{\dbar}{\bar{\partial}}
\newcommand{\imag}{\mathop{{\fam0 {\textbf{i}}}}\nolimits}

\newcommand{\CC}{{\mathbb C}}
\newcommand{\PP}{{\mathbb P}}

\newcommand{\RR}{{\mathbb R}}

\newcommand{\rk}{\operatorname{rk}}

\renewcommand{\(}{\left(}
\renewcommand{\)}{\right)}

\newcommand{\Vol}{\operatorname{Vol}}
\newcommand{\defeq}{\mathrel{\mathop:}=} 

\newcommand{\surj}{\to\kern-1.8ex\to}

\newcommand{\lto}{\longrightarrow}
\newcommand{\lra}[1]{\stackrel{#1}{\longrightarrow}}

\newcommand{\cA}{\mathcal{A}}

\newcommand{\cJ}{\mathcal{J}}
\newcommand{\cJi}{\mathcal{J}^{i}}
\newcommand{\cK}{\mathcal{K}}

\newcommand{\cF}{\mathcal{F}}

\newcommand{\cG}{\mathcal{G}}

\newcommand{\cO}{\mathcal{O}}

\newcommand{\cR}{\mathcal{R}}
\newcommand{\cS}{\mathcal{S}}

\newcommand{\cT}{{\mathcal{T}}}

\newcommand{\Lie}{\operatorname{Lie}}

\newcommand{\LieG}{\operatorname{Lie} \cG}
\newcommand{\cX}{{\widetilde{\mathcal{G}}}}
\newcommand{\LieX}{\operatorname{Lie} \cX}

\newcommand{\cH}{\mathcal{H}} 
\newcommand{\LieH}{\Lie\cH}

\newcommand{\GG}{{K^c}} 
\newcommand{\LieGG}{\mathfrak{k}^c} 
\newcommand{\GL}{\operatorname{GL}}

\newcommand{\U}{\operatorname{U}}

\newcommand{\Diff}{\operatorname{Diff}}

\newcommand{\glg}{\mathfrak{g}}



\DeclareMathOperator{\Tr}{Tr}

\begin{document}

\begin{abstract}
In this paper we introduce a set of equations on a principal bundle over a
compact complex manifold coupling a connection on the principal bundle, 
a section of an associated bundle with K\"ahler fibre, and a K\"ahler structure
on the base.  These equations are a generalization of the K\"ahler--Yang--Mills equations introduced by the authors. They also generalize the constant scalar
curvature for a K\"ahler metric studied by Donaldson and others, as well as
the Yang--Mills--Higgs equations studied by Mundet i Riera. We provide a moment
map interpretation of the equations, construct some first examples, and study obstructions to the existence of solutions.
\end{abstract}

\maketitle

\setlength{\parskip}{5pt}
\setlength{\parindent}{0pt}

\tableofcontents

\section{Introduction}
\label{sec:intro}

In the 1990s, Donaldson and Fujiki observed independently that moment
maps play a central role in K\"ahler geometry~\cite{D1,Fj}. Since
then, they have been fruitfully applied in the problem of finding
constant scalar curvature K\"ahler metrics, acting as a guiding
principle for many advances in this topic such as the recent solution
of the K\"ahler--Einstein problem~\cite{ChDoSun}. As noticed
in~\cite{AGG}, the moment map picture for K\"ahler metrics extends to
the study of equations coupling a K\"ahler metric on a compact complex
manifold and a connection on a principal bundle over it, known as the
K\"ahler--Yang--Mills equations. Alike the constant scalar curvature
K\"ahler metrics can be used to understand the moduli space of
polarised manifolds, these equations are natural in the study of the
algebro-geometric moduli problem for bundles and varieties, suggested
in~\cite{Yau2005}. 

Motivated by the search of the simplest non-trivial solutions of the
K\"ahler--Yang--Mills equations, the authors studied~\cite{AGG2} the
dimensional reduction of the equations on the product of a Riemann
surface with the complex projective line. This approach to the
K\"ahler--Yang--Mills equations provided a new theory for abelian
vortices on the Riemann surface~\cite{Brad,Noguchi,G1,G3} with
back-reaction of the metric, described by solutions of the
`gravitating vortex equations', and showed an unexpected relation with
the physics of cosmic strings~\cite{AGG2,AGGP}. The further coupling
of a K\"ahler metric and a connection with a `Higgs field' considered
in these works also reveils newly emergent phenomena, not observed in
the theory originally introduced in~\cite{AGG}.

Building on~\cite{AGG,AGG2,AGGP}, this paper develops some basic
pieces of a general moment-map theory for the coupling of a K\"ahler
metric on a compact complex manifold $X$, a connection on a principal
bundle $E$ over $X$, and a Higgs field $\phi$, given by a section of a
K\"ahler fibration associated to $E$. Our treatment of the Higgs field
$\phi$ is inspired by, on the one hand, work on the Yang--Mills--Higgs
equations by Mundet i Riera~\cite{MR}, and, other hand, Donaldson's
study of actions of diffeomorphism groups on spaces of sections of a
bundle~\cite{D7} (see Section~\ref{sec:extended-group}). As we will
see, the K\"ahler--Yang--Mills--Higgs equations introduced in this
paper lead to a very rich theory (see Section~\ref{sec:KYMH}), which
comprises a large class of interesting examples of moment-map
equations (see Sections~\ref{sec:quivers} and~\ref{sec:examples}). In
addition, we expect that these equations may provide a natural
framework for the interaction of K\"ahler geometry and a certain class
of unified field theories in physics~\cite{Yangbook} (see
Section~\ref{sub:gravvortex}).

\begin{acknowledgements}
The authors wish to thank D. Alfaya and T. L. G\'omez for useful discussions.
\end{acknowledgements}

\section{Hamiltonian actions of the extended gauge group}
\label{sec:extended-group}

\subsection{The space of connections}
\label{subsec:extended-group}

Details for this section can be found in \cite{AGG}.

Let $X$ be a compact symplectic manifold of dimension $2n$, with
symplectic form $\omega$. Let $G$ be a compact Lie group with Lie algebra 
$\mathfrak{g}$ and $E$ be a smooth
principal $G$-bundle on $X$, with projection map $\pi\colon E\to
X$. Let $\cH$ be the group of Hamiltonian symplectomorphisms of
$(X,\omega)$ and $\Aut E$ be the group of automorphisms of the bundle
$E$. Recall that an \emph{automorphism} of $E$ is a $G$-equivariant
diffeomorphism $g\colon E\to E$. Any such automorphism covers a unique
diffeomorphism $\check{g}\colon X\to X$, i.e. a unique $\check{g}$
such that $\pi\circ g=\check{g}\circ \pi$. We define the
\emph{Hamiltonian extended gauge group} (to which we will simply refer
as extended gauge group) of $E$,
\[
\cX \subset \Aut E,
\]
as the group of automorphisms which cover elements of $\cH$. Then the
gauge group of $E$  is the
normal subgroup $\cG\subset\cX$ of automorphisms covering the
identity.

The map $\cX \lra{\pr} \cH$ assigning to each automorphism $g$ the
Hamiltonian symplectomorphism $\check{g}$ that it covers is
surjective. We thus have an exact sequence of
Lie groups
\begin{equation}
\label{eq:coupling-term-moment-map-1}
  1\to \cG \lra{\iota} \cX \lra{\pr} \cH \to 1,
\end{equation}
where $\iota$ is the inclusion map.

The spaces of smooth $k$-forms on $X$ and smooth $k$-forms with values in
any given vector bundle $F$ on $X$ are denoted by $\Omega^{k}$ and
$\Omega^{k}(F)$, respectively. Fix a positive definite inner product
on $\mathfrak{g}$, invariant under the adjoint action, denoted
\begin{equation}\label{eq:Lie-algebra-inner-product}
  (\cdot,\cdot)\colon \mathfrak{g} \otimes \mathfrak{g} \lto \RR.
\end{equation}
This product induces a metric on the adjoint bundle $\ad E=E\times_G
\mathfrak{g}$, which extends to a bilinear map on $(\ad E)$-valued
differential forms (we use the same notation as in~\cite[\S 3]{AB})
\begin{equation}
\label{eq:Pairing}
\Omega^p(\ad E) \times \Omega^q(\ad E) \lto \Omega^{p+q}
  \colon (a_p,a_q) \longmapsto a_p\wedge a_q.
\end{equation}
We consider the operator
\begin{equation}
\label{eq:Lambda}
  \Lambda=\Lambda_\omega :\Omega^k\lto \Omega^{k-2}\colon \psi \longmapsto \omega^{\sharp} \lrcorner \psi,
\end{equation}
where $\sharp$ is the operator acting on $k$-forms induced by the
symplectic duality $\sharp\colon T^*X \to TX$ and $\lrcorner$ denotes
the contraction operator. Its linear extension to $\Omega^k(\ad E)$ is
also denoted $\Lambda:\Omega^k(\ad E)\to \Omega^{k-2}(\ad E)$ (we use
the same notation as, e.g., in~\cite{D3}).

Let $\cA$ be the set of connections on $E$. This is an affine space
modelled on $\Omega^1(\ad E)$. The 2-form on $\cA$ defined by
\begin{equation}
\label{eq:SymfC}
\omega_{\cA}(a,b) = \int_X a \wedge b \wedge \frac{\omega^{n-1}}{n-1!}
\end{equation}
for $a,b \in T_A \cA = \Omega^1(\ad E)$, $A\in\cA$, is a
symplectic form. 

There is an action of $\Aut E$, and hence of the extended gauge group, on the
space $\cA$ of connections on $E$. To define this action, we view the elements
of $\cA$ as $G$-equivariant splittings $A\colon TE\to VE$ of the short exact
sequence
\begin{equation}
\label{eq:principal-bundle-ses}
  0 \to VE\lto TE\lto \pi^*TX \to 0,
\end{equation}
where $VE=\ker d\pi$ is the vertical bundle. Using the action of $g\in \Aut E$
on $TE$, its action on $\cA$ is given by $g \cdot A \defeq g\circ A \circ
g^{-1}$. Any such splitting $A$ induces a vector space splitting of the Atiyah
short exact sequence
\begin{equation}
\label{eq:Ext-Lie-alg-3}
0\to \LieG \lra{\iota} \Lie(\Aut E) \lra{\pr} \Lie(\Diff X) \to
0
\end{equation}
(cf.~\cite[equation~(3.4)]{AB}), where $\Lie(\Diff X)$ is the Lie algebra of
vector fields on $X$ and $\Lie(\Aut E)$ is the Lie algebra of $G$-invariant
vector fields on $E$. Abusing of the notation, this splitting is given by maps
\begin{equation}
\label{eq:theta-thetaperp}
A\colon \Lie(\Aut E)\lto \LieG, \quad A^\perp\colon
\Lie(\Diff X) \lto \Lie(\Aut E)
\end{equation}
such that $\iota\circ A + A^\perp\circ \pr =\Id$, where $A$
is the vertical projection and $A^\perp$ the horizontal
lift of vector fields on $X$ to vector fields on $E$, given by the connection.

It is easy to see that the $\cX$-action on $\cA$ is symplectic. An equivariant moment map 
for this action was calculated in \cite{AGG}. To give an explicit formula, we use that the splitting~\eqref{eq:theta-thetaperp} restricts to a splitting of the exact
sequence
\begin{equation}
\label{eq:Ext-Lie-alg-2}
 0 \to \LieG \lra{\iota} \LieX \lra{\pr} \LieH \to 0
\end{equation}
induced by~\eqref{eq:coupling-term-moment-map-1}. Consider the
isomorphism of Lie algebras
\begin{equation}
\label{eq:LieH}
\LieH\cong C^{\infty}_0(X),
\end{equation}
where $\LieH$ is the Lie algebra of Hamiltonian vector fields on $X$
and $C^{\infty}_0(X)$ is the Lie algebra of smooth real functions on
$X$ with zero integral over $X$ with respect to $\omega^{n}$, with
the Poisson bracket. This isomorphism is induced by the map
$C^{\infty}(X)\to \LieH \colon f\mapsto \eta_f$, which to each
function $f$ assigns its Hamiltonian vector field $\eta_f$,
defined by
\begin{equation}
\label{eq:eta_phi}
df= \eta_f \lrcorner \omega.
\end{equation}
Let $F_A\in\Omega^2(\ad E)$ be  the curvature of $A\in\cA$ and $z$ be an
element of the space
\begin{equation}
\label{eq:centre-z}
  \mathfrak{z}=\mathfrak{g}^G
\end{equation}
of elements of $\mathfrak{g}$ which are invariant under the adjoint
$G$-action, that we identify with sections of $\ad E$. We have the following.

\begin{proposition}
\label{prop:momentmap-X}
The $\cX$-action on $\cA$ is Hamiltonian, with equivariant moment map
$\mu_\cX\colon \cA\to (\LieX)^*$ given by
\begin{equation}
\label{eq:thm-muX}
  \langle \mu_\cX(A),\zeta\rangle = \int_X A\zeta \wedge (\Lambda F_A - z) \frac{\omega^{n}}{n!} -\frac{1}{4}\int_X f\(\Lambda^2 (F_A\wedge F_A) - 4 \Lambda F_A \wedge z\)\frac{\omega^{n}}{n!}
\end{equation}
for all $\zeta\in\LieX$, $A\in\cA$, where $f\in C_0^\infty(X)$ corresponds to $p(\zeta)$ via \eqref{eq:Ext-Lie-alg-2} and \eqref{eq:LieH}. 
\end{proposition}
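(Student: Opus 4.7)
The plan is to verify directly the two defining properties of an equivariant moment map: (i) for every $\zeta \in \LieX$, $A \in \cA$, and $a \in T_A\cA = \Omega^1(\ad E)$,
\[
d\langle \mu_\cX(A),\zeta\rangle(a) = \omega_\cA(Y_\zeta(A), a),
\]
where $Y_\zeta$ is the fundamental vector field of the $\cX$-action on $\cA$; and (ii) $\cX$-equivariance of $\mu_\cX$ with respect to the coadjoint action.

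The first ingredient is an explicit formula for $Y_\zeta(A)$. Using the splitting \eqref{eq:theta-thetaperp} determined by $A$, decompose $\zeta = \iota(s_\zeta) + A^\perp(\check\zeta)$ with $s_\zeta = A\zeta \in \Omega^0(\ad E)$ and $\check\zeta = \pr(\zeta)$. A standard Lie-derivative computation on $E$ (applying Cartan's formula to $-L_\zeta A$, with $A$ regarded as a $\glg$-valued $1$-form on $E$) yields
\[
Y_\zeta(A) = -d_A s_\zeta + \iota_{\check\zeta} F_A \in \Omega^1(\ad E).
\]
The second ingredient is the $A$-dependence of the vertical projection $\zeta \mapsto A\zeta$: if $A$ is varied by $a \in \Omega^1(\ad E)$ while $\zeta$ is held fixed as a vector field on $E$, then $\dot s_\zeta = \iota_{\check\zeta} a$, because the new vertical projection picks up an extra term $a(\check\zeta)$ along the previously horizontal lift of $\check\zeta$. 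This $A$-dependence is the source of the quadratic-in-$F_A$ terms in $\mu_\cX$.

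To match the two sides of (i), let $f \in C_0^\infty(X)$ be the Hamiltonian of $\check\zeta$, so $\iota_{\check\zeta}\omega = -df$ by \eqref{eq:eta_phi}. Differentiating \eqref{eq:thm-muX} at $A$ against $a$, using $\dot F_A = d_A a$ and $\dot s_\zeta = \iota_{\check\zeta}a$, and integrating by parts on $X$ (where $d\omega = 0$), one finds: the $s_\zeta$-linear terms reproduce the classical Atiyah--Bott pairing $\int_X s_\zeta \cdot \Lambda d_A a\,\omega^n/n!$, which equals $\omega_\cA(-d_A s_\zeta, a)$; the $f$-dependent contributions, after substituting $\iota_{\check\zeta}\omega = -df$ and using the identity $\alpha \wedge \omega^{n-1}/(n-1)! = \Lambda\alpha \cdot \omega^n/n!$ for $\alpha \in \Omega^2$, reassemble precisely into $\omega_\cA(\iota_{\check\zeta} F_A, a)$. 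For (ii), equivariance is manifest from the naturality of the building blocks: $F_{g\cdot A} = \Ad_g \check{g}^* F_A$, $A(\Ad_g \zeta) = \Ad_g(A\zeta)$, and the Hamiltonian of $\check{g}_* \check\zeta$ is $\check{g}^* f$, so that \eqref{eq:thm-muX} is invariant under the simultaneous change of variables.

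The main obstacle is the reassembly in the preceding step: a naive differentiation of the quadratic piece of \eqref{eq:thm-muX} produces various contractions of $F_A$ with $\check\zeta$ against $\omega^{n-1}$ and $\omega^{n-2}$, and turning these into the clean scalar quantities $\Lambda^2(F_A\wedge F_A)$ and $\Lambda F_A \wedge z$ requires repeated application of $df = \iota_{\check\zeta}\omega$, the Bianchi identity $d_A F_A = 0$, and the Leibniz rule for $d_A$ on $(\ad E)$-valued forms; this bookkeeping is what pins down the coefficients $-\tfrac{1}{4}$ and $-4$ in the coupling term of the moment map.
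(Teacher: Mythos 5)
The paper itself gives no proof of this proposition (it is quoted from \cite{AGG}), and your plan reproduces the computation carried out there: the formula $Y_\zeta(A)=-d_A(A\zeta)+\check\zeta\lrcorner F_A$ for the infinitesimal action, the variation $\dot{(A\zeta)}=\check\zeta\lrcorner\, a$ of the vertical projection, and the reassembly of the $f$-dependent terms by integration by parts together with the vanishing of contractions of $(2n+1)$-forms are exactly the ingredients of the cited proof, so the approach is sound and essentially identical. One small slip to fix: the paper's convention \eqref{eq:eta_phi} reads $df=\eta_f\lrcorner\,\omega$, so for $\check\zeta=\eta_f$ you have $\check\zeta\lrcorner\,\omega=+df$ rather than $-df$; carried through literally, your sign would flip the coupling term relative to \eqref{eq:thm-muX}.
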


\subsection{Sections of a K\"ahler fibration}
\label{sub:Ham-action}

Let $(F,\hat J, \hat \omega)$ be a (possibly non-compact) K\"ahler manifold, with complex structure $\hat J$ and K\"ahler form $\hat \omega$. Following the notation of the previous section, we assume that $G$ acts on $F$ by
Hamiltonian isometries, and fix a $G$-equivariant moment map
$$
\hat \mu \colon F \to \mathfrak{g}^*.
$$
Consider the associated fibre bundle $\cF= E \times_G F$ with fibre $F$. We will denote by $V\cF \subset T\cF$ the vertical bundle of the fibration. 

Let $\cS\defeq\Omega^0(X,\cF)$ the space of $C^\infty$ global sections
of the fibre bundle $\cF$. Using the K\"ahler structure on the fibres
of $\cF$, we endow the infinite-dimensional space $\cS$ with a
K\"ahler structure. Given $\phi \in \cS$, the symplectic form is given
explicitly by
$$
\omega_\cS(\dot \phi_1, \dot \phi_2) = \int_X \hat \omega(\dot \phi_1, \dot \phi_2) \frac{\omega^n}{n!}
$$
where $\dot \phi_i \in T_\phi \cS$ are identified with elements in $\Omega^0(\phi^*V\cF)$. 

An equivariant moment map for the action of the gauge group $\cG$ of $E$ on $(\cS,\omega_\cS)$ was calculated in \cite{MR}. 
Here we are interested in a generalization of this result, where the gauge group is extended by the group of hamiltonian symplectomorphisms $\cH$ of $(X,\omega)$.   The action of the extended  group $\cX$  on $E$ induces an action on $\cS$. This can be seen, for example, by regarding a section of $\cF$ as a $G$-equivariant map $\phi \colon E \to F$. Furthermore, it is easy to see that $\cX$-action on $\cS$ preserves the K\"ahler structure.

To compute the  moment map, let us assume for a moment that the symplectic form $\hat \omega$ is exact (this is, e.g., the situation considered in \cite{AGGP}), that is, there exists $\hat \sigma \in \Omega^1(F)$ such that
$$
d \hat \sigma = \hat \omega.
$$
By averaging over $G$, we can assume that $\hat \sigma$ is invariant under the action of $G$, and it follows that $\omega_\cS = d \sigma_\cS$, with
$$
\sigma_\cS(\dot \phi) = \int_X \hat \sigma(\dot \phi) \frac{\omega^n}{n!}.
$$ 
Then, a $\cX$-equivariant moment map $\mu_\cX \colon\cS\lto(\LieX)^*$ is given by
\begin{equation}\label{eq:mmapsigma}
\langle\mu_\cX,\zeta\rangle = - \sigma_\cS(Y_{\zeta}) = \int_X \hat \sigma(d\phi(\zeta)) \frac{\omega^n}{n!},
\end{equation}
where $Y_\zeta$ denotes the infinitesimal action
$$
Y_{\zeta|\phi} = - d\phi(\zeta)
$$
of $\zeta\in\LieX$ on $\phi \in \cS$, where $\phi$ is regarded as a map $\phi \colon E \to F$ and we use the identification $E\times_G TF \cong \phi^*V\cF$.

We want to obtain an equivalent formula for the moment map \eqref{eq:mmapsigma} which is independent of the choice of $1$-form $\hat \sigma$. For this, choosing a connection $A \colon TE \to VE$ on $E$, we can write
$$
d\phi(\zeta) =  d \phi(A^\perp \zeta) + d\phi (A\zeta) = \check{\zeta} \lrcorner d_A\phi - A \zeta \cdot \phi,
$$
where $\check \zeta := p(\zeta)$, $A \zeta \cdot \phi$ denotes the infinitesimal action of $A \zeta \in \Omega^0(VE)$ along the image of $\phi$ and $d_A \phi = d \phi(A^\perp \cdot) \in \Omega^1(\phi^*V\cF)$ is the covariant derivative induced by $A$. Using that $\hat \sigma$ induces a moment map for the $G$-action on $F$ (that we can assume to be $\hat \mu$) it follows that
$$
\hat \sigma(A \zeta \cdot \phi) = - \langle \phi^*\hat \mu,A\zeta\rangle
$$
where $\phi^*\hat \mu \in \Omega^0(E\times_G \mathfrak{g}^*)$. We use now that $\check \zeta \in \cH$, that is, $\check \zeta \lrcorner \omega = df$ for a smooth function $f \in C^\infty_0(X)$:
\begin{align*}
\int_X \hat \sigma(\check{\zeta} \lrcorner d_A\phi) \frac{\omega^n}{n!} & = \int_X \hat \sigma(d_A\phi)\wedge df \wedge \frac{\omega^{n-1}}{(n-1)!}\\
& = \int_X f d(\hat \sigma(d_A\phi))\wedge \frac{\omega^{n-1}}{(n-1)!}.
\end{align*}
Finally, our desired formula follows from
$$
d(\hat \sigma(d_A\phi)) = \frac{1}{2}\hat \omega(d_A\phi,d_A\phi) + \hat \sigma (F_A \cdot \phi) = \frac{1}{2} \hat \omega(d_A\phi,d_A\phi) - \langle \phi^*\hat \mu, F_A \rangle.
$$
The next result is independent of the existence of the $1$-form $\hat \sigma$ on $F$.

\begin{proposition}
\label{lem:mmapc}
The $\cX$-action on $\cS$ is Hamiltonian, with equivariant moment map
\[
\mu_\cX \colon\cS\lto(\LieX)^*.
\]
For any choice of unitary connection $A$ on $E$, the moment map is given explicitly by
\begin{equation}\label{eq:mmap1}
\begin{split}
\langle\mu(\phi),\zeta\rangle & = \int_X \langle \phi^*\hat \mu,A\zeta\rangle \frac{\omega^n}{n!} + \frac{1}{2} \int_X f (\hat \omega(d_A\phi,d_A\phi) - 2\langle \phi^*\hat \mu, F_A \rangle)\wedge \frac{\omega^{n-1}}{(n-1)!}
\end{split}
\end{equation}
for all $\phi\in\cS$ and $\zeta \in \LieX$ covering $\check \zeta \in \cH$, such that $df = \check \zeta \lrcorner \omega$ 
with $f \in C^\infty_0(X)$.
\end{proposition}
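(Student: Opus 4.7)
My plan is to take \eqref{eq:mmap1} as the definition of $\mu_\cX$ and verify directly the three requirements of an equivariant moment map: (i) independence of the auxiliary connection $A$, (ii) $\cX$-equivariance, and (iii) the moment-map identity $d\langle\mu_\cX(\phi),\zeta\rangle(\dot\phi) = \omega_\cS(Y_\zeta|_\phi,\dot\phi)$ for every $\zeta\in\LieX$ and every $\dot\phi\in T_\phi\cS$. The derivation recalled in the excerpt already proves this in the special case $\hat\omega = d\hat\sigma$ with a $G$-invariant primitive; the additional content is to confirm the formula without that assumption.

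For (i), I would write a second connection as $A'=A+a$ with $a\in\Omega^1(\ad E)$ and use the standard transformation rules
\[
A'\zeta = A\zeta + a(\check\zeta), \quad d_{A'}\phi = d_A\phi - a\cdot\phi, \quad F_{A'} = F_A + d_A a + \tfrac{1}{2}[a,a],
\]
together with the moment-map identity $d\langle\phi^*\hat\mu,\xi\rangle = \iota_{\xi\cdot\phi}\hat\omega$ for $\hat\mu$ and the contraction identity $df = \check\zeta\lrcorner\omega$. Integration by parts on $X$ should then collapse the difference of the two formulas to zero. Equivariance in (ii) is a formal consequence of the intrinsic form of the integrand, the $G$-equivariance of $\hat\mu$, and the compatibility of the $\cX$-actions on $\cA$, $\cS$, and $\cH$.

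The heart of the argument is (iii). I would differentiate each of the three integrands of \eqref{eq:mmap1} in the direction $\dot\phi\in\Omega^0(\phi^*V\cF)$. The variation of $\langle\phi^*\hat\mu,A\zeta\rangle$ yields $\hat\omega(A\zeta\cdot\phi,\dot\phi)$ by the moment-map property of $\hat\mu$, and analogously for $\langle\phi^*\hat\mu,F_A\rangle$. The variation of $\hat\omega(d_A\phi,d_A\phi)$ splits into the contribution $2\hat\omega(d_A\dot\phi,d_A\phi)$ from varying the arguments, plus a contribution $L_{\dot\phi}\hat\omega(d_A\phi,d_A\phi)$ from varying the base-point; by $d\hat\omega=0$ and Cartan's formula this second piece equals $d\iota_{\dot\phi}\hat\omega$ evaluated on $(d_A\phi,d_A\phi)$, an exact-on-$F$ contribution that pulls back to a total-derivative-type term on $X$. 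Assembling all pieces and integrating by parts against $df\wedge\omega^{n-1}/(n-1)!$, the result should collapse to
\[
-\int_X\hat\omega(\check\zeta\lrcorner d_A\phi - A\zeta\cdot\phi,\dot\phi)\frac{\omega^n}{n!} = \omega_\cS(Y_\zeta|_\phi,\dot\phi).
\]

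The main technical obstacle is the bookkeeping in step (iii), in particular tracking how the exact-on-$F$ Lie-derivative piece translates through $d_A\phi$ to an exact form on $X$; the Bianchi-type identity $d_A(d_A\phi) = F_A\cdot\phi$ should absorb the correction coming from the fact that $d_A\phi$ is a covariant rather than a genuine exterior derivative. A more conceptual alternative, when available, is to exploit the fact that the moment-map equation is pointwise in $\phi\in\cS$: if $\hat\omega$ admits a $G$-invariant primitive $\hat\sigma$ on a $G$-invariant neighborhood of $\phi(X)\subset F$ (obtained by averaging a local primitive), the calculation of the excerpt applies verbatim near $\phi$, and the conclusion transfers to \eqref{eq:mmap1} since that formula is manifestly independent of $\hat\sigma$.
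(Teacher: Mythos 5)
Your primary route for step (iii) --- differentiating the integrands of \eqref{eq:mmap1} in the direction $\dot\phi$, using the moment-map property of $\hat\mu$, the closedness of $\hat\omega$, and the identity $d_A(d_A\phi)=F_A\cdot\phi$ to recognize the variation of $\hat\omega(d_A\phi,d_A\phi)-2\langle\phi^*\hat\mu,F_A\rangle$ as the exact form $-2\,d(\hat\omega(d_A\phi,\dot\phi))$, and then integrating by parts against $df\wedge\omega^{n-1}/(n-1)!$ --- is precisely the paper's proof. Your additional checks (independence of $A$, equivariance) and the local-primitive fallback are sound but are left implicit in the paper, which only carries out the variational computation.
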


\begin{proof}
The variation of $\langle \phi^*\hat \mu,A\zeta\rangle$ with respect to $\phi$ is
$$
\langle d\hat \mu (\dot \phi),A\zeta\rangle = \hat \omega (d\phi(A\zeta),\dot\phi).
$$
In addition, we have
\begin{align*}
- \hat \omega (d_A\phi(\check \zeta),\dot\phi)\frac{\omega^n}{n} & = - \hat \omega (d_A\phi ,\dot\phi) \wedge df \wedge \omega^{n-1}\\
& = d(f \hat \omega (d_A\phi ,\dot\phi) \wedge \omega^{n-1}) - f d(\hat \omega (d_A\phi ,\dot\phi)) \wedge \omega^{n-1},
\end{align*}
while the variation of $\hat \omega(d_A\phi,d_A\phi) - 2\langle \phi^*\hat \mu, F_A \rangle$ in the second integral is
$$
\hat \omega(d_A\phi,d_A \dot \phi) + \hat \omega(d_A \dot \phi,d_A \phi) - 2\hat \omega (d\phi(F_A),\dot\phi) = - 2d(\hat \omega (d_A \phi,\dot \phi)).
$$
Formula \eqref{eq:mmap1} follows now integrating by parts.
\end{proof}

\subsection{The Hermitian scalar curvature as a moment map}
\label{section:cscKmmap}


Via its projection into the group of Hamiltonian symplectomorphisms $\cH$ (see \eqref{eq:coupling-term-moment-map-1}), the extended gauge group acts on the space $\cJ$ of compatible almost complex structures on the symplectic manifold $(X,\omega)$. As proved by Donaldson~\cite{D1}, the $\cH$-action on $\cJ$ is Hamiltonian, with moment map given by the Hermitian scalar curvature of the almost K\"ahler manifold. The moment map interpretation of the scalar curvature was first given by Quillen in the case of Riemann surfaces and  Fujiki~\cite{Fj} for the Riemannian scalar curvature of K\"ahler manifolds, and generalized independently in~\cite{D1}.

First we recall the notion of Hermitian scalar curvature of an almost
K\"ahler manifold, we follow closely Donaldson's approach. Fix a compact symplectic manifold $X$ of dimension
$2n$, with symplectic form $\omega$. An almost complex structure $J$
on $X$ is called compatible with $\omega$ if the bilinear form
$g_J(\cdot,\cdot) \defeq \omega(\cdot,J\cdot)$ is a Riemannian metric
on $X$. Any almost complex structure $J$ on $X$ which is compatible
with $\omega$ defines a Hermitian metric on $T^*X$ and there is a
unique unitary connection on $T^*X$ whose (0,1) component is the
operator $\dbar_J\colon \Omega^{1,0}_J\to \Omega^{1,1}_J$ induced by
$J$. The real $2$-form $\rho_J$ is defined as $-i$ times the
curvature of the induced connection on the canonical line bundle $K_X
= \Lambda^n_{\CC}T^{\ast}X$, where $i$ is the imaginary unit
$\sqrt{-1}$. The Hermitian scalar curvature $S_J$ is the real function
on $X$ defined by
\begin{equation}
\label{eq:def-S}
  S_J \omega^{n} = 2n\rho_J \wedge \omega^{n-1}.
\end{equation}
The normalization is chosen so that $S_J$ coincides with the
Riemannian scalar curvature when $J$ is integrable. The space $\cJ$ of
almost complex structures $J$ on $X$ which are compatible with
$\omega$ is an infinite dimensional K\"ahler manifold, with complex
structure $\mathbf{J} \colon T_J\cJ \to T_J\cJ$ and K\"ahler form
$\omega_{\cJ}$ given by
\begin{equation}
\label{eq:SympJ}
\mathbf{J}\Phi \defeq J\Phi \text{ and }
\omega_{\cJ} (\Psi,\Phi) \defeq \frac{1}{2n!}\int_{X}\tr(J\Psi \Phi) \omega^{n},
\end{equation}
for $\Phi$, $\Psi \in T_J\cJ$, respectively. Here we identify $T_J\cJ$
with the space of endomorphisms $\Phi\colon TX \to TX$ such that
$\Phi$ is symmetric with respect to the induced metric
$\omega(\cdot,J\cdot)$ and satisfies $\Phi J = - J \Phi$.

The group $\cH$ of Hamiltonian symplectomorphisms $h\colon X\to X$
acts on $\cJ$ by push-forward, i.e. $h \cdot J \defeq h_{\ast} \circ J
\circ h_{\ast}^{-1}$, preserving the K\"ahler form. As proved by
Donaldson~\cite[Proposition~9]{D1}, the $\cH$-action on $\cJ$ is
Hamiltonian with equivariant moment map $\mu_\cH\colon \cJ\to
(\LieH)^*$ given by
\begin{equation}
\label{eq:scmom}
\langle \mu_\cH(J), \eta_f\rangle = -\int_X f S_J \frac{\omega^n}{n!},
\end{equation}
for $f \in C^{\infty}_0(X)$, identified with an element $\eta_f$
in $\LieH$ by~\eqref{eq:LieH} and~\eqref{eq:eta_phi}. 

As a warm up for our discussion in Section \ref{sec:KYMH}, we note that the $\cH$-invariant subspace $\cJi \subset \cJ$ of integrable almost
complex structures is a complex submanifold (away from its
singularities), and therefore inherits a K\"ahler structure.  Over
$\cJi$, the Hermitian scalar curvature $S_J$ is the Riemannian scalar
curvature of the K\"ahler metric determined by $J$ and $\omega$.
Hence the quotient
\begin{equation}
\label{eq:modulicscK}
\mu_{\cH}^{-1} (0)/\cH,
\end{equation}
where $\mu_\cH$ is now the restriction of the moment map to $\cJi$, is
the moduli space of K\"ahler metrics with fixed K\"ahler form
$\omega$ and constant scalar curvature.  Away from singularities, this
moduli space can thus be constructed as a K\"ahler reduction
(see~\cite{Fj} and references therein for details).

\section{The K\"ahler--Yang--Mills--Higgs equations}
\label{sec:KYMH}

\subsection{The equations as a moment map condition}
\label{sub:KYMH}

Fix a compact symplectic manifold $X$ of dimension $2n$ with symplectic form
$\omega$, a compact Lie group $G$ and a smooth principal $G$-bundle $E$ on
$X$. We fix an Ad-invariant inner product $( \cdot, \cdot ) \colon\glg\otimes\glg\to\RR$ on the Lie algebra $\glg$ of $G$. Let $\cJ$ be the space of almost complex structures on $X$ compatible with $\omega$ and $\cA$ the space of connections on $E$. Consider the space of triples
\begin{equation}\label{eq:spc-triples}
\cJ\times\cA\times\cS,
\end{equation}
endowed with the symplectic structure
\begin{equation}\label{eq:symplecticT}
\omega_\cJ + 4 \alpha \omega_\cA + 4\beta \omega_\cS,
\end{equation}
(for a choice of non-zero real coupling constants $\alpha, \beta$). Similarly as in \cite[Proposition 2.2]{AGG}, the space \eqref{eq:spc-triples} has a formally integrable almost complex structure, which is compatible with \eqref{eq:symplecticT} when $\alpha >0$ and $\beta >0$, thus inducing a K\"ahler structure in this case.

By Proposition \ref{lem:mmapc} combined with Proposition 
\ref{prop:momentmap-X} and \ref{eq:scmom}, the diagonal
action of $\cX$ on this space is Hamiltonian (here the action of $\cX$ on 
$\cJ$ is given by projecting to $\cH$), with equivariant moment
map $\mu_{\alpha,\beta}\colon\cJ\times\cA\times\cS\to(\LieX)^*$ given
by
\begin{equation}\label{eq:mutriples}
\begin{split}
\langle \mu_{\alpha,\beta}(J,A,\phi),\zeta\rangle & = 4 \int_X ( A\zeta,\alpha \Lambda F_A + \beta \phi^*\hat \mu - z )\frac{\omega^n}{n!}\\
&- \int_X  f(S_J - 2\beta \Lambda\hat \omega(d_A\phi,d_A\phi) + \alpha \Lambda^2 (F_A\wedge F_A) + 4 (\Lambda F_A, \beta \phi^*\hat \mu - \alpha z))\frac{\omega^n}{n!},
\end{split}
\end{equation}
for any choice of central element $z$ in the Lie algebra $\mathfrak{g}$.

Suppose now that $X$ has K\"ahler structures with K\"ahler form $\omega$. This means
that the subspace $\cJi \subset \cJ$ of integrable almost complex structures compatible with $\omega$ is not empty. Define
\begin{equation}\label{eq:spaceT}
\mathcal{T}\subset\cJ\times\cA\times\cS
\end{equation}
by the conditions
$$
J\in \cJi, \qquad A\in \cA^{1,1}_J, \qquad \dbar_{J,A} \phi = 0,
$$
where $\dbar_{J,A} \phi$ denotes the $(0,1)$-part of $d_A \phi$ with respect to $J$ and $\cA_J^{1,1} \subset \cA$ consists of connections $A$ with $F_A\in\Omega^{1,1}_J(\ad E)$, or equivalently satisfying 
$$
F_A^{0,2_J}=0.
$$ 
Here $\Omega_J^{p,q}(\ad E)$
denotes the space of $(\ad E)$-valued smooth $(p,q)$-forms with
respect to $J$ and $F_A^{0,2_J}$ is the projection of $F_A$ into
$\Omega_J^{0,2}(\ad E)$. This space is in bijection with the space of
holomorphic structures on the principal $G^c$-bundle $E^c$ over $(X,J)$ (see~\cite{Si}).

By definition, $\mathcal{T}$ is a complex subspace of \eqref{eq:spc-triples} (away from its singularities) preserved by the $\cX$-action, and hence it inherits a Hamiltonian $\cX$-action.

\begin{proposition}\label{prop:momentmap-inttriples}
  The $\cX$-action on $\mathcal{T}$ is Hamiltonian with
  $\cX$-equivariant moment map $\mu_{\alpha,\beta}\colon\cT\to(\LieX)^*$
  given by
\begin{equation}\label{eq:prop-mutriples}
\begin{split}
\langle \mu_{\alpha,\beta}(J,A,\phi),\zeta\rangle & = 4 \int_X ( A\zeta,\alpha \Lambda F_A + \beta \phi^*\hat \mu - z )\frac{\omega^n}{n!}\\
&- \int_X  f(S_J + \beta \Delta_{g}|\phi^* \hat \mu|^2 + \alpha \Lambda^2 (F_A\wedge F_A) - 4\alpha (\Lambda F_A, z))\frac{\omega^n}{n!},
\end{split}
\end{equation}
for all $(J,A,\phi)\in\cT$ and $\zeta\in\LieX$, where $\Delta_g$ denotes the Laplacian of  $g = \omega(\cdot,J\cdot)$. 
\end{proposition}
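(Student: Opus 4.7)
The plan is to exploit that $\cT$ is a complex $\cX$-invariant subspace of the ambient space $\cJ\times\cA\times\cS$: it is cut out by the three conditions $J\in \cJi$, $F_A^{0,2_J}=0$ and $\dbar_{J,A}\phi=0$, each of which carves out a complex subspace (away from singularities) and is preserved by the diagonal $\cX$-action. Thus $\cT$ inherits a Hamiltonian $\cX$-action whose moment map is simply the restriction of the ambient moment map \eqref{eq:mutriples}, and the content of the proposition is the identification of this restriction with \eqref{eq:prop-mutriples}.

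Comparing \eqref{eq:mutriples} and \eqref{eq:prop-mutriples} term by term, the $A\zeta$-contribution and the $f\bigl(S_J + \alpha\Lambda^2(F_A\wedge F_A) - 4\alpha(\Lambda F_A, z)\bigr)$-part of the $f$-integral already agree, so the proposition reduces to establishing the pointwise identity
\[
\Delta_g |\phi^*\hat{\mu}|^2 \;=\; -2\Lambda\hat{\omega}(d_A\phi, d_A\phi) + 4(\Lambda F_A, \phi^*\hat{\mu})
\]
on $\cT$; once this is in place, the fact that $\int_X \Delta_g h\,\omega^n/n! = 0$ for every smooth $h$ converts \eqref{eq:mutriples} into \eqref{eq:prop-mutriples} upon pairing with $f\,\omega^n/n!$.

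To derive this identity I would combine three inputs. The first is the moment map property of $\hat{\mu}$, which, as already used in the proof of Proposition \ref{lem:mmapc}, gives
\[
(d_A(\phi^*\hat{\mu})(v),\,\xi) \;=\; \hat{\omega}(\xi\cdot\phi,\, d_A\phi(v))
\]
for $\xi \in \ad E$ and $v\in TX$. The second is the holomorphicity condition $\dbar_{J,A}\phi = 0$, i.e.\ $d_A\phi\circ J = \hat{J}\circ d_A\phi$, from which it follows that $\hat\omega(d_A\phi,d_A\phi)$ is of type $(1,1)$ and that the $d^c$-type derivatives of $\phi^*\hat{\mu}$ can be rewritten using the fibre metric $\hat g$. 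The third is $F_A\in \Omega^{1,1}_J(\ad E)$, which combined with $d_A^2 = F_A\wedge\cdot$ and a second application of the moment map property yields the curvature term $(\Lambda F_A, \phi^*\hat\mu)$, in direct analogy with the computation of $d(\hat\sigma(d_A\phi)) = \tfrac12\hat\omega(d_A\phi,d_A\phi) - \langle \phi^*\hat\mu, F_A\rangle$ carried out just before Proposition \ref{lem:mmapc}. Expanding $\Lambda dd^c|\phi^*\hat{\mu}|^2$ by the Leibniz rule and invoking the K\"ahler identity relating $\Delta_g$ to $\Lambda dd^c$ then yields the claim.

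The main obstacle is the careful bookkeeping of K\"ahler identities on both the base $(X,\omega,J)$ and the fibre $(F,\hat\omega,\hat J)$, together with the identifications $\mathfrak{g}^*\cong\mathfrak{g}$ used to write $(\Lambda F_A, \phi^*\hat\mu)$, so that the coefficients $-2$ and $4$ emerge correctly with the right signs. Beyond this bookkeeping, no ingredient is needed that is not already present in the derivation leading to \eqref{eq:mmap1}.
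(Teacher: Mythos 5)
Your proposal is correct and follows essentially the same route as the paper: the proof there consists precisely of the pointwise identity $\Delta_g |\phi^* \hat \mu|^2 = 2i \Lambda \dbar \partial |\phi^* \hat \mu|^2 = - 2 \Lambda \hat \omega(d_A\phi,d_A\phi) + 4 \langle \phi^*\hat \mu, \Lambda F_A \rangle$, valid on $\cT$ because $\dbar_{J,A}\phi = 0$, followed by substitution into \eqref{eq:mutriples}. One small caveat: the substitution is pointwise under the integral, so your appeal to $\int_X \Delta_g h\,\omega^n/n! = 0$ is unnecessary (and would not help anyway, since the integrand carries the factor $f$).
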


\begin{proof}
Since $(J,A,\phi) \in \cT$, we have $\dbar_A \phi = 0$, and hence
\begin{align*}
\Delta_g |\phi^* \hat \mu|^2 & = 2i \Lambda \dbar \partial |\phi^* \hat \mu|^2 = - 2 \Lambda \hat \omega(d_A\phi,d_A\phi) + 4 \langle \phi^*\hat \mu, \Lambda F_A \rangle.
\end{align*}
The statement follows now from \eqref{eq:mutriples}.
\end{proof}

The zeros of the moment map $\mu_{\alpha,\beta}$, restricted to the space of integrable pairs $\cT$, correspond to a coupled system of partial differential equations which is the object of our next definition.

\begin{definition}
We say that a triple  $(J,A,\phi)\in\cT$ satisfies  the \emph{K\"ahler--Yang--Mills--Higgs equations} with coupling constants $\alpha,\beta \in \RR$ if
\begin{equation}\label{eq:KYMH2}
\begin{split}
\alpha \Lambda F_A + \beta \phi^*\hat \mu& = z,\\
S_J + \beta \Delta_{g}|\phi^* \hat \mu|^2 + \alpha \Lambda^2 (F_A\wedge F_A) - 4 \alpha (\Lambda F_A, z) & = c,
\end{split}
\end{equation}
where $S_J$ is the scalar curvature of the metric $g_J =
\omega(\cdot,J\cdot)$ on $X$, $z$ is an element in the center of $\mathfrak{g}$ and $c \in \RR$.
\end{definition}

The constant $c \in \RR$ in \eqref{eq:KYMH2} is explicitly defined by the identity
\begin{equation}
\label{eq:constant-c}
c [\omega]^n = 2\pi n c_1(X)\cup [\omega]^{n-1}+ 2\alpha n (n-1) p_1(E)\cup [\omega]^{n-2} - 4n c(E)\cup [\omega]^{n-1}
\end{equation}
where $p_1(E) \defeq [F_A \wedge F_A] \in H^4(X,\RR)$ and $c(E) \in H^2(X,\RR)$ are the Chern--Weil classes associated to the $G$-invariant symmetric forms $(\cdot,
\cdot)$ and $(\cdot,z)$ on $\mathfrak{g}$ respectively, and so $c$ only depends on $[\omega]$ and the topology of $E$. 

The set of solutions of \eqref{eq:KYMH2} is invariant under the action of $\cX$ and we define the moduli space of solutions as the set of all solutions modulo the
action of $\cX$.  We can identify this moduli space with the quotient
\begin{equation}
\label{eq:symplecticreduccX}
\mu_{\alpha,\beta}^{-1}(0)/\cX,
\end{equation}
where $\mu_{\alpha,\beta}$ denotes now the restriction of the moment map to
$\cT$.  Away from singularities, this is a K\"ahler quotient for the
action of $\cX$ on the smooth part of $\cT$ equiped with the K\"ahler form obtained by the restriction of \eqref{eq:symplecticT}.

\subsection{Futaki invariant and geodesic stability}
\label{sub:obstructions}

In this section, we explain briefly some general obstructions to the existence of solutions of the K\"ahler--Yang--Mills--Higgs equations \eqref{eq:KYMH2}, which follow the  general method developed in \cite[\S 3]{AGG}. To describe them, it is helpful to adopt a dual view point, based on complex differential geometry.

We fix a compact complex manifold $X$ of dimension $n$, a K\"ahler class $\Omega \in
H^{1,1}(X)$ and a holomorphic principal bundle $E^c$ over $X$. We assume that the structure group of $E^c$ is a complex reductive Lie group $G^c$, and that the Lie algebra $\mathfrak{g}^c$ of $G^c$ is endowed with an $\Ad$-invariant symmetric bilinear form. Let $(F,\hat J, \hat \omega)$ be a (possibly non-compact) K\"ahler manifold, with complex structure $\hat J$ and K\"ahler form $\hat \omega$. We assume that a maximal compact subgroup $G \subset G^c$ acts on $F$ by Hamiltonian isometries, and fix a $G$-equivariant moment map
$$
\hat \mu \colon F \to \mathfrak{g}^*.
$$
Consider the associated fibre bundle $\cF= E^c \times_{G^c} F$ with fibre $F$, and assume that there exists a holomorphic section
$$
\phi \in H^0(X,\cF).
$$
Then, the K\"ahler--Yang--Mills--Higgs equations on $(X,E^c,\phi)$, for fixed coupling constants $\alpha,\beta \in\RR$, are
\begin{equation}\label{eq:KYMH3}
\begin{split}
\alpha \Lambda_\omega F_H + \beta \phi^*\hat \mu & =z,\\
S_\omega + \beta \Delta_{\omega}|\phi^* \hat \mu|^2 + \alpha \Lambda_\omega^2 (F_H\wedge F_H) - 4 \alpha (\Lambda_\omega F_H, z) & =c,
\end{split}
\end{equation}
where the unknowns are a K\"ahler metric on $X$ with K\"ahler form
$\omega$ in $\Omega$, and a reduction $H \colon X \to E^c/G$ to $G$. In this case, $F_H$ is the curvature of the Chern connection $A_H$ of $H$ on $E^c$, and $S_\omega$ is the scalar curvature of the K\"ahler metric. Note
that 
the constant $c\in\RR$ depends on $\alpha$, $\Omega$ and the topology of $X$ and $E^c$. In the rest of this section, we will assume $\alpha > 0$ and $\beta > 0$ in the definition of \eqref{eq:KYMH3}.

Our first obstruction builds on the general method in \cite[\S 3]{AGG} and classical work of Futaki~\cite{Futaki1}. Consider the complex Lie group $\Aut(X,E^c)$ of automorphisms of $(X,E^c)$ and the complex Lie subgroup fixing the section $\phi$
$$
\Aut(X,E^c,\phi) \subset \Aut(X,E^c).
$$
We define a map
\[
\cF_{\alpha,\beta}\colon\Lie\Aut(X,E^c,\phi)\lto\CC
\]
given by the formula
\begin{equation}\label{eq:alphafutakismooth}
\begin{split}
\langle\cF_{\alpha,\beta},\zeta\rangle & = 4 \int_X (A_H\zeta,\alpha \Lambda_\omega F_H + \beta \phi^*\hat \mu - \alpha z )\frac{\omega^n}{n!}\\
&- \int_X  \varphi(S_\omega + \beta \Delta_{\omega}|\phi^* \hat \mu|^2 + \alpha \Lambda_\omega^2 (F_H\wedge F_H) - 4 (\Lambda_\omega F_H, z)))\frac{\omega^n}{n!},
\end{split}
\end{equation}
for a choice a K\"ahler form $\omega \in \Omega$ and hermitian metric
$H$ on $E$. To explain this formula, we note that $\Lie\Aut(X,E^c)$ is
the space of $G^c$-invariant holomorphic vector fields $\zeta$
on the total space of $E^c$. Any such $\zeta$ covers a real-holomorphic vector field $\check{\zeta}$ on $X$, and decomposes, in terms of the connection $A_H$, as
\[
\zeta=A_H\zeta+A_H^\perp\check{\zeta},
\]
where $A_H \zeta$ and $A_H^\perp \check \zeta$ are its vertical and
horizontal parts. The complex-valued function 
\[
\varphi\defeq\varphi_1+i\varphi_2,
\]
with $\varphi_1,\varphi_2\in C^\infty_0(X,\omega)$, is determined by the unique decomposition
\[
\check{\zeta}=\eta_{\varphi_1}+J\eta_{\varphi_2}+\gamma,
\]
valid precisely because $\check{\zeta}$ is a real-holomorphic vector
field,
where $J$ is the (integrable) almost complex structure of $X$, $\eta_{\varphi_j}$ (for $j=1,2$) is the Hamiltonian vector field of $\varphi_j$, and $\gamma$ is the dual of a
$1$-form that is harmonic with respect to the K\"ahler metric.

This \emph{Futaki character} provides the following obstruction to the existence of
solutions of the K\"ahler--Yang--Mills--Higgs equations equations
(cf.~\cite[Theorem~3.9]{AGG}). Let $B$ be the space of pairs
$(\omega,H)$ consisting of a K\"ahler form $\omega$ in the cohomology
class $\Omega$ and a reduction $H$ of $E^c$ to $G \subset G^c$.

\begin{proposition}\label{prop:futakibis}
The map \eqref{eq:alphafutakismooth} is independent of the choice of element $(\omega,H)$ in $B$. It defines a character of $\Lie \Aut(X,E^c,\phi)$, which vanishes identically if there exists a solution of the K\"ahler--Yang--Mills--Higgs equations \eqref{eq:KYMH3} with K\"ahler class $\Omega$.
\end{proposition}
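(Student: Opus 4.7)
The strategy is to identify $\cF_{\alpha,\beta}$ with the $\CC$-linear extension of the moment map pairing from Proposition \ref{prop:momentmap-inttriples}, evaluated on a complexified infinitesimal lift of $\zeta$ to $\Lie\cX$, and to deduce the three assertions from this identification via standard moment-map techniques, following \cite[\S 3]{AGG}. More precisely, fix $(\omega,H)\in B$; together with the fixed complex structure $J$ on $X$, the Chern connection $A_H$ of $H$ on $E^c$, and the given holomorphic section $\phi$, this determines a triple $m_{\omega,H}=(J,A_H,\phi)\in\cT$. The decompositions $\zeta = A_H\zeta + A_H^\perp\check\zeta$ and $\check\zeta=\eta_{\varphi_1}+J\eta_{\varphi_2}+\gamma$ let me define a complexified lift
\[
\hat\zeta = \zeta_1 + i\zeta_2 \in \Lie\cX \otimes_{\RR} \CC,
\]
with $\zeta_j \in \Lie\cX$ covering $\eta_{\varphi_j} \in \LieH$ and with vertical part extracted from the real and imaginary components of $A_H\zeta$. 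The harmonic summand $\gamma$ does not enter the construction (nor does it appear in \eqref{eq:alphafutakismooth}). A term-by-term comparison of \eqref{eq:alphafutakismooth} and \eqref{eq:prop-mutriples} then yields
\[
\langle\cF_{\alpha,\beta},\zeta\rangle = \langle\mu_{\alpha,\beta}(m_{\omega,H}),\hat\zeta\rangle.
\]

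From this identification, the vanishing assertion is immediate: if $(\omega,H)$ solves \eqref{eq:KYMH3}, then $m_{\omega,H}$ is a zero of $\mu_{\alpha,\beta}$ by Proposition \ref{prop:momentmap-inttriples}, so $\langle\cF_{\alpha,\beta},\zeta\rangle=0$. For the independence of $(\omega,H)\in B$, I would take a smooth path $(\omega_t, H_t)$ in $B$ and analyse the corresponding path $m_t = m_{\omega_t,H_t} \in \cT$. By the standard duality between varying the K\"ahler form in a fixed class and varying the compatible complex structure on a fixed symplectic manifold (and its analogue for reductions of $E^c$), the tangent $\dot m_t$ equals the infinitesimal generator $Y_{J\xi_t}$ of an imaginary element $J\xi_t$, for some $\xi_t \in \Lie\cX$. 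Differentiating,
\[
\tfrac{d}{dt}\langle\mu_{\alpha,\beta}(m_t), \hat\zeta_t\rangle = \omega_\cT(Y_{J\xi_t}, Y_{\hat\zeta_t}) + \langle\mu_{\alpha,\beta}(m_t), \tfrac{d}{dt}\hat\zeta_t\rangle,
\]
where the first term comes from the moment-map property. Because $\zeta$ preserves $(J, E^c, \phi)$, the generator $Y_{\hat\zeta}$ on $\cT$ is tangent to the complexified $\cX^c$-orbit through $m_t$; a computation paralleling \cite[Theorem 3.9]{AGG}, exploiting the interplay of the complex structure on $\cT$ with the decompositions of $\hat\zeta$ and $J\xi_t$, shows that the two terms on the right-hand side cancel.

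The character property then follows abstractly. The group $\Aut(X,E^c,\phi)$ acts on $B$ by pullback, covering the adjoint action on $\Lie\Aut(X,E^c,\phi)$; the manifest equivariance of the integrand in \eqref{eq:alphafutakismooth} gives $\cF_{\alpha,\beta}|_{(g\cdot\omega,\, g\cdot H)}(\zeta) = \cF_{\alpha,\beta}|_{(\omega,H)}(\Ad_g\zeta)$ for $g\in\Aut(X,E^c,\phi)$. Combined with independence of $(\omega,H)$ this reads $\langle\cF_{\alpha,\beta},\Ad_g\zeta\rangle = \langle\cF_{\alpha,\beta},\zeta\rangle$, and differentiating at the identity in a direction $\zeta'$ yields $\langle\cF_{\alpha,\beta},[\zeta',\zeta]\rangle = 0$, which is precisely the vanishing on the derived subalgebra required for $\cF_{\alpha,\beta}$ to be a Lie algebra character into the abelian $\CC$. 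The main obstacle is the cancellation in the independence argument: it demands careful bookkeeping of how the lift $\hat\zeta_t$ depends on $(\omega_t,H_t)$ (both the vertical projection $A_{H_t}\zeta$ and the Hamiltonian decomposition of $\check\zeta$ shift with $t$), and ultimately hinges on $\zeta$ being holomorphic and fixing $\phi$---the same structural feature that underlies the classical definition of the Futaki invariant.
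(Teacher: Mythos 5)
Your overall strategy is the one the paper intends: Proposition~\ref{prop:futakibis} is stated without proof, with a pointer to the ``general method'' of \cite[\S 3]{AGG}, and that method is exactly what you describe --- read \eqref{eq:alphafutakismooth} as the complex-linear extension $\langle\mu_{\alpha,\beta}(J,A_H,\phi),\zeta_1+i\zeta_2\rangle$ of the moment map of Proposition~\ref{prop:momentmap-inttriples}, deduce the vanishing at solutions from $\mu_{\alpha,\beta}=0$ (the first integrand of \eqref{eq:alphafutakismooth} vanishes by the first equation of \eqref{eq:KYMH3}, and the second integral vanishes because the bracketed quantity becomes the constant $c$ while $\varphi_1,\varphi_2\in C^\infty_0(X,\omega)$), and obtain the character property from $\Ad$-equivariance of the formula once independence of the base point is known. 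The vanishing and character steps of your argument are correct and essentially complete.

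The genuine gap is exactly where you locate it: the independence of $(\omega,H)\in B$. Writing $\tfrac{d}{dt}\langle\mu_{\alpha,\beta}(m_t),\hat\zeta_t\rangle=\omega_{\alpha,\beta}(Y_{\hat\zeta_t},\dot m_t)+\langle\mu_{\alpha,\beta}(m_t),\tfrac{d}{dt}\hat\zeta_t\rangle$ is the right starting point, but the assertion that ``the two terms cancel'' \emph{is} the proposition; neither term vanishes separately, and the cancellation requires (a) the identity $Y_{\zeta_1^t}+\mathbf{I}Y_{\zeta_2^t}=0$ on $\cT$ at every $m_t$ (valid because $\zeta$ is holomorphic and fixes $\phi$), which lets you rewrite $\omega_{\alpha,\beta}(Y_{\hat\zeta_t},\mathbf{I}Y_{\xi_t})$, and (b) an explicit computation of $\tfrac{d}{dt}\hat\zeta_t$, i.e.\ of how the vertical projection $A_{H_t}\zeta$ and the potentials $\varphi_j^t$ vary along the path --- none of which you carry out. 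Two further points are glossed over. First, the proposition is stated without assuming $H^1(X,\RR)=0$, so the harmonic summand $\gamma$ of $\check\zeta$ need not vanish; then $\hat\zeta$ is \emph{not} a complexified isotropy element ($Y_{\zeta_1}+\mathbf{I}Y_{\zeta_2}=-Y_{A_H^\perp\gamma}\neq 0$), and the resulting extra term must be shown to integrate to zero using harmonicity of $\gamma$. Second, the identification of $\dot m_t$ with an infinitesimal generator $\mathbf{I}Y_{\xi_t}$ requires the Moser-type correspondence between varying $\omega$ in a fixed class with $J$ fixed and varying $(J,A)$ with $\omega$ fixed; this introduces a time-dependent family of diffeomorphisms that also feeds into $\tfrac{d}{dt}\hat\zeta_t$ and must be tracked in the bookkeeping. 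As written, the central step is a citation of \cite[Theorem~3.9]{AGG} rather than a proof.
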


Further obstructions to the existence of solutions of the
K\"ahler--Yang--Mills equations are intimately related to the geometry
of 
the infinite-dimensional space $B$. It is interesting to notice that this geometry is independent of the choice of holomorphic section $\phi$ on $\cF$. The space $B$ has a structure of
symmetric space~\cite[Theorem~3.6]{AGG}, that is, it has a
torsion-free affine connection $\nabla$, with holonomy group contained
in the extended gauge group (each point of $B$ determines one such
group) and covariantly constant
curvature. 
The partial differential equations that define the geodesics
$(\omega_t,H_t)$ on $B$, with respect to the connection $\nabla$, are
\begin{equation}
\label{eq:geodesicequation}
\begin{split}
dd^c(\ddot \varphi_t - (d\dot\varphi_t,d\dot\varphi_t)_{\omega_t}) &= 0,\\
\ddot H_t - 2J\eta_{\dot \varphi_t} \lrcorner d_{H_t}\dot H_t + iF_{H_t}(\eta_{\dot \varphi_t},J \eta_{\dot\varphi_t}) &= 0,
\end{split}
\end{equation}
where $\eta_{\dot \varphi_t}$ is the Hamiltonian vector field of $\dot
\varphi_t$ with respect to $\omega_t$,
i.e. $d\dot{\varphi}_t=\eta_{\dot \varphi_t} \lrcorner \omega_t$. Assuming existence
of smooth geodesic rays, that is, smooth solutions $(\omega_t,H_t)$
of~\eqref{eq:geodesicequation} defined on an infinite interval $0\leq
t<\infty$, with prescribed boundary condition at $t=0$, one can define
a stability condition for $(X,E^c,\phi)$. Define a $1$-form $\sigma_{\alpha,\beta}$
on $B$ by
\begin{align*}
\notag
\sigma_{\alpha,\beta}(\dot \omega,\dot H) =
& -4i \int_X (H^{-1}\dot{H},\alpha \Lambda_\omega F_H + \beta \phi^*\hat \mu - \alpha z )\frac{\omega^n}{n!}\\
&- \int_X  \dot \varphi(S_\omega + \beta \Delta_{\omega}|\phi^* \hat \mu|^2 + \alpha \Lambda_\omega^2 (F_H\wedge F_H) - 4 (\Lambda_\omega F_H, z)))\frac{\omega^n}{n!},
\end{align*}
where $(\dot \omega, \dot H)$ is a tangent vector to $B$ at
$(\omega,H)$ and $\dot{\omega}=dd^c\dot{\varphi}$ for $\varphi\in
C_{0}^\infty(X,\omega)$.

\begin{definition}\label{def:geodstab}
The triple $(X,E^c,\phi)$ is \emph{geodesically semi-stable} if for every smooth geodesic ray $b_t$ on $B$, the following holds
$$
\lim_{t \to + \infty}\sigma_{\alpha,\beta}(\dot b_t) \geq 0.
$$
\end{definition}

Under the assumption that $B$ is geodesically convex, that is, that
any two points in $B$ can be joined by a smooth geodesic segment,
\emph{geodesic semi-stability} provides an obstruction to the
existence of solutions of \eqref{eq:KYMH3}. 

The proof of the next proposition follows from the fact that the quantity $\sigma_{\alpha,\beta}(\dot b_t)$ is increasing along
geodesics in $B$, with speed controlled by the infinitesimal action on
the space $\cT$ in~\ref{sec:KYMH} (see the proof of \cite[Proposition
3.14]{AGG}).

\begin{proposition}
Assume that $B$ is geodesically convex. If there exists a solution of the K\"ahler--Yang--Mills--Higgs equations in $B$, then $(X,P^c,\phi)$ is geodesically semi-stable. Furthermore, such a solution is unique modulo the action of $\Aut(X,E^c,\phi)$.
\end{proposition}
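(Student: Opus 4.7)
The plan is to adapt the moment-map argument of \cite[Proposition 3.14]{AGG} so as to incorporate the Higgs-field contributions that appear in the moment map of Proposition \ref{prop:momentmap-inttriples}. The underlying picture is that $B$ should be viewed as parametrizing an orbit of a formal complexification of the extended gauge group $\cX$ acting on the space of integrable triples $\cT \subset \cJ \times \cA \times \cS$; under this correspondence the $1$-form $\sigma_{\alpha,\beta}$ on $B$ encodes, at each point, the pairing of $\mu_{\alpha,\beta}$ with the infinitesimal action of the ``imaginary'' direction tangent to the orbit, so that zeros of $\sigma_{\alpha,\beta}$ correspond exactly to solutions of \eqref{eq:KYMH3}.

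The crux of the argument is a monotonicity statement: for every smooth geodesic $b_t = (\omega_t, H_t)$ in $B$ one establishes
\begin{equation*}
\frac{d}{dt}\,\sigma_{\alpha,\beta}(\dot b_t) \;=\; \| Y_{\dot b_t}\|^2 \;\geq\; 0,
\end{equation*}
where $Y_{\dot b_t}$ denotes the infinitesimal action on $\cT$ of the element of the complexified Lie algebra corresponding to $\dot b_t$, and $\|\cdot\|$ is the K\"ahler norm on $\cT$ induced by \eqref{eq:symplecticT}. The verification differentiates the integrands of $\sigma_{\alpha,\beta}$ along the geodesic, using the scalar equation in \eqref{eq:geodesicequation} for the Mabuchi-type piece in the second integral and the bundle equation in \eqref{eq:geodesicequation} for the Yang--Mills--Higgs piece in the first integral; after integration by parts the cross terms regroup, via the same identity $\Delta_g |\phi^*\hat\mu|^2 = -2\Lambda\hat\omega(d_A\phi,d_A\phi) + 4\langle \phi^*\hat\mu,\Lambda F_A\rangle$ used in the proof of Proposition \ref{prop:momentmap-inttriples}, into a manifest sum of squares.

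Granted this monotonicity, both conclusions are formal. For the obstruction, if $b_0 \in B$ solves \eqref{eq:KYMH3} then the integrands defining $\sigma_{\alpha,\beta}$ vanish at $b_0$, so $\sigma_{\alpha,\beta}(\dot b_0)=0$ for any geodesic ray starting at $b_0$; monotonicity then gives $\lim_{t \to +\infty} \sigma_{\alpha,\beta}(\dot b_t) \geq 0$, which is precisely geodesic semi-stability. For uniqueness, two solutions $b_0, b_1 \in B$ are joined by a geodesic segment $b_t$ via geodesic convexity; the function $\sigma_{\alpha,\beta}(\dot b_t)$ vanishes at both endpoints and is non-decreasing, which forces $Y_{\dot b_t} \equiv 0$ along the segment. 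This means the one-parameter family of complex gauge transformations realizing $b_t$ stabilizes the underlying triple $(J,A,\phi)$, hence lies in $\Aut(X,E^c,\phi)$.

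The principal obstacle is the sum-of-squares identity above: in contrast to the Higgs-free setting of \cite[\S 3]{AGG}, one must carefully track the new coupling terms $\beta\phi^*\hat\mu$ and $\beta\Delta_\omega|\phi^*\hat\mu|^2$ through the calculation and verify that the geodesic equation \eqref{eq:geodesicequation}, which was derived from the symmetric-space structure of $B$ independently of the section $\phi$, still yields the clean identity on the nose. Once this computation is in hand, the remainder of the argument is the routine Kempf--Ness/convexity paradigm.
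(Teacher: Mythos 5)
Your overall strategy coincides with the paper's: the proof given there consists precisely of the observation that $\sigma_{\alpha,\beta}(\dot b_t)$ is non-decreasing along geodesics, with derivative equal to the squared norm of the infinitesimal action on $\cT$, delegating the computation to the proof of \cite[Proposition~3.14]{AGG}. Your uniqueness argument --- joining two solutions by a geodesic segment, noting that $\sigma_{\alpha,\beta}(\dot b_t)$ vanishes at both endpoints and is non-decreasing, hence identically zero, hence $Y_{\dot b_t}\equiv 0$ so that the segment is generated by $\Lie\Aut(X,E^c,\phi)$ --- is exactly the intended one.

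There is, however, a gap in your treatment of semi-stability. Definition~\ref{def:geodstab} requires $\lim_{t\to+\infty}\sigma_{\alpha,\beta}(\dot b_t)\geq 0$ for \emph{every} smooth geodesic ray in $B$, whereas your argument only treats rays emanating from the solution $b_*$, where $\sigma_{\alpha,\beta}(\dot b_0)=0$ can be read off from the vanishing of the moment map. For a ray starting elsewhere, $\sigma_{\alpha,\beta}(\dot b_0)$ may well be negative, and monotonicity alone does not force the limit to be non-negative. The missing ingredient is to integrate $\sigma_{\alpha,\beta}$ to a functional $\cM$ on $B$ (using that $\sigma_{\alpha,\beta}$ is closed, which follows from the moment-map picture) which is convex along geodesics; geodesic convexity of $B$ then shows that the solution $b_*$ is a global minimum of $\cM$, so $\cM$ is bounded below along any ray, and a non-decreasing derivative whose primitive is bounded below must have non-negative limit. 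Note that this is the only place in the semi-stability half where the hypothesis of geodesic convexity is actually needed; in your write-up it enters only the uniqueness half.
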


The space $B$ defines a geodesic submersion over the symmetric space
of K\"ahler metrics on the class $\Omega$ \cite{D6,Mab1,Se}. In
particular, this implies that in general one cannot expect existence
of smooth geodesic segments on $B$ with arbitrary boundary conditions.

\subsection{Matsushima--Lichnerowicz for the K\"ahler--Yang--Mills--Higgs equations}

In this section we introduce a new obstruction to the existence of solutions of the K\"ahler--Yang--Mills--Higgs equations. This is based on an analogue of Matsushima--Lichnerowicz Theorem \cite{Lichnerowicz,Matsushima} for \eqref{eq:KYMH3}, which relates the existence of a solution on $(X,E^c,\phi)$ with the reductivity of $\Lie \Aut(X,E^c,\phi)$. Our proof relies on the moment-map interpretation of the equations \eqref{eq:KYMH3}, following closely Donaldson--Wang's abstract proof~\cite{D1,W} of the Matsushima--Lichnerowicz Theorem.

For simplicity, we will assume that $X$ has vanishing first Betti number, even though we expect that our analysis goes through with minor modifications to the general case.

\begin{theorem}\label{th:Matsushima-type}
Assume $H^1(X,\RR) = 0$. If $(X,E^c,\phi)$ admits a solution of the K\"ahler--Yang--Mills--Higgs equations \eqref{eq:KYMH3} with $\alpha > 0$ and $\beta > 0$, then the Lie algebra of $\Aut(X,E^c,\phi)$ is reductive.
\end{theorem}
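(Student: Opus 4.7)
My plan is to follow the abstract moment-map argument of Donaldson--Wang~\cite{D1,W}, adapted to the setting of \S\ref{sub:KYMH}. A solution $(\omega,H)$ of \eqref{eq:KYMH3} defines a point $p=(J,A,\phi)\in\cT$ (with $A=A_H$ the Chern connection of $H$) at which the moment map of Proposition~\ref{prop:momentmap-inttriples} vanishes. The strategy is to identify $\Lie\Aut(X,E^c,\phi)$ with the complexification of the real $\cX$-stabilizer of $p$ and then invoke the reductivity of the complexification of a Lie algebra equipped with an $\Ad$-invariant inner product.

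For the main step, each $\zeta\in\Lie\Aut(X,E^c,\phi)$ decomposes using $A_H$ as $\zeta=A_H\zeta+A_H^\perp\check\zeta$ with $\check\zeta$ a real-holomorphic vector field on $X$. Since $H^1(X,\RR)=0$, the harmonic term in the decomposition $\check\zeta=\eta_{\varphi_1}+J\eta_{\varphi_2}+\gamma$ of \S\ref{sub:obstructions} vanishes. Writing $A_H\zeta=a_1+ia_2$ with $a_j\in\Omega^0(\ad E)$, the assignment $\zeta\mapsto(\xi_1,\xi_2)$ with $\xi_j=a_j+A^\perp\eta_{\varphi_j}\in\Lie\cX$ gives a complex-linear map
\[
\Phi\colon\Lie\Aut(X,E^c,\phi)\lto\Lie\cX\otimes\CC.
\]
The heart of this step is to check that the holomorphicity of $\zeta$ on $E^c$ together with $\zeta\cdot\phi=0$ is equivalent to the single infinitesimal identity
\[
Y_{\xi_1}|_p+\mathbf{J}\,Y_{\xi_2}|_p=0
\]
in $T_p\cT$, where $\mathbf{J}$ is the product almost complex structure on $\cJ\times\cA\times\cS$. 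Granted this, $\Phi$ is a complex-linear isomorphism between $\Lie\Aut(X,E^c,\phi)$ and the complexified stabilizer $\{(\xi_1,\xi_2)\in\Lie\cX\otimes\CC:Y_{\xi_1}|_p+\mathbf{J}Y_{\xi_2}|_p=0\}$.

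For the second step, $\alpha,\beta>0$ makes $g_\cT=\omega_\cT(\cdot,\mathbf{J}\cdot)$ a positive-definite Riemannian metric on the smooth locus of $\cT$, where $\omega_\cT$ denotes~\eqref{eq:symplecticT}. If $(\xi_1,\xi_2)$ lies in the complexified stabilizer, then $Y_{\xi_1}|_p=-\mathbf{J}\,Y_{\xi_2}|_p$, whence
\[
\|Y_{\xi_2}|_p\|^2_{g_\cT}=\omega_\cT(Y_{\xi_2},\mathbf{J}Y_{\xi_2})|_p=-\omega_\cT(Y_{\xi_2},Y_{\xi_1})|_p=-d\mu_{\alpha,\beta}^{\xi_2}(Y_{\xi_1})|_p=0,
\]
by $\cX$-equivariance of $\mu_{\alpha,\beta}$ combined with $\mu_{\alpha,\beta}(p)=0$. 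Hence $Y_{\xi_2}|_p=Y_{\xi_1}|_p=0$, so both $\xi_j$ lie in the real stabilizer $(\Lie\cX)_p$, and the complexified stabilizer coincides with $(\Lie\cX)_p\otimes\CC$. Combined with the first step, $\Lie\Aut(X,E^c,\phi)\cong(\Lie\cX)_p\otimes\CC$. The inner product \eqref{eq:Lie-algebra-inner-product} together with the Riemannian metric on $X$ induce an $\Ad$-invariant inner product on $(\Lie\cX)_p$, which is finite-dimensional since $\Lie\Aut(X,E^c,\phi)$ is. It is therefore the Lie algebra of a compact group, and its complexification is reductive, proving the theorem.

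The hardest part is expected to be the first step, specifically the $\cS$-component of the infinitesimal dictionary. Matching the condition $\zeta\cdot\phi=0$ on $E^c$ with the vanishing of the $\cS$-component of $Y_{\xi_1}+\mathbf{J}\,Y_{\xi_2}$ at $\phi\in\cS=\Omega^0(X,\cF)$ requires a careful computation in $\phi^*V\cF$ in which the integrability condition $\dbar_{J,A}\phi=0$ plays an essential role to recognize $-d\phi(\xi_1)-\mathbf{J}\cdot d\phi(\xi_2)$ as the infinitesimal variation of $\phi$ under the complexified flow of $\zeta$; the $\cJ$- and $\cA$-components are already handled in \cite[\S3]{AGG}, so this is the genuinely new ingredient beyond the K\"ahler--Yang--Mills case.
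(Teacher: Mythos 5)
Your proposal is correct and follows essentially the same route as the paper: the same connection-based decomposition $y=\zeta_1+I\zeta_2$ using $H^1(X,\RR)=0$ (the paper's Lemma~\ref{lem:Lich-type}), the same equivariance-plus-$\mu(p)=0$ computation forcing $Y_{\zeta_1|p}=Y_{\zeta_2|p}=0$ (Lemma~\ref{lem:y*}), and the same conclusion that $\Lie\Aut(X,E^c,\phi)$ is the complexification of the stabilizer algebra, hence reductive. The only cosmetic difference is that you deduce compact type of $(\Lie\cX)_p$ from an $\Ad$-invariant inner product, whereas the paper realizes $\cX_p$ directly as a closed subgroup of an isometry group; both are standard and valid.
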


To prove our theorem we need some preliminary results. Let $\omega$ be a K\"ahler form on $X$ and a reduction $H$ of $E^c$ to $G \subset G^c$. The following lemma gives a convenient formula for the elements of $\Lie\Aut(X,E^c,\phi)$ 
adapted to the pair $(\omega,H)$, and is reminiscent of the
Hodge-theoretic description of holomorphic vector fields on compact
K\"ahler manifolds (see, e.g.,~\cite[Ch. 2]{Gauduchon}). As in~\eqref{sec:extended-group}, $\LieX$ will denote the Lie algebra of the extended gauge group associated to the symplectic structure $\omega$ and the reduction $E_H$. For the proof, we will not assume that $(\omega,H)$ is a solution of \eqref{eq:KYMH3}. We denote by $I$ the almost complex structure on the total space of $E^c$.

\begin{lemma}\label{lem:Lich-type}
Assume $H^1(X,\RR) = 0$. Then, for any $y \in\Lie\Aut(X,E^c)$ there exist $\zeta_1,\zeta_2\in\LieX$ such that
\begin{equation}\label{eq:ysplit}
y=\zeta_1+I\zeta_2.
\end{equation}
\end{lemma}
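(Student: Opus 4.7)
The plan is to combine Hodge theory on $X$ with the splittings afforded by the Chern connection $A_H$. Since $y\in\Lie\Aut(X,E^c)$ is $G^c$-invariant and holomorphic on the total space of $E^c$, it covers a real holomorphic vector field $\check{y}$ on $X$. On a compact K\"ahler manifold such a vector field decomposes uniquely as
\[
\check{y}=\eta_{\varphi_1}+J\eta_{\varphi_2}+\gamma,
\]
with $\varphi_1,\varphi_2\in C^\infty_0(X)$ and $\gamma$ the metric dual of a harmonic $1$-form. The hypothesis $H^1(X,\RR)=0$ kills the harmonic part, leaving $\check{y}=\eta_{\varphi_1}+J\eta_{\varphi_2}$, so $\check{y}$ is the projection of an element of $\LieH$ to which Hamiltonian generators can be lifted.

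Next, I use the Chern connection $A_H$ of $H$ on $E^c$ to split $y$ into horizontal and vertical parts along $E_H$,
\[
y=A_H^\perp\check{y}+A_H y,
\]
where $A_Hy$ is a section of $\ad E^c=\ad E_H\otimes_\RR\CC$. Using $\glgc=\glg\oplus i\glg$, I write $A_Hy=s_1+i\,s_2$ with $s_1,s_2\in\Omega^0(\ad E_H)$, and I set
\[
\zeta_j\defeq A_H^\perp\eta_{\varphi_j}+s_j,\qquad j=1,2.
\]
Because $A_H$ restricts to a $G$-connection on $E_H$, the horizontal lifts $A_H^\perp\eta_{\varphi_j}$ are tangent to $E_H$, and the vertical terms $s_j$ live in $T E_H$ as well. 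Hence each $\zeta_j$ is a $G$-invariant vector field on $E_H$ covering the Hamiltonian vector field $\eta_{\varphi_j}\in\LieH$, i.e.\ $\zeta_j\in\LieX$.

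The identity $y=\zeta_1+I\zeta_2$ is then a direct computation based on two compatibilities of $I$ with the Chern connection: (i) $I$ preserves the horizontal distribution and satisfies $I\,A_H^\perp w=A_H^\perp(Jw)$ for $w\in TX$, because for a holomorphic principal bundle the Chern connection has complex horizontal distribution; (ii) on vertical vectors at points of $E_H$, $I$ acts as multiplication by $i$ under the identification $VE^c\cong E_H\times_G\glgc$. Combining (i) and (ii) gives
\[
\zeta_1+I\zeta_2=A_H^\perp(\eta_{\varphi_1}+J\eta_{\varphi_2})+(s_1+i\,s_2)=A_H^\perp\check{y}+A_Hy=y.
\]

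The main obstacle is pinpointing where the hypothesis $H^1(X,\RR)=0$ is indispensable. The base decomposition of $\check{y}$ and the bundle decomposition via $A_H$ are formal once one invokes the standard properties of the Chern connection, but a harmonic residue $\gamma$ cannot be written as $\eta_{\varphi_1}+J\eta_{\varphi_2}$ and so does not arise as the projection of any $\zeta\in\LieX$ to $\LieH$. Any extension of the lemma to the case $H^1(X,\RR)\neq 0$ would therefore require enlarging $\LieX$ to accommodate the harmonic vector fields on $X$, which is why the hypothesis is imposed here.
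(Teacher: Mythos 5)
Your argument is correct and follows essentially the same route as the paper's proof: split $y$ into vertical and horizontal parts via the Chern connection $A_H$, decompose the vertical part using $\glgc=\glg\oplus i\glg$, decompose $\check{y}$ into Hamiltonian parts using $H^1(X,\RR)=0$, and assemble $\zeta_j$ from the matching pieces. Your explicit verification of $y=\zeta_1+I\zeta_2$ via the $I$-invariance of the horizontal distribution and the action of $I$ on vertical vectors is a detail the paper leaves implicit, but the construction is identical.
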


\begin{proof}
Let $A$ be the Chern connection of $H$ on $E^c$. We will use the
decomposition of 
\begin{equation}\label{eq:holvectfield-Vert+Horiz}
y = Ay+A^\perp\check{y}  
\end{equation}
into its vertical and horizontal components $Ay$, $A^\perp\check{y}$,
where $\check{y}$ is the unique holomorphic vector field on $X$ covered by $y$. 
Using the anti-holomorphic involution on the Lie algebra $\mathfrak{g}^c$ determined by $G \subset G^c$, we decompose
$$
Ay = \xi_1 + i \xi_2,
$$
for $\xi_j \in \Omega^0(\ad E_H)$. Furthermore, as $H^1(X,\RR) = 0$, we have
\[
\check{y}=\check{y}_1+J\check{y}_2,
\]
where $\check{y}_1$ and $\check{y}_2$ are Hamiltonian vector fields for the symplectic form $\omega$. Hence, defining the vector fields
\[
\zeta_j = \xi_j + A^\perp \check{y}_j, 
\]
for $j=1,2$, we obtain the result.
\end{proof}

We will now apply Lemma~\ref{lem:Lich-type} to the
elements of $\Lie\Aut(X,E^c,\phi)\subset\Lie\Aut(X,E^c)$.

\begin{lemma}\label{lem:y*}
Assume $H^1(X,\RR) = 0$ and that $(X,E^c,\phi)$ admits a solution $(\omega,h)$ of the K\"ahler--Yang--Mills--Higgs equations with $\alpha > 0$ and $\beta > 0$. Then, for any $y \in \Lie\Aut(X,E^c,\phi)$, the vector fields $\zeta_1,\zeta_2$ in \eqref{eq:ysplit} satisfy $\zeta_1,\zeta_2 \in \Lie\Aut(X,E^c,\phi)$.
\end{lemma}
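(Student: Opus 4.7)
The plan is to adapt the Donaldson--Wang moment-map proof of Matsushima--Lichnerowicz (cf.\ \cite{D1,W}) to our setting, using the moment-map interpretation of the K\"ahler--Yang--Mills--Higgs equations supplied by Proposition~\ref{prop:momentmap-inttriples}. Let $p = (J, A_H, \phi) \in \cT$ be the point corresponding to the solution $(\omega, H)$, so $\mu_{\alpha,\beta}(p) = 0$, and let $y = \zeta_1 + I\zeta_2$ be the decomposition of Lemma~\ref{lem:Lich-type}. I view this decomposition as realising $\Lie\Aut(X,E^c)$ as a subspace of the complexification $\LieX \oplus i\LieX$, under which the action of $y$ on $\cT$ is given by the complexified extended gauge action
\[
Y_y|_p = Y_{\zeta_1}|_p + \mathbf{I}\, Y_{\zeta_2}|_p,
\]
where $\mathbf{I}$ is the K\"ahler complex structure on $\cT$ from Section~\ref{sub:KYMH}.

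The first key step is to check that $Y_y|_p = 0$ in $T_p\cT$ component by component. In the $\cJ$-factor this holds because $y\in\Lie\Aut(X,E^c)$ forces its base vector field $\check y$ to preserve $J$. In the $\cS$-factor it follows from $d\phi(y) = 0$, which is precisely what defines $\Lie\Aut(X,E^c,\phi)$. In the $\cA$-factor, $y$ preserves the holomorphic structure $A_H^{0,1}$ on $E^c$, so the $(0,1)$-projection of $Y_y^{(A)}$ vanishes; since $Y_y^{(A)}\in\Omega^1(\ad E)$ is real, its $(1,0)$-part is the complex conjugate of the $(0,1)$-part and so vanishes as well.

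With $Y_y|_p = 0$ at hand, the Donaldson--Wang trick extracts $Y_{\zeta_j}|_p = 0$ for each $j$. From $Y_{\zeta_1}|_p = -\mathbf{I}\, Y_{\zeta_2}|_p$, the moment-map identity combined with $\cX$-equivariance of $\mu_{\alpha,\beta}$ and the hypothesis $\mu_{\alpha,\beta}(p) = 0$ yields
\[
\Omega(Y_{\zeta_1}, Y_{\zeta_2})|_p \;=\; d\langle \mu_{\alpha,\beta},\zeta_1\rangle(Y_{\zeta_2})|_p \;=\; \langle \mu_{\alpha,\beta}(p),\,[\zeta_1,\zeta_2]\rangle \;=\; 0,
\]
where $\Omega$ is the symplectic form in \eqref{eq:symplecticT}. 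Positive-definiteness of the K\"ahler metric $g = \Omega(\cdot,\mathbf{I}\cdot)$ on $\cT$, which holds precisely because $\alpha,\beta>0$, then gives
\[
\|Y_{\zeta_2}|_p\|_g^2 \;=\; \Omega(Y_{\zeta_2},\mathbf{I}\, Y_{\zeta_2})|_p \;=\; \Omega(Y_{\zeta_1},Y_{\zeta_2})|_p \;=\; 0,
\]
forcing $Y_{\zeta_2}|_p = 0$, and hence also $Y_{\zeta_1}|_p = 0$.

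The final step translates the vanishing $Y_{\zeta_j}|_p = 0$ back into complex geometry on $E^c$. Vanishing of the $\cJ$-component gives $\check\zeta_j$ real-holomorphic on $(X,J)$; vanishing of the $\cA$-component gives $\mathcal{L}_{\zeta_j} A_H = 0$, so $\zeta_j$ preserves the horizontal distribution of the $(1,1)$-connection $A_H$; together these imply $\mathcal{L}_{\zeta_j} I = 0$, i.e., $\zeta_j$ extends to a real-holomorphic vector field on $E^c$. Vanishing of the $\cS$-component gives $d\phi(\zeta_j) = 0$. Combined, $\zeta_j \in \Lie\Aut(X,E^c,\phi)$. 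The main obstacle will be the identification in the first step, in particular the claim that the complexified extended-gauge action on the $\cA$-factor agrees with the natural biholomorphic action of $y$ on $(E^c, A_H^{0,1})$ modulo a complex-gauge correction keeping $H$ fixed; once this geometric content of the Lemma~\ref{lem:Lich-type} embedding is in place, the rest reduces to bookkeeping for the three-factor symplectic form \eqref{eq:symplecticT} and the standard Donaldson--Wang computation.
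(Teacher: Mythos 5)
Your proposal is correct and follows essentially the same route as the paper: decompose $y=\zeta_1+I\zeta_2$ via Lemma~\ref{lem:Lich-type}, use integrability of $I$ to get $0=Y_{y|t}=Y_{\zeta_1|t}+\mathbf{I}Y_{\zeta_2|t}$, kill the cross term $\omega_{\alpha,\beta}(Y_{\zeta_1|t},Y_{\zeta_2|t})$ by equivariance of the moment map together with $\mu_{\alpha,\beta}(t)=0$, and conclude from positivity of $g_{\alpha,\beta}$ (which requires $\alpha,\beta>0$). Your write-up is in fact slightly more careful than the paper's at the two ends of the argument (checking $Y_{y|t}=0$ factor by factor, and translating $Y_{\zeta_j|t}=0$ back into $\zeta_j\in\Lie\Aut(X,E^c,\phi)$), but the core computation is identical.
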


\begin{proof}
By the results of Section \ref{sub:KYMH}, if $(\omega,h)$ is a solution of
 \eqref{eq:KYMH3}, then the triple $t\defeq(J,A,\phi)$ is a zero of a moment map
\[
\mu_{\alpha,\beta} \colon \cT \to \LieX^*
\]
for the action of $\cX$ on the space of `integrable triples' $\cT$ defined in \eqref{eq:spaceT}. Recall that $\cT$ is endowed with a
(formally) integrable almost complex structure $\mathbf{I}$, and
K\"ahler metric
\[
g_{\alpha,\beta} = \omega_{\alpha,\beta}(\cdot,\mathbf{I}\cdot)
\]
(as we are assuming $\alpha>0$ and $\beta >0$), where the compatible symplectic structure
$\omega_{\alpha,\beta}$ is as in \eqref{eq:symplecticT}. Given $y \in \Lie\Aut(X,E^c)$, we denote by $Y_{y|t}$ the infinitesimal action of $y$ on
$t)$. Then the proof reduces to show that
$Y_{\zeta_1|t}=Y_{\zeta_2|t}=0$ for $y \in \Lie\Aut(X,E^c,\phi)$, where $\zeta_1,\zeta_2$ as in \eqref{eq:ysplit}. To prove this, we note that
since the almost complex structure $I$ on $E^c$ is integrable, we have (see~\cite[Section 3.2]{AGG})
\[
0 = Y_{y|t} = Y_{\zeta_1 + I \zeta_2|t} = Y_{\zeta_1|t} + \mathbf{I}Y_{\zeta_2|t}.
\]
Considering now the norm $\|\cdot\|$ on $T_t\cT$ induced by the
metric $g_{\alpha,\beta}$, we obtain
\[
0=\|Y_{y|t}\|^2 = \|Y_{\zeta_1|t}\|^2 + \|Y_{\zeta_2|t}\|^2 - 2 \omega_{\alpha,\beta}(Y_{\zeta_1|t},Y_{\zeta_1|t}).
\]
Now, $\mu_{\alpha,\beta}(t) = 0$ and the moment map $\mu_\alpha$ is
equivariant, so
\[
\omega_\alpha(Y_{\zeta_1|t},Y_{\zeta_1|t})
=d\langle\mu_\alpha,\zeta_1\rangle(Y_{\zeta_2|t})
=\langle\mu_\alpha(t),[\zeta_1,\zeta_2]\rangle=0,
\]
and therefore
\[
\|Y_{\zeta_1|t}\|^2 = \|Y_{\zeta_2|t}\|^2=0,
\]
so we conclude that $\zeta_1,\zeta_2\in \Lie\Aut(X,E^c,\phi)$, as required.
\end{proof}

Theorem \ref{th:Matsushima-type} is now a formal consequence of
Lemma~\ref{lem:y*}.

\begin{proof}[Proof of Theorem~\ref{th:Matsushima-type}]
Considering the $\cX$-action on $\cT$, we note that the Lie algebra
$\mathfrak{k}=\LieX_t$ of the isotropy group $\cX_t$ of the triple
$t=(J,A,\phi)\in\cT$ satisfies
\[
\mathfrak{k} \oplus I \mathfrak{k} \subset \Lie\Aut(X,E^c,\phi).
\] 
Furthermore, the Lie group $\cX_t$ is compact, because it can be
regarded as a closed subgroup of the isometry group of a Riemannian
metric on the total space of $E_H$ (see~\cite[Section 2.3]{AGG}). Now, Lemma~\ref{lem:y*} implies that
\[
\Lie\Aut(X,E^c,\phi)=\mathfrak{k}\oplus I\mathfrak{k},
\]
so $\Lie\Aut(X,E^c,\phi)$ is the complexification of the Lie algebra
$\mathfrak{k}$ of a compact Lie group, and hence a reductive complex Lie algebra.
\end{proof}

\section{Gravitating vortices and dimensional reduction}
\label{sec:quivers}

\newcommand{\glk}{\mathfrak{k}}
\newcommand{\gll}{\mathfrak{l}}
\newcommand{\glu}{\mathfrak{u}}
\newcommand{\glr}{\mathfrak{r}}
\newcommand{\gls}{\mathfrak{s}}
\newcommand{\glt}{\mathfrak{t}}
\newcommand{\balpha}{{\bm{\alpha}}}
\newcommand{\bepsilon}{{\bm{\varepsilon}}}


\subsection{Gravitating quiver vortex equations}
\label{sub:quivers.Equations}

Here we consider in more detail the K\"ahler--Yang--Mills--Higgs
equations when the Higgs field is a section of a special type of
vector bundles, defining a quiver bundle.
To fix notation, we recall the notions of quiver and quiver bundle
(see, e.g.,~\cite{AG1} for details). A \emph{quiver} $Q$ is a pair of
sets $(Q_0,Q_1)$, together with two maps $t,h\colon Q_1\to Q_0$. The
elements of $Q_0$ and $Q_1$ are called the vertices and arrows of the
quiver, respectively. An arrow $a\in Q_1$ is represented pictorially
as $a\colon i\to j$, where $i=ta$ and $j=ha$ are called the tail and
the head of $a$. Suppose for simplicity that the quiver is finite,
that is, both $Q_0$ and $Q_1$ are finite sets (this condition will be
weakened in Section~\ref{sub:quivers.DimRed}). Fix a compact complex
manifold $X$ of dimension $n$. A \emph{holomorphic $Q$-bundle over
  $X$} is a pair $(E,\phi)$ consisting of a set $E$ of holomorphic
vector bundles $E_i$ on $X$, indexed by the vertices $i\in Q_0$, and a
set $\phi$ of holomorphic vector-bundle homomorphisms $\phi_a\colon
E_{ta}\to E_{ha}$, indexed by the arrows $a\in Q_1$. Note that it is
often useful to consider a category of \emph{twisted} quiver bundles
(see~\cite{AG1}), but they will not be needed for the application
given in Corollary~\ref{thm:quivers.Existence-KYM}. 

A \emph{Hermitian metric} on $(E,\phi)$ is a set $H$ of 
Hermitian metrics $H_i$ on $E_i$, indexed by the vertices $i\in
Q_0$. Any such Hermitian metric determines a $C^\infty$ adjoint
vector-bundle morphism $\phi_a^{*H_a}\colon E_{ha}\to E_{ta}$ of
$\phi_a\colon E_{ta}\to E_{ha}$ with respect to the Hermitian metrics
$H_{ta}$ and $H_{ha}$, for each $a\in Q_1$, and we can construct a
($H$-self-adjoint) `commutator'
\[
[\phi,\phi^{*H}]=\bigoplus_{i\in Q_0}[\phi,\phi^{*H}]_i
\colon\bigoplus_{i\in Q_0} E_i\lto\bigoplus_{i\in Q_0} E_i,
\]
with components 
\[
[\phi,\phi^{*H}]_i
:=\sum_{a\in h^{-1}(i)}\phi_a\circ\phi_a^{*H_a}-\sum_{a\in t^{-1}(i)}\phi_a^{*H_a}\circ\phi_a
\colon E_i\lto E_i,
\]
for all $i\in Q_0$. In the following, $\RR_{>0}\subset \RR$ is the set
of positive real numbers, and for any two sets $I$ and $S$, $S^I$ is
the set of maps $\sigma\colon I\to S,\, i\mapsto \sigma_i$; to avoid
confusion with the symbols used to denote quiver vertices,
$\imag=\sqrt{-1}$ is the imaginary unit.

\begin{definition}
\label{def:quivers.Equations}
Fix constants $\rho\in\RR_{>0}$, $\sigma\in\RR_{>0}^{Q_0}$ and $\tau\in\RR^{Q_0}$.
The \emph{gravitating quiver $(\rho,\sigma,\tau)$-vortex
  equations} for a pair $(\omega,H)$, consisting of a K\"ahler metric
$\omega$ on the complex manifold $X$ and a Hermitian metric $H$ on a holomorphic $Q$-bundle $(E,\phi)$, are
\begin{subequations}\label{eq:quivers.Equations.1}
\begin{align}\label{eq:quivers.Equations.1a}
\sigma_i\imag\Lambda_\omega F_{H_i} + [\phi,\phi^{*H}]_i
=\tau_i\Id_{E_i},&\\
\label{eq:quivers.Equations.1b}
S_\omega 
-\rho\sum_{i\in Q_0}\sigma_i\Lambda_\omega^2\Tr F_{H_i}^2
+2\rho\sum_{a\in Q_1}\left(\Delta_\omega+2\(\frac{\tau_{ha}}{\sigma_{ha}}-\frac{\tau_{ta}}{\sigma_{ta}}\)\)
\lvert\phi_a\rvert_{H_a}^2
=c.&
\end{align}
\end{subequations}
\end{definition}

Here, 
$\lvert\phi_a\rvert_{H_a}^2\defeq\Tr(\phi_a\circ\phi_a^{*H_a})\in
C^\infty(X)$ is the pointwise squared norm, and $c$ is a constant,
determined by the parameters $\rho,\sigma,\tau$, the cohomology class
of $\omega$, and the characteristic classes of the manifold $X$ and
the vector bundles $E_i$. More precisely,
\begin{align*}
c\operatorname{Vol}_\omega(X)
&=2\int_X\rho_\omega\wedge\frac{\omega^{n-1}}{(n-1)!}
-4\rho\sum_{i\in Q_0}\sigma_i\int_X\Tr F_{H_i}^2\wedge\frac{\omega^{n-2}}{(n-2)!}
\\&
+4\rho\operatorname{Vol}_\omega(X)\sum_{i\in Q_0}\(\frac{\tau_i}{\sigma_i}-\mu_\omega(E_i)\)\tau_ir_i,
\end{align*}
where $\Vol_\omega(X)=\int_X\omega^n/n!$, $r_i$ is the rank of $E_i$, its normalized
$\omega$-slope is
\begin{equation}\label{eq:quivers.Slope}
\mu_\omega(E_i)\defeq\frac{1}{\operatorname{Vol}_\omega(X)}\frac{1}{r_i}\int_X\Tr(\imag
F_{A_i})\wedge\frac{\omega^{n-1}}{(n-1)!}, 
\end{equation}
and $\rho_\omega$ is the Ricci form. To see this, we
integrate~\eqref{eq:quivers.Equations.1b}, use~\eqref{eq:def-S}, and
also integrate the following identity (that follows
from~\eqref{eq:quivers.Equations.1a})
\begin{equation}\label{eq:quivers.Equations.constant}
\begin{split}
\sum_{a\in Q_1}\(\frac{\tau_{ha}}{\sigma_{ha}}-\frac{\tau_{ta}}{\sigma_{ta}}\)\lvert\phi_a\rvert_{H_a}^2
&
=\sum_{i\in Q_0}\frac{\tau_i}{\sigma_i}\Tr[\phi,\phi^{*H}]_i
=\sum_{i\in Q_0}\(\frac{\tau_i^2r_i}{\sigma_i}-\tau_i\Tr(\imag\Lambda_\omega F_{H_i})\).
\end{split}
\end{equation}

Given a fixed K\"ahler form $\omega$ on $X$, the first set of
equations~\eqref{eq:quivers.Equations.1a}, involving 
a Hermitian metric $H$ on $(E,\phi)$, were called the
\emph{$(\sigma,\tau)$-vortex equations} on $(E,\phi)$ over the
K\"ahler manifold $(X,\omega)$ in~\cite{AG2}, where their symplectic
interpretation and their relation with a
\emph{$(\sigma,\tau)$-polystability condition} were provided.
To explain how the larger set of
equations~\eqref{eq:quivers.Equations.1} fit in the general moment-map
picture of Section~\ref{sec:KYMH}, we now fix the metrics and consider
the holomorphic data as the unknowns. More precisely, we fix a compact
real manifold $X$ of dimension $2n$, with a symplectic form $\omega$,
and a pair $(E,H)$ consisting of a set of $C^\infty$ (complex) vector
bundles $E_i$ of ranks $r_i$, and a set of Hermitian metrics $H_i$ on
$E_i$, indexed by the vertices $i\in Q_0$. Let $P_i$ be the frame
$G_i$-bundle of the Hermitian vector bundle $E_i$, where
$G_i=\U(r_i)$, for all $i\in Q_0$, and $\cX_i$ the extended gauge
group of $P_i$ over $(X,\omega)$. Let $P\to X$ be the fibre product of
the principal bundles $P_i\to X$, for all $i\in Q_0$, and $\cX$ the
extended gauge group of $P$ over $(X,\omega)$. Then $P$ is a principal
$G$-bundle, where $G$ is the direct product of the groups $G_i$, for
all $i\in Q_0$, and we have short exact sequences
\[
1\to\cG_i\lto\cX_i\lra{\pr}\cH\to 1,
\qquad
1\to\cG\lto\cX\lra{\pr}\cH\to 1,
\]
where $\cG_i$ is the gauge group of $P_i$, the gauge group of $P$ is
the direct product
\begin{equation}\label{eq:quivers.gauge-group}
\cG=\prod_{i\in Q_0}\cG_i,
\end{equation}
and $\pr\colon\cX\to\cH$ is the fibre product of the group morphisms
$\pr\colon\cX_i\to\cH$, for all $i\in Q_0$.

Let $\cA_i$ is the space of connections on $P_i$. Consider the space
of connections on $P$, denoted
\[
\cA=\prod_{i\in Q_0}\cA_i.
\]
To specify a symplectic structure on $\cA$, we fix a vector
$\alpha\in\RR^{Q_0}_{>0}$, and define an Ad-invariant positive
definite inner product~\eqref{eq:Lie-algebra-inner-product} on the Lie
algebra $\glg$ of $G$, given for all $a,b\in\glg$ by
\begin{equation}\label{eq:quivers.Lie-algebra-inner-product}
(a,b)=-\sum_{i\in Q_0}\alpha_i\Tr(a_i\circ b_i),
\end{equation}
where $a_i,b_i$ are in the Lie algebra $\glg_i$ of $G_i$. Then the
symplectic form~\eqref{eq:SymfC} on $\cA$ becomes
\begin{equation}\label{eq:quivers.SymfC}
\omega_{\cA}(a,b)=-\sum_{i\in Q_0}\alpha_i\int_X \Tr(a_i\wedge b_i)\wedge\frac{\omega^{n-1}}{n-1!},
\end{equation}
for $A\in\cA$, $a,b\in T_A\cA=\Omega^1(\ad E)$. Consider the element
$z$ of the centre of $\glg$ given by $z_i=-\imag c_i\Id_{E_i}$, for
all $i\in Q_0$, for fixed $c_i\in\RR$. By
Proposition~\ref{prop:momentmap-X}, the $\cX$-action on $\cA$ has
equivariant moment map $\mu_\cX\colon \cA\to (\LieX)^*$ given for all
$A\in\cA$, $\zeta\in\LieX$ by
\begin{equation}\label{eq:quivers-mm-connections}
\begin{split}
\langle\mu_\cX(A),\zeta\rangle 
&= \imag\sum_{i\in Q_0}\alpha_i\int_X \Tr\(\xi_i(\imag\Lambda_\omega F_{A_i}-c_i\Id_{E_i})\) \frac{\omega^{n}}{n!}
\\&
+\frac{1}{4}\int_X f\sum_{i\in Q_0}\(\alpha_i\Lambda_\omega^2\Tr F_{A_i}^2 + 4c_i\alpha_i \Tr(\imag\Lambda_\omega F_{A_i})\)\frac{\omega^{n}}{n!},
\end{split}
\end{equation}
where $\xi\defeq A\zeta\in\Lie\cG$
(so $\xi_i=A_i\zeta\in\LieG_i$), and $\pr(\zeta)=\eta_f$ with $f\in
C_0^\infty(X)$ (see~\eqref{eq:eta_phi}).

Define a Hermitian vector bundle over $X$ by
\[
\cR=\bigoplus_{a\in Q_0}\cR_a,  \text{ with }\cR_a=\Hom(E_{ta},E_{ha}),
\]
where the Hermitian metric is the orthogonal direct sum of the
Hermitian metrics $H_a$ on the vector bundles $\cR_a$, given by the
formulae $(\phi_a,\psi_a)_{H_a}\defeq\Tr(\phi_a\psi_a^{*H_a})$, for
all $\phi_a,\psi_a$ in the same fibre of $\cR_a$. Consider now the
space of $C^\infty$ global sections of $\cR$,
\[
\cS=\bigoplus_{a\in Q_0}\cS_a,    \text{ with }\cS_a=\Gamma(X,\cR_a).
\]
Then $\cS$ has a symplectic form $\omega_\cS$ defined for all
$\phi\in\cS$, $\dot{\phi},\dot{\psi}\in T_\phi\cS\cong\cS$ by
\[
\omega_\cS(\dot{\phi},\dot{\psi})=\imag\sum_{a\in Q_1}\int_X\Tr(\dot{\phi}_a\dot{\psi}_a^*-\dot{\psi_a}\dot{\phi}_a^*)\frac{\omega^{n}}{n!}.
\]
Since $\omega_\cS=d\sigma$, for the 1-form $\sigma$ on $\cS$ given for
all $\phi\in\cS, \dot{\phi}\in T_\phi\cS$ by
\[
\sigma(\dot{\phi})=-\frac{\imag}{2}\sum_{a\in Q_1}\int_X
(\dot{\phi}_a\phi_a^*-\phi_a\dot{\phi}_a^*)\frac{\omega^{n}}{n!},
\]
the canonical $\cX$-action is Hamiltonian, with equivariant moment map
$\mu_\cS\colon\cS\to(\LieX)^*$ given by
$\langle\mu_\cS(\phi),\zeta\rangle=-\sigma(Y_\zeta(\phi))$, where the
infinitesimal action of $\zeta\in\LieX$ on $\cS$ is the vector field
on $\cS$ with value $Y_\zeta(\phi)=\xi\cdot\phi-\pr(\zeta)\lrcorner
d_A\phi$ on $\phi\in\cS$. Here, $\xi=A\zeta\in\LieG$ (so
$\xi_i=A_i(\zeta_i)\in\LieG_i$), the action of $\xi$ on $\phi$ is
given by $(\xi\cdot\phi)_a=\xi_{ha}\phi_a-\phi_a\xi_{ta}$, and
$d_A\phi$ is the covariant derivative with respect to the connection
induced by $A$ on $\cR$. More explicitly,
\begin{equation}\label{eq:quivers-mm-Higgs-fields}
\begin{split}
\langle\mu_\cS(\phi),\zeta\rangle 
&=\imag\sum_{i\in Q_0}\int_X\Tr([\phi,\phi^*]_i\xi_i)\frac{\omega^{n}}{n!}
-\imag\int_X f\sum_{a\in Q_1}\Lambda_\omega d(d_{A_a}\phi_a,\phi_a)_{H_a}\frac{\omega^{n}}{n!}.
\end{split}\end{equation}

Fix $\rho\in\RR_{>0}$. Then we consider the space of triples
$\cJ\times\cA\times\cS$, with the symplectic form
\begin{equation}\label{eq:quivers.symplecticT}
\omega_{\alpha,\rho}=\omega_\cJ+4\omega_\cA+4\rho\omega_\cS,
\end{equation}
with $\cJ$ and $\omega_\cJ$ as in Section~\ref{section:cscKmmap}.
Adding~\eqref{eq:scmom},~\eqref{eq:quivers-mm-connections}
and~\eqref{eq:quivers-mm-Higgs-fields}, we see that the diagonal
$\cX$-action on $\cJ\times\cA\times\cS$ has equivariant moment map
$\mu_{\alpha,\rho}\colon\cJ\times\cA\times\cS\to(\LieX)^*$ given
by 
\begin{align}\label{eq:quiver-mm1}&
\langle\mu_{\alpha,\rho}(J,A,\phi),\zeta\rangle=
4\imag\sum_{i\in Q_0}\int_X \Tr\(\xi_i(\alpha_i\imag\Lambda_\omega F_{A_i}+\rho[\phi,\phi^*]_i-\alpha_ic_i\Id_{E_i})\) \frac{\omega^{n}}{n!}
\\\notag &
\! -\!\int_X f\(S_J-\!\sum_{i\in Q_0}\(\alpha_i\Lambda_\omega^2\Tr F_{A_i}^2 + 4c_i\alpha_i\Tr(\imag\Lambda_\omega F_{A_i})\)
+4\rho\!\sum_{a\in Q_1}\imag\Lambda_\omega d(d_{A_a}\phi_a,\phi_a)_{H_a}\)\frac{\omega^{n}}{n!},
\end{align}
for all $(J,A,\phi)\in\cJ\times\cA\times\cS$, $\zeta\in\LieX$, with
$\xi\defeq A\zeta\in\LieG$, $\pr(\zeta)=\eta_f$, $f\in
C_0^\infty(X)$. 

Consider the $\cX$-invariant subspace
$\cT\subset\cJ\times\cA\times\cS$ of `integrable triples'
$(J,A,\phi)$, defined by the conditions
$J\in\cJ^i,A_i\in(\cA_i)^{1,1}_J, \dbar_{J,A_a}\phi_a=0$, for all
$i\in Q_0, a\in Q_1$ (cf.~\eqref{eq:spaceT}). Since 
\[
\Delta_\omega\lvert\phi_a\rvert_{H_a}^2
=2\imag\Lambda_\omega\dbar\partial\lvert\phi_a\rvert_{H_a}^2
=2\imag\Lambda_\omega d(d_{A_a}\phi_a,\phi_a)_{H_a}
\]
when $\dbar_{A_a}\phi_a=0$, the $\cX$-action 
has equivariant moment map $\mu_{\alpha,\rho}\colon\cT\to(\LieX)^*$
given by
\begin{align}\label{eq:quiver-mm2}
\langle\mu_{\alpha,\rho}&(J,A,\phi),\zeta\rangle=
4\imag\sum_{i\in Q_0}\int_X \Tr\(\xi_i(\alpha_i\imag\Lambda_\omega F_{A_i}+\rho[\phi,\phi^*]_i-\alpha_ic_i\Id_{E_i})\) \frac{\omega^{n}}{n!}
\\\notag&
-\int_X f\(S_J
+2\rho\Delta_\omega\sum_{a\in Q_1}\lvert\phi_a\rvert_{H_a}^2
-\sum_{i\in Q_0}\(\alpha_i\Lambda_\omega^2\Tr F_{A_i}^2 + 4c_i\alpha_i\Tr(\imag\Lambda_\omega F_{A_i})\)\)\frac{\omega^{n}}{n!},
\end{align}
for all $(J,A,\phi)\in\cT$. Defining now $\sigma_i=\alpha_i/\rho$ and
$\tau_i=\alpha_ic_i/\rho$, we see that the vanishing condition
$\mu_{\alpha,\rho}(J,A,\phi)=0$ for a triple $(J,A,\phi)\in\cT$ is
equivalent to the equations
\begin{equation}\label{eq:quivers.Equations.bis.1}
\begin{split}
\sigma_i\imag\Lambda_\omega F_{H_i} + [\phi,\phi^{*H}]_i
=\tau_i\Id_{E_i},&\\
S_\omega 
+2\rho\Delta_\omega\sum_{a\in Q_1}\lvert\phi_a\rvert_{H_a}^2
-\rho\sum_{i\in Q_0}(\sigma_i\Lambda_\omega^2\Tr F_{H_i}^2
+4\tau_i\Tr(\imag\Lambda_\omega F_{H_i}))
=c',
\end{split}
\end{equation}
expressed in terms of the metrics $\omega$ and $H$, where
$c'\in\RR$. By~\eqref{eq:quivers.Equations.constant}, these equations
are equivalent to the gravitating vortex
equations~\eqref{eq:quivers.Equations.1}, with $c'$ replaced by
another $c\in\RR$.

\subsection{Dimensional reduction}
\label{sub:quivers.DimRed}

We will now consider the invariant solutions of the
K\"ahler--Yang--Mills equations on an equivariant vector bundle over
$M=X\times \GG/P$. Here, $X$ is a compact complex manifold, $\GG$ is a
connected simply connected semisimple complex Lie group, and $P\subset
\GG$ is a parabolic subgroup, so the quotient $\GG/P$ for the $P$-action
by right multiplication on $\GG$ is a flag manifold. The group $\GG$ acts
trivially on the first factor $X$ and in the standard way on
$\GG/P$. The K\"ahler--Yang--Mills equations for the compact complex
manifold $M$, a holomorphic vector bundle $\widetilde{E}\to M$,
and a fixed real parameter $\alpha>0$, are
\begin{subequations}\label{eq:quivers.KYM}
\begin{align}\label{eq:quivers.KYM.a}&
\imag\Lambda_{\widetilde{\omega}}F_{\widetilde{H}}=\mu_{\widetilde{\omega}}(\widetilde{E})\Id_{\widetilde{E}},
\\\label{eq:quivers.KYM.b}&
S_{\widetilde{\omega}}-\alpha\Lambda_{\widetilde{\omega}}^2\Tr F^2_{\widetilde{H}}=C.
\end{align}
\end{subequations}
They involve a pair consisting of a K\"ahler form $\widetilde{\omega}$
on $M$ and a Hermitian metric $\widetilde{H}$ on $\widetilde{E}$, with
normalized slope $\mu_{\widetilde{\omega}}(\widetilde{E})$ defined
by~\eqref{eq:quivers.Slope}.

Let $L\subset P$ be a (reductive) Levi subgroup, and $K\subset \GG$ a
maximal compact Lie subgroup.
Then the $K$-invariant K\"ahler 2-forms $\omega_\varepsilon$ on the
complex $\GG$-manifold $K/(K\cap P)\cong \GG/P$ are parametrized by
elements $\varepsilon\in\RR_{>0}^\Sigma$ (see~\cite[p.~38,
Lemma~4.8]{AG1}), where $\Sigma$ is a fixed set of `non-parabolic
simple roots', as defined in ~\cite[\S 1.5.1]{AG1}. For a fixed
$\varepsilon\in\RR_{>0}^\Sigma$ and each choice of K\"ahler form
$\omega$ on $X$, we consider the $K$-invariant K\"ahler form on $M$
defined by 
\begin{equation}\label{eq:quivers.omega_epsilon}
\widetilde{\omega}=\omega+\omega_\varepsilon
\end{equation}
(hereafter we omit the symbols for the pullbacks by the canonical
projections $M\to X$, $M\to \GG/P$).

In~\cite{AG1}, the first and the third authors proved that there exist
an infinite quiver $Q$ and a set of relations $\cK$ of $Q$, such that
a $\GG$-equivariant holomorphic vector bundle $\widetilde{E}$ over $M$
is equivalent to a holomorphic $Q$-bundle $(E,\phi)$ over $X$ that
satisfies the relations in $\cK$ (see~\cite[p.~19,
Theorem~2.5]{AG1}). The vertex set $Q_0$ consists of the isomorphism
classes of (finite-dimensional complex) irreducible representations of
$L$. Under this equivalence, the $K$-invariant Hermitian metrics
$\widetilde{H}$ on the vector bundle $\widetilde{E}$ over $M$ are in
bijection with the Hermitian metrics $H$ on the quiver bundle
$(E,\phi)$ over $X$ (see~\cite[\S 4.2.4]{AG1}). Furthermore, for each
choice of K\"ahler form $\omega$ on $X$, a $K$-invariant Hermitian
metric $\widetilde{H}$ satisfies the Hermitian--Yang--Mills
equation~\eqref{eq:quivers.KYM.a} on $\widetilde{E}$ over
$(M,\widetilde{\omega})$ if and only if the corresponding Hermitian
metric $H$ on $(E,\phi)$ over $(X,\omega)$ satisfies the quiver
$(\sigma,\tau)$-vortex equations~\eqref{eq:quivers.Equations.1a}
(see~\cite[\S 4.2.2, Theorem~4.13]{AG1}). Here, the parameters
$\sigma\in\RR_{>0}^{Q_0}$ and $\tau\in\RR^{Q_0}$ are given by
\begin{equation}\label{eq:quivers.Parameters.1}
\sigma_\lambda=\dim_\CC M_\lambda,
\quad
\tau_\lambda=\sigma_\lambda(\mu_{\widetilde{\omega}}(\widetilde{E})-\mu_\varepsilon(\cO_\lambda)), 
\end{equation}
for all $\lambda\in Q_0$, where $M_\lambda$ is an irreducible
representation of $L$ (or $P$) in the isomorphism class $\lambda$,
$\cO_\lambda=\GG\times_P M_\lambda$ is the homogeneous vector bundle
over $\GG/P$ associated to $M_\lambda$, and the normalized slopes
$\mu_\varepsilon(\cO_\lambda) \defeq
\mu_{\omega_\varepsilon}(\cO_\lambda)$, defined
by~\eqref{eq:quivers.Slope}, are explicitly given by~\cite[(4.16), \S
4.2.3]{AG1}. Note that the vortex
equations~\eqref{eq:quivers.Equations.1a}, and the symplectic
interpretation in Section~\ref{sub:quivers.Equations}, make sense for
the infinite quiver $Q$, as $E_\lambda\neq 0$ only for finitely many
$\lambda\in Q_0$, and the quiver $Q$ is locally finite, that is,
$t^{-1}(a)$ and $h^{-1}(a)$ are finite sets for all $a\in Q_1$.

The following correspondence extends these bijections to the
K\"ahler--Yang--Mills equations. It
includes~\cite[Proposition~3.4]{AGG2} for a particular class of
equivariant bundles when $\GG/P=\PP^1$.  As above, $\widetilde{E}$ is
a $\GG$-equivariant holomorphic vector bundle over $M$, and $(E,\phi)$
is the corresponding holomorphic $Q$-bundle over $X$.

\begin{theorem}\label{thm:quivers.Dim-Red}
Let $\omega$ be a K\"ahler form on $X$ and $\widetilde{\omega}$ the
$K$-invariant K\"ahler form on $M$ defined
by~\eqref{eq:quivers.omega_epsilon}.
Let $\widetilde{H}$ be a $K$-invariant Hermitian metric on
$\widetilde{E}$, and $H$ the corresponding Hermitian metric on
$(E,\phi)$.
Then the pair $(\widetilde{\omega},\widetilde{H})$ satisfies the
K\"ahler--Yang--Mills equations~\eqref{eq:quivers.KYM} if and only if
$(\omega,H)$ satisfies the quiver $(\rho,\sigma,\tau)$-vortex
equations~\eqref{eq:quivers.Equations.1}, where $\rho\defeq\alpha$,
and $\sigma\in\RR_{>0}^{Q_0}$ and $\tau\in\RR^{Q_0}$ are given
by~\eqref{eq:quivers.Parameters.1}.
\end{theorem}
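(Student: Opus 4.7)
The plan is to use the correspondence between $K$-equivariant holomorphic structures on $\widetilde{E}$ and quiver bundle data on $X$ to reduce the Kähler--Yang--Mills equations \eqref{eq:quivers.KYM} to the gravitating quiver vortex equations \eqref{eq:quivers.Equations.1}. The first equation \eqref{eq:quivers.KYM.a} is the Hermitian--Yang--Mills equation for $\widetilde{E}$ over $(M,\widetilde{\omega})$; by \cite[Theorem~4.13]{AG1}, under the identifications \eqref{eq:quivers.Parameters.1}, this is equivalent to the first set of quiver vortex equations \eqref{eq:quivers.Equations.1a}. Thus the entire burden of the proof is to show that, assuming \eqref{eq:quivers.Equations.1a}, the scalar-curvature equation \eqref{eq:quivers.KYM.b} on $(M,\widetilde{\omega},\widetilde{H})$ descends to \eqref{eq:quivers.Equations.1b} on $(X,\omega,H)$.

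I would first handle the two curvature invariants in \eqref{eq:quivers.KYM.b} separately. For the product Kähler form \eqref{eq:quivers.omega_epsilon} the Hermitian scalar curvature splits as $S_{\widetilde{\omega}}=S_\omega+S_{\omega_\varepsilon}$, and $S_{\omega_\varepsilon}$ is a constant on the homogeneous space $\GG/P$ since $\omega_\varepsilon$ is $K$-invariant. For the Chern curvature of $\widetilde{H}$ I would use the $K$-equivariant isotypical description from \cite[\S 4.2]{AG1}: smoothly, $\widetilde{E}=\bigoplus_\lambda E_\lambda\otimes\cO_\lambda$ with induced Hermitian metric, and its Chern connection is a sum of the pullbacks of the Chern connections of the $H_\lambda$ on $E_\lambda$ and of the canonical connections on $\cO_\lambda$, plus off-diagonal pieces built from the quiver morphisms $\phi_a$ and their $H$-adjoints. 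Expanding $\Tr F_{\widetilde{H}}^2$ in this decomposition produces three classes of contributions: diagonal curvature-squared pieces weighted by $\sigma_\lambda=\dim_\CC M_\lambda$; mixed pieces pairing $\Tr F_{H_\lambda}$ against the curvature of $\cO_\lambda$; and off-diagonal quadratic Higgs pieces of the form $(d_{A_a}\phi_a,\phi_a)_{H_a}$.

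Next I would apply $\Lambda_{\widetilde{\omega}}^2$, using $\widetilde{\omega}^2=\omega^2+2\omega\wedge\omega_\varepsilon+\omega_\varepsilon^2$ to split each contraction between the two factors of $M$. The diagonal pieces yield $\sum_\lambda \sigma_\lambda\Lambda_\omega^2\Tr F_{H_\lambda}^2$ plus constants coming from integrating the appropriate power of $\omega_\varepsilon$ against $\Tr F_{\cO_\lambda}^2$ over $\GG/P$. The mixed pieces, by the definition \eqref{eq:quivers.Slope} of the slope, produce terms proportional to $\mu_\varepsilon(\cO_\lambda)\Tr(\imag\Lambda_\omega F_{H_\lambda})$, which via \eqref{eq:quivers.Parameters.1} and the already-assumed equation \eqref{eq:quivers.Equations.1a} regroup into the constant part of \eqref{eq:quivers.Equations.1b} and into the ratios $\tau_\lambda/\sigma_\lambda$. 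The off-diagonal Higgs pieces, after using $\dbar_{A_a}\phi_a=0$ together with the identity $\Delta_\omega|\phi_a|_{H_a}^2=2\imag\Lambda_\omega\,d(d_{A_a}\phi_a,\phi_a)_{H_a}$ already invoked between \eqref{eq:quiver-mm1} and \eqref{eq:quiver-mm2}, should yield exactly the operator $\Delta_\omega+2(\tau_{ha}/\sigma_{ha}-\tau_{ta}/\sigma_{ta})$ applied to $|\phi_a|_{H_a}^2$; the shift by $\tau/\sigma$ differences arises from the curvature of $\cO_{ta}^\vee\otimes\cO_{ha}$ contracted with $\omega_\varepsilon$, whose value at a $K$-fixed point is $\mu_\varepsilon(\cO_{ha})-\mu_\varepsilon(\cO_{ta})$.

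I expect the main obstacle to be the bookkeeping of these expansions, in particular tracking how the multiplicities $\sigma_\lambda$ and the slopes $\mu_\varepsilon(\cO_\lambda)$ appear in every cross term, and verifying that after imposing \eqref{eq:quivers.Equations.1a} and integrating over the fibre $\GG/P$ the only surviving off-diagonal contributions are those labelled by genuine arrows of $Q$; this should follow from Schur's lemma applied to the $L$-equivariant structure of the $\cO_\lambda$, as in \cite[\S 4.2.3]{AG1}. Matching of the constants $C$ and $c$ on the right-hand sides is then automatic, since both are fixed by Chern--Weil computations on $(X,\omega)$ and $(\GG/P,\omega_\varepsilon)$ and reconciled by the consistency identity \eqref{eq:quivers.Equations.constant}.
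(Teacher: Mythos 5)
Your outline follows the same computational core as the paper's proof: identify \eqref{eq:quivers.KYM.a} with \eqref{eq:quivers.Equations.1a} via the formula for $\imag\Lambda_{\widetilde{\omega}}F_{\widetilde{A}}$ from \cite[\S 4.2.4]{AG1}, then expand $\Lambda_{\widetilde{\omega}}^2\Tr F_{\widetilde{H}}^2$ through the isotypical decomposition $\widetilde{E}=\bigoplus_\lambda E_\lambda\otimes\cO_\lambda$ and the connection $d_{\widetilde{A}}=d_{A^\circ}+\theta$, $\theta=\beta-\beta^*$. However, there are two genuine gaps. First, your sorting of $\Tr F_{\widetilde{A}}^2$ into three classes of terms misses half of the expansion: writing $F_{\widetilde{A}}=F_{A^\circ}+d_{A^\circ}\theta+\theta^2$ also produces $\Tr(F_{A^\circ}\wedge d_{A^\circ}\theta)$, $\Tr(\theta^4)$ and $\Tr(d_{A^\circ}\theta\wedge\theta^2)$. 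The first vanishes because $Q'$ has no oriented cycles and the second by antisymmetry of the trace on $1$-forms, but $\Lambda_{\widetilde{\omega}}^2\Tr(d_{A^\circ}\theta\wedge\theta^2)$ is a nonzero pointwise expression of the form $\sum_{a,b,c}\Tr(\phi_a\circ\phi_b\circ\phi_c^*)\,\Lambda_{\omega_\varepsilon}^2 d\Tr(\eta_a\wedge\eta_b\wedge\eta_c^*)+\mathrm{c.c.}$, and it disappears only after fibre integration over $\GG/P$ (it is $K$-invariant and its fibre average is the integral of an exact form). Schur's lemma, which you invoke, controls which isotypical components of $\beta$ are nonzero but does not kill this cubic term. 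The same issue affects the fibre-curvature contributions $\Tr(\phi_a\circ\phi_b^*)\Lambda_{\omega_\varepsilon}^2\Tr(F_{A'}\wedge\eta_a\wedge\eta_b^*)$, which combine with $\Lambda_{\omega_\varepsilon}^2\Tr(d_{A'_a}\eta_a\wedge d_{A'_b}\eta_b^*)$ into a fibre-exact term with vanishing fibre integral. This is exactly what the paper's moment-map formulation buys: one tests $S_\alpha(\widetilde{J},\widetilde{A})$ against $K$-invariant functions $\widetilde{f}=f\circ p_X$, so fibre integration is built in and these terms drop out legitimately. A purely pointwise identity, which is what your write-up suggests, is false.

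Second, your proposed mechanism for the zeroth-order shift $2(\tau_{ha}/\sigma_{ha}-\tau_{ta}/\sigma_{ta})\lvert\phi_a\rvert_{H_a}^2$ is not the one that survives the bookkeeping. By \eqref{eq:quivers.Parameters.1} one has $\tau_{ha}/\sigma_{ha}-\tau_{ta}/\sigma_{ta}=\mu_\varepsilon(\cO_{ta})-\mu_\varepsilon(\cO_{ha})$, the opposite sign of the fibre-curvature value you quote; and in any case the $\Hom(\cO_{ta},\cO_{ha})$-curvature terms are precisely the ones that cancel against fibre derivatives as above. In the paper the shift arises instead from the diagonal contributions $-4\sigma_\lambda\mu_\varepsilon(\cO_\lambda)\Tr(\imag\Lambda_\omega F_{A_\lambda})$ in \eqref{eq:thm:quivers.Dim-Red.8a}, combined with the slope term in \eqref{eq:thm:quivers.Dim-Red.4b} to give $4\tau_\lambda\Tr(\imag\Lambda_\omega F_{A_\lambda})$, which is then converted into the arrow-indexed form using the first vortex equation through the identity \eqref{eq:quivers.Equations.constant}; meanwhile the $\Tr(F_{A^\circ}\wedge\theta^2)$ and $\Tr((d_{A^\circ}\theta)^2)$ terms assemble into the exact term $-4\imag\Lambda_\omega d\Tr(d_{A_a}\phi_a\circ\phi_a^*)=-2\Delta_\omega\lvert\phi_a\rvert_{H_a}^2$ (using $\dbar_{A_a}\phi_a=0$). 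Without these corrections your derivation would not land on \eqref{eq:quivers.Equations.1b}.
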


\begin{proof}
Let $Q'\subset Q$ be the finite full subquiver with vertex set $Q_0'$
consisting of the vertices $\lambda\in Q_0$ such that $E_\lambda\neq
0$, so $(E,\phi)$ is a $Q'$-bundle over $X$. Let $\widetilde{A}$ and
$A_\lambda$ be the Chern connections of $\widetilde{H}$ and
$H_\lambda$ on the holomorphic vector bundles $\widetilde{E}$ and
$E_\lambda$, respectively, for $\lambda\in Q_0'$. The vector bundles
$E_\lambda$ and the Hermitian metrics $H_\lambda$ on $E_\lambda$, for
$\lambda\in Q_0'$, specify the $K$-action on $\widetilde{E}$ and its
Hermitian metric $\widetilde{H}$, respectively, via the identification
\begin{equation}\label{eq:thm:quivers.Dim-Red.1}
\widetilde{E}=\bigoplus_{\lambda\in Q_0'}E_\lambda\otimes\cO_\lambda
\end{equation}
between $K$-equivariant $C^\infty$ Hermitian vector bundles, where the
homogeneous vector bundles $\cO_\lambda$ are endowed with their unique
(up to scale) $K$-invariant Hermitian metrics. Furthermore, the Higgs
fields $\phi_a$ and the unitary connections $A_\lambda$, for $a\in
Q_1,\lambda\in Q_0'$, specify the unitary connection $\widetilde{A}$ on
$\widetilde{E}$, given by $d_{\widetilde{A}}=d_{A^\circ}+\theta$, with
$\theta=\beta-\beta^*\in\Omega^1(\ad\widetilde{E})$ and
\begin{equation}\label{eq:thm:quivers.Dim-Red.2}
d_{A^\circ}=\sum_{\lambda\in Q_0'}(d_{A_\lambda}\otimes\Id_{\cO_\lambda}+\Id_{E_\lambda}\otimes d_{A_\lambda'})\circ\pi_\lambda,
\quad
\beta=\sum_{a\in Q_1'}\phi_a\otimes\eta_a,
\end{equation}
where $A'_\lambda$ is the unique $K$-invariant unitary connection on
$\cO_\lambda$, $\pi_\lambda\colon\widetilde{E}\to E_\lambda\otimes
\cO_\lambda$ are the canonical projections, and $\{\eta_a\mid a\in
t^{-1}(\lambda)\cap h^{-1}(\mu)\}$ is a basis of the space of
$K$-invariant $\Hom(\cO_\lambda,\cO_\mu)$-valued $(0,1)$-forms on
$\GG/P$, for all $\lambda,\mu\in Q_0'$ (see~\cite[\S 3.4.5]{AG1}).

We will use the moment-map interpretations of the
K\"ahler--Yang--Mills equations and the quiver gravitating vortex
equations. Let $\widetilde{\cJ}$, $\widetilde{\cA}$ and $\cX_M$ be the
space of almost complex structures $\widetilde{J}$ on
$(M,\widetilde{\omega})$, the space of unitary connections on
$(\widetilde{E},\widehat{H})$, and the extended gauge group of the
symplectic manifold $(M,\widetilde{\omega})$ and the Hermitian vector
bundle $(\widetilde{E},\widetilde{H})$, respectively.
By~\cite[Proposition~2.1]{AGG}, the $\cX_M$-action on
$\widetilde{\cJ}\times\widetilde{\cA}$, with symplectic form
$\omega_{\widetilde{\cJ}} +4\alpha\omega_{\widetilde{\cA}}$, has
equivariant moment map
$\widetilde{\mu}_{\cX_M}\colon\widetilde{\cJ}\times\widetilde{\cA}
\to(\LieX_M)^*$ given by
\begin{equation}\label{eq:thm:quivers.Dim-Red.3}
\langle\widetilde{\mu}_{\cX_M}(\widetilde{J},\widetilde{A}),\widetilde{\zeta}\rangle
=4\alpha\langle\widetilde{\mu}_{\cG_M}(\widetilde{A}),\widetilde{\xi}\rangle
+\int_M\widetilde{f}S_\alpha(\widetilde{J},\widetilde{A})\frac{\widetilde{\omega}^m}{m!},
\end{equation}
for all $\widetilde{\zeta}\in\LieX_M$, where
$\widetilde{\xi}=\widetilde{A}\widetilde{\zeta}\in\LieG_M$,
$\widetilde{f}\in C^\infty_0(M)$ is such that
$d\widetilde{f}=\pr(\widetilde{\zeta})\lrcorner\widetilde{\omega}$,
$m=\dim_\CC M$, and
$\widetilde{\mu}_{\cG_M}\colon\widetilde{\cA}\to(\LieG_M)^*$,
$S_\alpha(\widetilde{J},\widetilde{A})\in C^\infty(M)$
(cf.~\cite[(3.78)]{AGG}) are given by
\begin{subequations}\label{eq:thm:quivers.Dim-Red.4}
\begin{align}&\label{eq:thm:quivers.Dim-Red.4a}
\langle\widetilde{\mu}_{\cG_M}(\widetilde{A}),\widetilde{\xi}\rangle
=\imag\int_M\Tr((\imag\Lambda_{\widetilde{\omega}}F_{\widetilde{A}}-\mu_{\widetilde{\omega}}(\widetilde{E})\Id_{\widetilde{E}})\widetilde{\xi})\frac{\widetilde{\omega}^m}{m!},
\\&\label{eq:thm:quivers.Dim-Red.4b}
S_\alpha(\widetilde{J},\widetilde{A})
=-S_{\widetilde{J}}
+4\alpha\mu_{\widetilde{\omega}}(\widetilde{E})\Tr(\imag\Lambda_{\widetilde{\omega}} F_{\widetilde{A}})
+\alpha\Lambda_{\widetilde{\omega}}^2\Tr F_{\widetilde{A}}^2.
\end{align}
\end{subequations}
By construction,
$\Lambda_{\widetilde{\omega}}F_{\widetilde{A}}+\imag\mu_{\widetilde{\omega}}(\widetilde{E})\Id_{\widetilde{E}}
\in\LieX_M^K$, so $\widetilde{\mu}_{\cG_M}(\widetilde{A})=0$ if and
only if
$\langle\widetilde{\mu}_{\cG_M}(\widetilde{A}),\widetilde{\xi}\rangle=0$
for all $\widetilde{\xi}\in\LieG_M^K$ (where $(-)^K$ means the
fixed-point subspace for the $K$-action). Using the last displayed
formula for $\imag\Lambda_{\widetilde{\omega}}F_{\widetilde{A}}$
in~\cite[\S 4.2.4]{AG1}), we see that
\begin{equation}\label{eq:thm:quivers.Dim-Red.5}
\langle\widetilde{\mu}_{\cG_M}(\widetilde{A}),\widetilde{\xi}\rangle
=\Vol_\varepsilon(\GG/P)\sum_{\lambda\in
  Q_0'}\imag\int_X\Tr\((\sigma_\lambda\imag\Lambda_\omega F_{A_\lambda}+[\phi,\phi^*]_\lambda-\tau_\lambda\Id_{E_\lambda})\xi_\lambda\)
\frac{\omega^n}{n!},
\end{equation}
where $\xi\in\LieG^K$ corresponds to $(\xi_\lambda)_{\lambda\in
  Q_0'}$, with $\xi_\lambda\in\LieG_\lambda$,
by~\cite[Proposition~3.4]{AG1}, $\cG_\lambda$ being the unitary gauge
group of $E_\lambda$, and
$\Vol_\varepsilon(\GG/P)=\int_{\GG/P}\omega_\varepsilon^l/l!$, with
$l=\dim_\CC(\GG/P)$. This gives the correspondence for the vortex
equations~\eqref{eq:quivers.Equations.1a} and the
Hermitian--Yang--Mills equation~\eqref{eq:quivers.KYM.a}. To
compare~\eqref{eq:quivers.Equations.1b} and~\eqref{eq:quivers.KYM.b},
we calculate separately the terms involved
in~\eqref{eq:thm:quivers.Dim-Red.4b}, namely, 
\begin{subequations}\label{eq:thm:quivers.Dim-Red.6}
\begin{align}\label{eq:thm:quivers.Dim-Red.6a}
-S_{\widetilde{J}}&=-S_J+\text{const.},
\\\label{eq:thm:quivers.Dim-Red.6b}
4\alpha\mu_{\widetilde{\omega}}(\widetilde{E})\Tr(\imag\Lambda_{\widetilde{\omega}} F_{\widetilde{A}})
&=4\rho\mu_{\widetilde{\omega}}(\widetilde{E})
\sum_{\lambda\in Q_0'}\sigma_\lambda\Tr(\imag\Lambda_\omega
F_{A_\lambda}\xi_\lambda)
+\text{const.},
\\\notag
\alpha\Lambda_{\widetilde{\omega}}^2\Tr F_{\widetilde{A}}^2
&=\sum_{\lambda\in Q_0'}\(\rho\sigma_\lambda\Lambda_\omega^2\Tr F_{A}^2
-4\rho\sigma_\lambda\mu_\varepsilon(\cO_\lambda)\Tr(\imag\Lambda_\omega
F_{A_\lambda})\)
\\&\label{eq:thm:quivers.Dim-Red.6c}
\!\!\!\!\!\!\!\!\!\!\!\!\!\!\!\!\!\!\!\!\!\!\!\!
-\sum_{a\in Q_1'}4\rho\imag\Lambda_\omega d\Tr(d_{A_a}\phi_a\circ\phi_a^*)
\\&\notag
\!\!\!\!\!\!\!\!\!\!\!\!\!\!\!\!\!\!\!\!\!\!\!\!
-\rho\sum_{a,b\in Q_1'} \Tr(\phi_a\circ\phi_b^*)
\Lambda_{\omega_\varepsilon}^2 d\Tr(\eta_a\wedge d_{A'_b}\eta_b^*+d_{A'_a}\eta_a\wedge\eta_b^*)
+\text{const.},
\end{align}
\end{subequations}
where $A_a$ (resp. $A'_a$) is the connection induced by $A_{ta}$ and
$A_{ha}$ (resp. $A'_{ta}$ and $A'_{ha}$) on the vector bundle
$\Hom(E_{ta},E_{ha})$ (resp. $\Hom(\cO_{ta},\cO_{ha})$), and the sums
in $a,b\in Q_1'$ in~\eqref{eq:thm:quivers.Dim-Red.6b} are constrained
to the condition $ta=tb, ha=hb$ (so that the traces are well
defined). Formula~\eqref{eq:thm:quivers.Dim-Red.6a} follows because
the scalar curvature of $\omega_\varepsilon$ on $\GG/P$ is $K$-invariant
by construction, and hence it is constant, as $K$ acts effectively on
$\GG/P$. Formula~\eqref{eq:thm:quivers.Dim-Red.6b} is obtained taking
traces in the last displayed formula for
$\imag\Lambda_{\widetilde{\omega}}F_{\widetilde{A}}$ in~\cite[\S
4.2.4]{AG1}). We prove~\eqref{eq:thm:quivers.Dim-Red.6c} making the
substitution $F_{\widetilde{A}} =F_{A^\circ} +
d_{A^\circ}\theta+\theta^2$, obtaining
\begin{equation}\label{eq:thm:quivers.Dim-Red.7}
\begin{split}
\Lambda_{\widetilde{\omega}}^2\Tr F_{\widetilde{A}}^2&
=\Lambda_{\widetilde{\omega}}^2\Tr F_{A^\circ}^2
+2\Lambda_{\widetilde{\omega}}^2\Tr(F_{A^\circ}\wedge d_{A^\circ}\theta)
+\Lambda_{\widetilde{\omega}}^2\Tr(\theta^4)
\\&
+2\Lambda_{\widetilde{\omega}}^2\Tr(d_{A^\circ}\theta\wedge\theta^2)
+\Lambda_{\widetilde{\omega}}^2\Tr((d_{A^\circ}\theta)^2)
+2\Lambda_{\widetilde{\omega}}^2\Tr(F_{A^\circ}\wedge\theta^2),
\end{split}
\end{equation}
and calculating the six terms in the right-hand side:
\begin{subequations}\label{eq:thm:quivers.Dim-Red.8}
\begin{align}&\label{eq:thm:quivers.Dim-Red.8a}
\Lambda_{\widetilde{\omega}}^2\Tr F_{A^\circ}^2
=\sum_{\lambda\in Q_0'}\(\sigma_\lambda \Lambda_\omega^2\Tr F_{A_\lambda}^2
-4\sigma_\lambda\mu_\varepsilon(\cO_\lambda)\Tr(\imag\Lambda_\omega F_{A_\lambda})\)
+\text{ const.},
\\&\label{eq:thm:quivers.Dim-Red.8b}
\Lambda_{\widetilde{\omega}}^2\Tr(F_{A^\circ}\wedge d_{A^\circ}\theta)
=0,\,\,
\Lambda_{\widetilde{\omega}}^2\Tr(\theta^4)
=0,\,\,
\Lambda_{\widetilde{\omega}}^2\Tr(d_{A^\circ}\theta\wedge\theta^2)
=0,
\\&\label{eq:thm:quivers.Dim-Red.8e}
\Lambda_{\widetilde{\omega}}^2\Tr((d_{A^\circ}\theta)^2)
\!=\!\!\sum_{a,b\in Q_1'}\!\(4\Tr\imag\Lambda_\omega(d_{A_a}\phi_a\wedge d_{A_b}\phi_b^*)
-2\Tr(\phi_a\circ\phi_b^*)\Lambda_{\omega_\varepsilon}^2\!\Tr(d_{A_a'}\eta_a\wedge d_{A_b'}\eta_b^*)
\),
\\&\label{eq:thm:quivers.Dim-Red.8f}
\Lambda_{\widetilde{\omega}}^2\Tr(F_{A^\circ}\wedge\theta^2)
\!=\!-2\sum_{a\in Q_1'}\Tr\(\imag\Lambda_\omega
F_{A_{ha}}\circ\phi_a\circ\phi_a^*-\imag\Lambda_\omega F_{A_{ta}}\circ\phi_a^*\circ\phi_a\)
\\\notag&\hspace*{16.8ex}
+\sum_{a,b\in Q_1'}\Tr(\phi_a\circ\phi_b^*)\Lambda_{\omega_\varepsilon}^2\!
\Tr(F_{A_{ha}'}\wedge\eta_a\wedge\eta_b^*+F_{A_{ta}'}\wedge\eta_a^*\wedge\eta_b).
\end{align}
\end{subequations}
Formula~\eqref{eq:thm:quivers.Dim-Red.8a} follows from the definition
of $d_{A^\circ}$ in~\eqref{eq:thm:quivers.Dim-Red.2}, the identities
$\imag\Lambda_{\omega_\varepsilon}F_{A'_\lambda}
=\mu_\varepsilon(\cO_\lambda)\Id_{\cO_\lambda}$ (see~\cite[Lemma~4.15,
\S 4.2.3]{AG1}), and the fact that $\Lambda_{\omega_\varepsilon}^2\Tr
F_{A'_\lambda}^2\in C^\infty(\GG/P)$ is $K$-invariant, and hence
constant.
The first identity in~\eqref{eq:thm:quivers.Dim-Red.8b} follows by
using~\eqref{eq:thm:quivers.Dim-Red.2} and observing the quiver $Q'$
has no oriented cycles~\cite[Lemma~1.15]{AG1}. The second identity
in~\eqref{eq:thm:quivers.Dim-Red.8b} follows from
$\Tr(\theta\wedge\theta^3) =-\Tr(\theta^3\wedge\theta)$ (as $\theta$
is a 1-form). Using~\eqref{eq:thm:quivers.Dim-Red.2} and the
orthogonal direct-sum decomposition
\begin{equation}\label{eq:thm:quivers.Dim-Red.9}
TM=TX\oplus T(\GG/P)
\end{equation}
(with respect to the metric $\widetilde{\omega}$), it is not difficult
to derive the identity
\begin{equation}\label{eq:thm:quivers.Dim-Red.10}
\Lambda_{\widetilde{\omega}}^2\Tr(d_{A^\circ}\theta\wedge\theta^2)
=-\sum_{a,b,c\in Q_1'}\Tr(\phi_a\circ\phi_b\circ\phi_c^*)\otimes\Lambda_{\omega_\varepsilon}^2
d\Tr(\eta_a\wedge\eta_b\wedge\eta_c^*)+\operatorname{c.c.},
\end{equation}
where ``c.c.'' means complex conjugate. The third identity
in~\eqref{eq:thm:quivers.Dim-Red.8b} now follows because the function
$\Lambda_{\widetilde{\omega}}^2\Tr(d_{A^\circ}\theta\wedge\theta^2)$
is $K$-invariant (by construction), so it equals its average by fibre
integration along the canonical projection $M\to X$, that vanishes
because in~\eqref{eq:thm:quivers.Dim-Red.10},
\[
\int_{\GG/P}\Lambda_{\omega_\varepsilon}^2d\Tr(\eta_a\wedge\eta_b\wedge\eta_c^*)\omega_\varepsilon^l=0.
\]

To prove~\eqref{eq:thm:quivers.Dim-Red.8e}, we
use~\eqref{eq:thm:quivers.Dim-Red.2} with $\theta=\beta-\beta^*$, and
the facts that the quiver $Q'$ has no oriented cycles and the
direct-sum decomposition~\eqref{eq:thm:quivers.Dim-Red.9} is
orthogonal, obtaining
\[
\Lambda_{\widetilde{\omega}}^2\Tr((d_{A^\circ}\theta)^2)
=-2\Lambda_{\widetilde{\omega}}^2\Tr(d_{A^\circ}\beta\wedge d_{A^\circ}\beta^*).
\]
To prove that this equals the right-hand side
of~\eqref{eq:thm:quivers.Dim-Red.8e}, one needs to make another
calculation using~\eqref{eq:thm:quivers.Dim-Red.2}, choosing the basis
$\{\eta_a\}$ as in~\cite[\S 4.2.4]{AG1}, so the pointwise inner
product
\begin{equation}\label{eq:thm:quivers.Dim-Red.11}
-\Tr\imag\Lambda_{\omega_\varepsilon}(\eta_a\wedge\eta_b^*)=\delta_{ab}
\end{equation}
is the Kronecker delta for $ta=tb, ha=hb$, and the orthogonal
decomposition~\eqref{eq:thm:quivers.Dim-Red.9}.

Finally, using~\eqref{eq:thm:quivers.Dim-Red.2} and the decomposition
$\theta=\beta-\beta^*$, one can prove that
\begin{align*}
\Lambda_{\widetilde{\omega}}^2\Tr(F_{A^\circ}\wedge\theta^2)
&=-\Lambda_{\widetilde{\omega}}^2\Tr(F_{A^\circ}\wedge[\beta,\beta^*]),
\end{align*}
where $[\beta,\beta^*]=\beta\wedge\beta^*+\beta^*\wedge\beta$, because
$\Lambda_{\omega_\varepsilon}$ and $\Lambda_{\omega_\varepsilon}^2$
respectively vanish when applied to $(2,0)$ and $(0,2)$-forms, and to
$(1,3)$ and $(3,1)$-forms. To show that this is equal to the
right-hand side of~\eqref{eq:thm:quivers.Dim-Red.8f}, one has to
use~\eqref{eq:thm:quivers.Dim-Red.2} once again,
and~\eqref{eq:thm:quivers.Dim-Red.11}.

Formula~\eqref{eq:thm:quivers.Dim-Red.6c} follows
from~\eqref{eq:thm:quivers.Dim-Red.7},~\eqref{eq:thm:quivers.Dim-Red.8},
and the fact that the connections $A_\lambda$ and $A_\lambda'$ are
unitary, and so putting together the right-hand sides
of~\eqref{eq:thm:quivers.Dim-Red.8e}
and~\eqref{eq:thm:quivers.Dim-Red.8f}, we obtain
\begin{align*}
\Lambda_{\widetilde{\omega}}^2\Tr((d_{A^\circ}\theta)^2)
+2\Lambda_{\widetilde{\omega}}^2\Tr(F_{A^\circ}\wedge\theta^2)
=&-4\sum_{a\in Q_0'}\imag\Lambda_\omega d\Tr(d_{A_a}\phi_a\circ\phi_a^*)
\\&
-\sum_{a,b\in Q_0'}\Tr(\phi_a\circ\phi_b^*)\Lambda_{\omega_\varepsilon}^2
d\Tr(\eta_a\wedge d_{A_b'}\eta_b^*+d_{A_a'}\eta_a\wedge\eta_b^*).
\end{align*}

We can now compare~\eqref{eq:quivers.Equations.1b}
and~\eqref{eq:quivers.KYM.b}. By construction,
$S_\alpha(\widetilde{J},\widetilde{A})\in C^\infty(M)^K$, so
$S_\alpha(\widetilde{J},\widetilde{A})=\text{const.}$ if and only if
the last term in~\eqref{eq:thm:quivers.Dim-Red.4b} vanishes for all
$\widetilde{f}\in C_0^\infty(M)^K$, i.e. $\widetilde{f}=f\circ p_X$
with $f\in C_0^\infty(X)$, $p_X\colon M\to X$ being the canonical
projection. In this case,
\begin{align*}
\int_M\widetilde{f}S_\alpha(\widetilde{J},\widetilde{A})\frac{\widetilde{\omega}^m}{m!}
=&\Vol_\varepsilon(\GG/P)\int_Xf S_{\rho,\sigma,\tau}(J,A,\phi)\frac{\omega^n}{n!},
\end{align*}
where, adding the three identities
in~\eqref{eq:thm:quivers.Dim-Red.6}, we have
\[
S_{\rho,\sigma,\tau}(J,A,\phi)\defeq
-S_J
+\rho\sum_{\lambda\in Q_0'}(\sigma_\lambda \Lambda_\omega^2\Tr
F_{A_\lambda}^2+4\tau_\lambda\Tr(\imag\Lambda_\omega F_{A_\lambda}))
-4\rho\sum_{a\in Q_1'}\imag\Lambda_\omega d\Tr(d_{A_a}\phi_a\circ\phi_a^*).
\]
Combining this and~\eqref{eq:thm:quivers.Dim-Red.5}
in~\eqref{eq:thm:quivers.Dim-Red.3}, we see that
$\langle\widetilde{\mu}_{\cX_M}(\widetilde{J},\widetilde{A}),\widetilde{\zeta}\rangle$
equals~\eqref{eq:quiver-mm1}, up to a factor
$\Vol_\varepsilon(\GG/P)$. This implies the correspondence
for~\eqref{eq:quivers.Equations.1b} and~\eqref{eq:quivers.KYM.b}, as
required.
\end{proof}

Note that the relations in the set $\cK$ have not played any role in
the proof of Theorem~\ref{thm:quivers.Dim-Red}. 


\section{Examples}
\label{sec:examples}

\subsection{Solutions in the weak coupling limit}
\label{sub:weakcoupling}

In this section we consider the K\"ahler--Yang--Mills--Higgs equations with coupling constants $\alpha = \beta$. Assuming $\alpha > 0$ and normalizing the first equation in \eqref{eq:KYMH3}, we obtain the system of equations 
\begin{equation}\label{eq:KYMH4}
\begin{split}
\Lambda_\omega F_H + \phi^*\hat \mu & =z,\\
S_\omega + \alpha \Delta_{\omega}|\phi^* \hat \mu|^2 + \alpha \Lambda_\omega^2 (F_H\wedge F_H) - 4 \alpha (\Lambda_\omega F_H, z) & =c.
\end{split}
\end{equation}
Following \cite{AGG}, this section is concerned with the existence of solutions of \eqref{eq:KYMH4} in `weak coupling limit' $0<\lvert\alpha\rvert\ll 1$ by deforming solutions $(\omega,H)$ with coupling constants $\alpha = 0$. 

Note that for $\alpha=0$, the coupled equations~\eqref{eq:KYMH4} are the condition that $\omega$ is a constant scalar curvature K\"ahler (cscK) metric on $X$ and $H$ is a solution of the Yang--Mills--Higgs equation, as studied in \cite{MR}. If $\phi=0$, then the existence of solutions of the first equation in \eqref{eq:KYMH4} is equivalent,
by the Theorem of Donaldson, Uhlenbeck and Yau~\cite{D3,UY}, to the polystability of the holomorphic bundle $E^c$ with respect to the K\"ahler class $[\omega] \in H^2(X,\RR)$. For $\phi\neq 0$, Mundet i Riera \cite{MR} gave the following characterization of the existence of solutions.

\begin{theorem}[\cite{MR}]
\label{th:Mundet}
Assume that $\phi\neq 0$ and that $(E^c,\phi)$ is a simple pair. Then, for every fixed K\"ahler form $\omega$, there exists a solution $H$ of the Yang--Mills--Higgs equation if and only if $(E^c,\phi)$ is $z$-stable, in which case the solution is unique.
\end{theorem}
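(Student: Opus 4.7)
The plan is to establish a Kobayashi--Hitchin style correspondence for the Higgs pair $(E^c,\phi)$, splitting the argument into the easy implication (existence of a solution implies $z$-stability), the hard implication (stability implies existence), and uniqueness. The moment-map interpretation from Section~\ref{sub:Ham-action} is the organizing principle: the Yang--Mills--Higgs equation is the vanishing condition for the moment map of the $\cG$-action on $\cA\times\cS$ (with $J$ and $\omega$ fixed), so the statement fits in the general analogy between symplectic stability and GIT stability.

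For the easy direction, I would assume a solution $H$ is given and test the moment map identity $\Lambda_\omega F_H + \phi^*\hat\mu = z$ against the infinitesimal data of an arbitrary reduction $\sigma$ of $E^c$ to a parabolic subgroup $P\subset G^c$ along an antidominant character $\chi$. Chern--Weil theory expresses the pairing of the first term with $\sigma$ as the topological degree $\deg(E^c,\sigma,\chi)$, while the pairing with $\phi^*\hat\mu$ is controlled by the Mundet \emph{maximal weight} of $\phi$ relative to $(\sigma,\chi)$, a non-negative quantity that measures how far $\phi$ is from lying in the parabolic reduction. Combining the two contributions with the central term gives the $z$-slope inequality defining $z$-stability. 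Simplicity of $(E^c,\phi)$ promotes equality to strict inequality by ruling out strictly semistable reductions.

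For the hard direction, I would use a Donaldson-type functional on the space of Hermitian metrics on $E^c$, defined so that its Euler--Lagrange equation is the Yang--Mills--Higgs equation. The key analytic steps are: (i) convexity of the functional along geodesic segments $H_t=H_0 e^{ts}$ in the space of metrics, which uses the K\"ahler structure on $F$ and the Hamiltonian nature of the $G$-action; (ii) a lower bound for the functional under the $z$-stability hypothesis, proved by the standard contradiction argument --- if a minimizing sequence $H_n$ escapes to infinity, extract a weak limit of rescaled endomorphisms, apply an Uhlenbeck--Yau type weak subbundle theorem adapted to Higgs pairs to produce an algebraic destabilizing reduction, and derive a contradiction with $z$-stability using the maximal-weight formula from the easy direction; (iii) elliptic regularity to upgrade the minimizer to a smooth Hermitian metric solving the equation. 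Uniqueness follows by a standard Bochner-type calculation: if $H_1,H_2$ are two solutions and $h=H_1^{-1}H_2$, then integrating the difference of the two Yang--Mills--Higgs equations against $\log h$ and using the convexity of the Donaldson functional forces $h$ to be covariantly constant with $\phi$-compatible symmetry, so simplicity implies $h$ lies in the stabilizer and $H_1,H_2$ are gauge equivalent.

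The main obstacle is step (ii) of the hard direction. The Uhlenbeck--Yau construction of an algebraic weak subbundle from an $L^1$-weak limit of projections is delicate enough for holomorphic bundles; in the Higgs setting one must also check that the weak limit is compatible with the section $\phi$ in the right sense, so that the resulting filtration is a genuine subobject of the pair $(E^c,\phi)$ and the maximal weight of $\phi$ enters with the correct sign. This is precisely the additional analytic ingredient in~\cite{MR} beyond the classical Donaldson--Uhlenbeck--Yau argument~\cite{D3,UY}.
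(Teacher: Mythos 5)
This theorem is not proved in the paper at all: it is imported verbatim from Mundet i Riera's work and stated with the citation \cite{MR}, so there is no internal argument to compare your proposal against. Judged on its own terms, your outline is a faithful reconstruction of the strategy actually used in \cite{MR}: the moment-map framing on $\cA\times\cS$, the easy direction via pairing the equation $\Lambda_\omega F_H+\phi^*\hat\mu=z$ against parabolic reductions and antidominant characters, the hard direction via a Donaldson-type functional that is convex along geodesics in the space of metrics together with an Uhlenbeck--Yau-type destabilization argument, and uniqueness from convexity plus simplicity. You have also correctly isolated the genuinely new analytic ingredient beyond \cite{D3,UY}, namely the compatibility of the weak limit with the section $\phi$.

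Two caveats, neither fatal to the outline but both worth fixing if this were to be expanded. First, the Mundet maximal weight $\lambda(\phi;\sigma,\chi)$ is \emph{not} in general a non-negative quantity: since the fibre $F$ is allowed to be non-compact, it takes values in $\RR\cup\{+\infty\}$, and the $z$-stability condition is the positivity of the \emph{total} weight (degree term plus maximal weight), with a separate convention when the weight is infinite. Stating it as a non-negative correction term gets the structure of the inequality wrong. Second, your proposal is an outline rather than a proof: the entire analytic content of the theorem sits in your step (ii), which you name but do not carry out, and the regularity statement needed to convert an $L^2_1$ weak limit of projections into a genuine filtration of the pair $(E^c,\phi)$ is precisely where the work lies. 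As a summary of why the theorem is true and of how \cite{MR} proves it, the proposal is accurate; as a self-contained proof it is incomplete by design.
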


The conditions of simplicity and $z$-stability in the previous theorem are rather technical, and we refer the reader to \cite{MR} for a detailed definition. To give an idea in the language of Section \ref{sub:obstructions}, a sufficient condition for $(E^c,\phi)$ to be a simple pair (see \cite[Definition 2.17]{MR}) is that the Lie algebra $\Lie \Aut(E^c,\phi)$ of infinitesimal automorphisms of $(E^c,\phi)$ vanishes, where $\Lie \Aut(E^c,\phi)$ is given by the Kernel of
$$
\Lie \Aut(X,E^c,\phi) \to H^0(X,TX).
$$
The $z$-stability of the pair $(E^c,\phi)$ can regarded as a version of the geodesic stability in Definition \ref{def:geodstab}, for (weak) geodesic rays $(\omega_t,H_t)$ with $\omega_t = \omega$ constant (see \cite[Definition 2.16]{MR}).

Our next result is a consequence of the implicit function theorem in
Banach spaces, combined with Theorem \ref{th:Mundet} and the moment
map interpretation of the constant scalar curvature K\"ahler metric
equation. The proof follows along the lines of \cite[Theorem
4.18]{AGG}.

\begin{theorem}
\label{thm:DeformationCYMeq3}
Assume that $\phi\neq 0$ and that $(E^c,\phi)$ is a simple
pair. Assume that there is a cscK metric $\omega_0$ on $X$ with
cohomology class $[\omega_0] = \Omega_0$ and that there are no
non-zero Hamiltonian Killing vector fields on $X$. If $(E^c,\phi)$ is
$z$-stable with respect to $\omega_0$, then there exists an open
neighbourhood $U\subset\RR\times H^{1,1}(X,\RR)$ of $(0,\Omega_0)$
such that for all $(\alpha,\Omega)\in U$ there exists a solution of
\eqref{eq:KYMH4} with coupling constant $\alpha$ such that
$[\omega]=\Omega$.
\end{theorem}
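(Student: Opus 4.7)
The plan is to apply the implicit function theorem in suitable Banach completions of the space $B$ of pairs $(\omega,H)$, treating $(\alpha,\Omega)$ as the deformation parameters and $(\omega_0,H_0)$ as the seed solution, where $H_0$ denotes the (unique) Mundet i Riera solution of the Yang--Mills--Higgs equation on $(E^c,\phi)$ with respect to $\omega_0$, whose existence is guaranteed by Theorem~\ref{th:Mundet}. Following the strategy of \cite[Theorem~4.18]{AGG}, I would fix $k \gg 0$ and $p$ with $kp > 2n$, parametrize K\"ahler forms in a class $\Omega$ near $\Omega_0$ by potentials $\varphi \in L^{k,p}_0(X,\omega_0)$ via $\omega = \omega_0 + dd^c\varphi + \eta$, with $\eta$ ranging over a small open set in $H^{1,1}(X,\RR)$, and reductions $H$ near $H_0$ by Hermitian endomorphisms $s \in L^{k,p}(\ad E_{H_0})$ via $H = H_0 e^s$. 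The gauge symmetry of the equations is killed by requiring $s$ to be self-adjoint, and their full $\cX$-invariance is handled, as in~\cite{AGG}, by restricting attention to the complex-orbit slice and to potentials with zero integral.

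The core is then to write the left-hand sides of~\eqref{eq:KYMH4}, minus the topologically determined constants, as a smooth map
\[
\Phi\colon U_{\text{par}}\times V_{(\varphi,s)} \lto L^{k-2,p}(\ad E_{H_0})_{\mathrm{sa}} \times L^{k-4,p}_0(X,\omega_0),
\]
where $U_{\text{par}} \subset \RR \times H^{1,1}(X,\RR)$ is a neighbourhood of $(0,\Omega_0)$ and $V_{(\varphi,s)}$ is a neighbourhood of the origin in the slice. By construction $\Phi(0,\Omega_0,0,0)=0$. The next step is to analyse the partial linearization $L \defeq D_{(\varphi,s)}\Phi\vert_{(0,\Omega_0,0,0)}$. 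Because $\alpha=0$, the equations decouple: the linearization in $s$ of the first equation is the Mundet i Riera operator $\Delta_{H_0,\phi}$ governing infinitesimal deformations of Yang--Mills--Higgs pairs, while the linearization in $\varphi$ of the second equation is (up to sign) the Lichnerowicz operator $\mathcal{D}_{\omega_0}^{\ast}\mathcal{D}_{\omega_0}$ of the cscK metric $\omega_0$. The off-diagonal term $D_{\varphi}$ of the first equation contributes a lower order piece that does not affect Fredholmness, while $D_s$ of the second equation vanishes at $\alpha=0$, so $L$ is block-triangular modulo compact operators.

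Both diagonal blocks are elliptic self-adjoint Fredholm operators of index zero. By the hypothesis that $X$ has no non-zero Hamiltonian Killing vector fields, $\ker \mathcal{D}_{\omega_0}^{\ast}\mathcal{D}_{\omega_0} \cap C^\infty_0(X)=0$, so the cscK block is an isomorphism on the slice of mean-value-zero potentials. For the Yang--Mills--Higgs block, the kernel of $\Delta_{H_0,\phi}$ on self-adjoint sections is identified with the real part of $\Lie\Aut(E^c,\phi)$, which vanishes by simplicity of $(E^c,\phi)$; so this block is also an isomorphism. Consequently $L$ is an isomorphism, and the implicit function theorem in Banach spaces yields a unique $C^1$ family $(\varphi(\alpha,\Omega),s(\alpha,\Omega))$ solving $\Phi=0$ on a neighbourhood $U$ of $(0,\Omega_0)$. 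Elliptic regularity applied to the (fully nonlinear, but locally uniformly elliptic) system~\eqref{eq:KYMH4} then upgrades $(\omega(\alpha,\Omega),H(\alpha,\Omega))$ to smooth solutions, completing the proof.

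The main obstacle is verifying that $L$ really is an isomorphism in the coupled (rather than just block-diagonal) sense: one has to check that the off-diagonal coupling, arising because $\Lambda_{\omega}F_H$ depends on $\omega$ via $\Lambda_{\omega}$, is lower order with respect to the ellipticity of the diagonal blocks, and that the decomposition of tangent vectors into $(\dot\varphi,\dot s)$ correctly respects both the moment-map slicing and the subtraction of the topological constant in~\eqref{eq:constant-c}. All these points can be handled exactly as in the analogous argument in~\cite[\S 4]{AGG}, with the K\"ahler--Yang--Mills operator replaced by the Yang--Mills--Higgs operator $\Delta_{H_0,\phi}$ and its kernel controlled by Theorem~\ref{th:Mundet} together with the simplicity hypothesis.
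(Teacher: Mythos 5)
Your proposal follows exactly the route the paper takes (and leaves mostly implicit): perturb off the seed solution $(\omega_0,H_0)$ --- with $H_0$ supplied by Theorem~\ref{th:Mundet} via simplicity and $z$-stability --- using the implicit function theorem in Banach spaces along the lines of \cite[Theorem~4.18]{AGG}, with the decoupled linearization at $\alpha=0$ controlled by the Lichnerowicz operator (invertible on mean-zero potentials by the no-Hamiltonian-Killing-fields hypothesis) and the Yang--Mills--Higgs operator (invertible by simplicity). Your sketch is a correct and reasonably complete filling-in of the details the paper only gestures at.
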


We next provide an application of the previous theorem to the
K\"ahler--Yang--Mills equations. Using the notation of
Section~\ref{sub:quivers.DimRed}, we fix a K\"ahler form $\omega_0$ on
$X$, a $K$-invariant K\"ahler form $\omega_\varepsilon$ on $\GG/P$
(with $\varepsilon\in\RR_{>0}^\Sigma$), and the product K\"ahler form
$\widetilde{\omega}_0 =\omega_0+\omega_\varepsilon$ on $M=X\times
\GG/P$. Let $\Omega_0=[\omega_0]$,
$\Omega_\varepsilon=[\omega_\varepsilon]$ and
$\widetilde{\Omega}_0=[\widetilde{\omega}_0]=\Omega_0+\Omega_\varepsilon$
be their cohomology classes on $X$, $\GG/P$ and $M$, respectively. We
also fix a $\GG$-equivariant holomorphic vector bundle $\widetilde{E}$
on $M$, and say $\widetilde{E}$ is \emph{$\GG$-invariantly stable}
(with respect to $\widetilde{\Omega}_0$) if for all $\GG$-invariant
proper subsheaves $\widetilde{E}'\subset\widetilde{E}$, their slopes
with respect to $\widetilde{\Omega}_0$ satisfy
$\mu_{\widetilde{\Omega}_0}(\widetilde{E}') <
\mu_{\widetilde{\Omega}_0}(\widetilde{E})$ (cf.~\cite[Definition~4.6,
\S 4.1.2]{AG1}).

\begin{corollary}\label{thm:quivers.Existence-KYM}
Assume that $\omega_0$ is a constant scalar curvature K\"ahler 
metric on $X$, there are no non-zero Hamiltonian Killing vector fields
on $X$, and $\widetilde{E}$ is $\GG$-invariantly stable
with respect to $\Omega_0$. Then there exists an open neighbourhood
$U\subset\RR\times H^{1,1}(X,\RR)$ of $(0,\Omega_0)$ such that for all
$(\alpha,\Omega)\in U$, there exists a $K$-invariant solution
$(\widetilde{\omega},\widetilde{H})$ of the K\"ahler--Yang--Mills
equations~\eqref{eq:quivers.KYM} on $M$ with coupling constant
$\alpha$ such that $[\widetilde{\omega}]=\Omega+\Omega_\varepsilon$.
\end{corollary}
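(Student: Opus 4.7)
The plan is to combine the dimensional reduction in Theorem~\ref{thm:quivers.Dim-Red} with the weak-coupling existence result of Theorem~\ref{thm:DeformationCYMeq3}. Let $(E,\phi)$ be the holomorphic $Q$-bundle on $X$ associated to the $\GG$-equivariant vector bundle $\widetilde{E}$ under the equivalence of~\cite[Theorem~2.5]{AG1}, $Q'\subset Q$ the finite full subquiver on $\{\lambda\in Q_0:E_\lambda\neq 0\}$, and $P^c$ the principal $\prod_{\lambda\in Q_0'}\operatorname{GL}(r_\lambda,\CC)$-bundle built from the $E_\lambda$, so that $\phi$ becomes a holomorphic Higgs field valued in the K\"ahler fibre described in Section~\ref{sub:quivers.Equations}. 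By Theorem~\ref{thm:quivers.Dim-Red}, a $K$-invariant pair $(\widetilde{\omega},\widetilde{H})$ with $\widetilde{\omega}=\omega+\omega_\varepsilon$ solves~\eqref{eq:quivers.KYM} on $M$ with coupling $\alpha$ if and only if the corresponding $(\omega,H)$ solves the gravitating quiver $(\rho,\sigma,\tau)$-vortex equations~\eqref{eq:quivers.Equations.1} on $(E,\phi)$ with $\rho=\alpha$ and $(\sigma,\tau)$ given by~\eqref{eq:quivers.Parameters.1}; and by the moment-map calculation of Section~\ref{sub:quivers.Equations}, these are precisely the K\"ahler--Yang--Mills--Higgs equations~\eqref{eq:KYMH4} on the triple $(X,P^c,\phi)$ with coupling $\alpha$ and a specific central element $z$ determined by $(\sigma,\tau)$. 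Hence the corollary will follow if one applies Theorem~\ref{thm:DeformationCYMeq3} to $(X,P^c,\phi)$ at $(\alpha,\Omega)=(0,\Omega_0)$ and then pushes the resulting family of solutions back to $M$.

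Of the hypotheses of Theorem~\ref{thm:DeformationCYMeq3}, the existence of a cscK metric $\omega_0\in\Omega_0$ and the vanishing of the space of Hamiltonian Killing fields on $(X,\omega_0)$ are assumed directly. The remaining conditions---$\phi\neq 0$ and simplicity plus $z$-stability of $(P^c,\phi)$ with respect to $\omega_0$---must be extracted from the $\GG$-invariant stability of $\widetilde{E}$ with respect to $\widetilde{\Omega}_0=\Omega_0+\Omega_\varepsilon$. The key input is the $\GG$-equivariant Hitchin--Kobayashi correspondence developed in~\cite[\S 4]{AG1}, which matches the $\GG$-invariant slope stability of $\widetilde{E}$ with the $(\sigma,\tau)$-stability of the quiver bundle $(E,\phi)$. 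Under the moment-map interpretation in Section~\ref{sub:quivers.Equations}, $(\sigma,\tau)$-stability of $(E,\phi)$ coincides with Mundet i Riera's $z$-stability of $(P^c,\phi)$ for the precise central element $z$ prescribed by $(\sigma,\tau)$, so Theorem~\ref{th:Mundet} provides the $z$-polystability at $\omega_0$. Strict stability of $\widetilde{E}$ further forbids any $\GG$-invariant proper reflexive subsheaf of the same slope, which on the quiver side precludes any proper sub-Higgs-pair of $(P^c,\phi)$ of the same $z$-slope; this simultaneously upgrades polystability to strict $z$-stability and yields simplicity of $(P^c,\phi)$. The condition $\phi\neq 0$ is also forced, since $\phi=0$ would entail a $\GG$-equivariant splitting $\widetilde{E}\cong\bigoplus_\lambda E_\lambda\boxtimes\cO_\lambda$, whose summands would generically destabilise $\widetilde{E}$ in $\widetilde{\Omega}_0$.

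With all hypotheses verified, Theorem~\ref{thm:DeformationCYMeq3} yields an open neighbourhood $U\subset\RR\times H^{1,1}(X,\RR)$ of $(0,\Omega_0)$ such that for each $(\alpha,\Omega)\in U$ there is a solution $(\omega_\alpha,H_\alpha)$ of~\eqref{eq:quivers.Equations.1} with $[\omega_\alpha]=\Omega$, and applying Theorem~\ref{thm:quivers.Dim-Red} in the reverse direction produces the required $K$-invariant solution $(\widetilde{\omega},\widetilde{H})$ of~\eqref{eq:quivers.KYM} on $M$ with $[\widetilde{\omega}]=\Omega+\Omega_\varepsilon$. The main technical obstacle is the careful translation of stability notions: one must verify term-by-term that $\GG$-invariant Mumford filtrations of $\widetilde{E}$ over $(M,\widetilde{\omega}_0)$ correspond to filtrations of $(E,\phi)$ by sub-$Q'$-bundles weighted by the parameters $(\sigma,\tau)$ of~\eqref{eq:quivers.Parameters.1}, so that the numerical stability conditions match exactly; once this bookkeeping (developed in~\cite{AG1}) is in place, the weak-coupling deformation of Theorem~\ref{thm:DeformationCYMeq3} does the rest.
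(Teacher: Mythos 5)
Your argument is essentially the paper's: its proof of this corollary is a two-line reduction citing Theorem~\ref{thm:quivers.Dim-Red}, Theorem~\ref{thm:DeformationCYMeq3} and the correspondences of~\cite[\S 4]{AG1}, the only explicit extra datum being the choice $\alpha=\beta=\rho$ with the inner product~\eqref{eq:quivers.SymfC} weighted by $\alpha_\lambda=\dim_\CC M_\lambda$, which your ``central element determined by $(\sigma,\tau)$'' captures, and your stability bookkeeping is precisely what the paper delegates to~\cite{AG1}. The one imprecision is your claim that $\phi\neq 0$ is forced by $\GG$-invariant stability: this fails when the quiver bundle is supported at a single vertex (there $\phi=0$ yet $\widetilde{E}$ can still be invariantly stable), though in that degenerate case the statement reduces to the Higgs-free weak-coupling result, so the conclusion is unaffected.
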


\begin{proof}
This follows from Theorems~\ref{thm:DeformationCYMeq3}
and~\ref{thm:quivers.Dim-Red}, and the correspondences of~\cite[\S
4]{AG1}. To apply Theorem~\ref{thm:DeformationCYMeq3}, we consider
the holomorphic $Q$-bundle $(E,\phi)$ over $X$ corresponding to
$\widetilde{E}$, and the symplectic form~\eqref{eq:symplecticT} given
by $\omega_\cJ+4\rho\omega_\cA+4\rho\omega_\cS$, i.e., with
$\alpha=\beta$ both equal to $\rho$, where $\omega_\cA$ is now defined
using the invariant inner product~\eqref{eq:quivers.SymfC} with
$\alpha_\lambda=\dim_\CC M_\lambda$.
\end{proof}

Note that this result is not covered by~\cite[Theorem~4.18]{AGG},
since the infinitesimal action by any non-zero element of
$\LieGG/\mathfrak{p}\cong T_P(\GG/P)$ induces a nowhere-vanishing real
holomorphic vector field over $X\times \GG/P$ (where
$\mathfrak{p}\subset\LieGG$ are the Lie algebras of $P\subset \GG$,
respectively).

To illustrate further the scope of application of Theorem
\ref{thm:DeformationCYMeq3}, consider now a compact Riemann surface
$\Sigma$ with genus $g(\Sigma)> 1$, endowed with a K\"ahler metric
$\omega_0$ with constant curvature $-1$. We fix a holomorphic
principal $G^c$-bundle over $\Sigma$ and consider a unitary
representation $\rho \colon G\to\U(W)$, for a hermitian vector space
$W$. We take $F = \mathbb{P}(W)$, endowed with the Fubini--Study
metric, rescaled by a real constant $\tau >0$. Consider the associated
ruled manifold
$$
\mathcal{F} = E^c \times_{G^c} F = \mathbb{P}(E^c \times_{G^c} W).
$$
Denote by $\mathbb{P}(W)^s \subset \mathbb{P}(W)$ the locus of stable points for the linearized $G^c$-action, and set
$$
\mathcal{F}^s = E^c \times_{G^c} \mathbb{P}(W)^s \subset \cF.
$$
Then, if $E^c$ is semistable with respect the K\"ahler class $[\omega_0]$ and $\phi \in H^0(\Sigma,\cF)$ is such that $\phi(\Sigma) \subset \cF^s$, then $(E^c,\phi)$ is $z$-stable, for any $z$ and any value of $\tau$ (see \cite[p. 74]{MR1}). Furthermore, we can also choose $\phi$ such that the pair is simple, by taking its image outside any proper $G^c$-invariant subspace $W' \subset W$.

For the sake of concreteness, consider the case that $G^c=\GL(r,\CC)$
and $\rho$ is the standard representation in $W = \mathbb{C}^r$. Then
$V = E^c \times_{G^c} W$ is a holomorphic vector bundle and $\phi \in
H^0(\Sigma,\cF)$ can be identified with the inclusion
$$
L \subset V
$$
for a holomorphic subbundle $L$ of rank one, as considered by Bradlow and the third author in \cite{BrGP}. Then, the pair $(E^c,\phi)$ is not simple if and only if one can find a holomorphic splitting
$$
V = V' \oplus V''
$$
such that $L$ is contained in $V'$. Identifying $z = -i \lambda \Id \in \mathfrak{u}(r)$ for a real constant $\lambda \in \RR$, the pair $(E^c,\phi)$ is $z$-stable if and only if for any non-zero proper subbundle $V' \subset V$ we have
$$
\frac{\deg(V') + \tau \rk(L \cap V') }{\rk(V')} < \frac{\deg(V) + \tau}{r}.
$$
In this simple situation, the equations \eqref{eq:KYMH4} are for a K\"ahler metric $\omega$ on $\Sigma$ and a hermitian metric $H$ on $V$, and reduce to
\begin{equation*}
\begin{split}
i\Lambda_\omega F_H + \tau \pi_L^H & = \lambda \Id,\\
S_\omega + \alpha \tau^2 \Delta_{\omega}|\pi_L^H|^2 & = c',
\end{split}
\end{equation*}
for a suitable real constant $c' \in \RR$, where $\pi_L^H \colon V \to L$ denotes the $H$-orthogonal projection.

\subsection{Gravitating vortices and Yang's Conjecture}
\label{sub:gravvortex}

In this section we apply Theorem \ref{th:Matsushima-type} to find an
obstruction to the \emph{gravitating vortex equations} on the Riemann
sphere, as introduced in \cite{AGG2}. As an application, we give an
alternative affirmative answer to Yang's Conjecture for the
Einstein--Bogomol'nyi equations \cite{Yang3} (see also
\cite[p. 437]{Yangbook}), that shall be compared with the original
proof in \cite[Corollary 4.7]{AGGP}.

Consider $X = \PP^1$, with $G^c = \CC^*$ and $F = \CC$, endowed with the standard hermitian structure. A $\CC^*$-principal bundle on $\PP^1$ is equivalent to a line bundle $\cO_{\mathbb{P}^1}(N)$ of degree $N$, while the Higgs field is $\phi \in H^0(\PP^1,\cO_{\mathbb{P}^1}(N))$. Here we are concerned with the case $\phi \neq 0$, so we assume $N > 0$. Choose a real constant $\tau > 0$, and consider $z = - i \alpha \tau/2$. Then, the K\"ahler--Yang--Mills--Higgs equations \eqref{eq:KYMH3} with coupling constants $\alpha = \beta > 0$ are equivalent to the  gravitating vortex equations \cite{AGG2}
\begin{equation}\label{eq:gravvortexeq1}
\begin{split}
i\Lambda_\omega F_H + \frac{1}{2}(|\phi|_H^2-\tau) & = 0,\\
S_\omega + \alpha(\Delta_\omega + \tau) (|\phi|_H^2 -\tau) & = c,
\end{split}
\end{equation}
where $\omega$ is a K\"ahler metric on $\PP^1$ and $H$ is a hermitian metric on $\cO_{\mathbb{P}^1}(N)$. The constant $c \in \RR$ is topological,
and is explicitly given by
\begin{equation}\label{eq:constantc}
c = 2 - 2\alpha\tau N,
\end{equation}
where have assumed the normalization $\int_{\PP^1}\omega = 2 \pi$.

The first equation in \eqref{eq:gravvortexeq1} is the \emph{abelian vortex equation}. A theorem by Noguchi~\cite{Noguchi}, Bradlow~\cite{Brad} and the third author~\cite{G1,G3} implies that, upon a choice of K\"ahler metric with volume $2\pi$, the equation
$$
i\Lambda_\omega F_H + \frac{1}{2}(|\phi|_H^2-\tau) = 0
$$
admits a (unique) solution provided that $N < \tau/2$. As we will show next, this numerical condition is not enough to ensure the existence of solutions of the coupled system \eqref{eq:gravvortexeq1}.

\begin{theorem}\label{th:Yangconjecture}
If $\phi$ has only one zero, 
then there are no solutions of the gravitating vortex equations for
$(\PP^1,L,\phi)$.
\end{theorem}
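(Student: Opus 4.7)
The plan is to apply the Matsushima--Lichnerowicz-type obstruction (Theorem~\ref{th:Matsushima-type}) by showing that $\Lie\Aut(\PP^1,L,\phi)$ fails to be reductive. The hypotheses of that theorem are readily checked: $H^1(\PP^1,\RR)=0$, and by the discussion at the beginning of Section~\ref{sub:gravvortex} the gravitating vortex equations~\eqref{eq:gravvortexeq1} are precisely the K\"ahler--Yang--Mills--Higgs equations~\eqref{eq:KYMH3} in the present setting with coupling constants $\alpha=\beta>0$. So once non-reductivity of the automorphism algebra is established, Theorem~\ref{th:Matsushima-type} immediately rules out any solution.

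To identify $\Lie\Aut(\PP^1,L,\phi)$, I would start from the Atiyah-type sequence
\[
0 \to H^0(\PP^1,\cO) \to \Lie\Aut(\PP^1,L) \to H^0(\PP^1,T\PP^1) \to 0,
\]
with $H^0(\PP^1,\cO)=\CC$ (infinitesimal gauge scalings) and $H^0(\PP^1,T\PP^1)=\mathfrak{sl}(2,\CC)$. Since $\SL(2,\CC)$ acts on $L=\cO_{\PP^1}(N)$ by the standard lift, the sequence splits canonically, and any $\zeta \in \Lie\Aut(\PP^1,L)$ decomposes as $(s,Y) \in \CC \oplus \mathfrak{sl}(2,\CC)$. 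Because $\phi\neq 0$ has a single zero $p\in\PP^1$ and $\deg L = N$, $\phi$ vanishes to order exactly $N$ at $p$. The condition that $\zeta$ fix $\phi$ reads
\[
s\,\phi + \mathcal{L}_Y\phi = 0.
\]

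Now comes the key dichotomy. If $Y(p)\neq 0$, then $\mathcal{L}_Y\phi$ vanishes at $p$ to order exactly $N-1$, so it cannot be a scalar multiple of $\phi$, and the displayed equation has no solution in $s$. Hence necessarily $Y(p)=0$, i.e.\ $Y$ belongs to the Borel subalgebra $\mathfrak{b}_p\subset\mathfrak{sl}(2,\CC)$ stabilising $p$. For such $Y$, $\mathcal{L}_Y\phi = c(Y)\phi$ for a unique $c(Y)\in\CC$ (since the line of sections vanishing to order $N$ at $p$ is one-dimensional), and the stabiliser condition forces $s=-c(Y)$. Therefore the projection $\Lie\Aut(\PP^1,L,\phi)\to \mathfrak{b}_p$ is an isomorphism of Lie algebras.

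Finally, $\mathfrak{b}_p$ is the $2$-dimensional non-abelian solvable Lie algebra, with basis $\{h,e\}$ and $[h,e]=e$. It has trivial centre and non-zero derived subalgebra $\CC\cdot e$, so a decomposition of the form $Z(\mathfrak{g})\oplus[\mathfrak{g},\mathfrak{g}]$ with $[\mathfrak{g},\mathfrak{g}]$ semisimple is impossible; in particular $\mathfrak{b}_p$ is not reductive. Theorem~\ref{th:Matsushima-type} then excludes the existence of a solution of \eqref{eq:gravvortexeq1}. The only subtle step is the vanishing-order bookkeeping for $\mathcal{L}_Y\phi$ used to show $Y(p)=0$; everything else is a direct application of the moment-map obstruction already established in the paper.
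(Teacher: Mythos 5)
Your proposal is correct and follows essentially the same route as the paper: establish that $\Lie\Aut(\PP^1,\cO_{\PP^1}(N),\phi)$ is the two-dimensional non-abelian solvable algebra (the paper cites \cite[Lemma 4.3]{AGGP} for the identification $\Aut(\PP^1,\cO_{\PP^1}(N),\phi)\cong\CC^*\rtimes\CC$, whereas you rederive it via the Atiyah sequence and the vanishing-order argument at the single zero) and then invoke Theorem~\ref{th:Matsushima-type}. The extra bookkeeping you supply — that $Y(p)\neq 0$ forces $\mathcal{L}_Y\phi$ to vanish to order exactly $N-1$, hence $Y\in\mathfrak{b}_p$, and that sections of $\cO_{\PP^1}(N)$ vanishing to order $N$ at $p$ form a line — is sound and simply makes self-contained the lemma the paper quotes.
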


\begin{proof}
Choose homogeneous coordinates $[x_0,x_1]$ on
$\PP^1$
such that $\phi$ is identified with the polynomial
\[
\phi\cong x_0^N.
\]
Here we use the natural identification $H^0(\mathbb{P}^1,L)\cong S^N(\CC^2)^*$, where the right hand side is the space of degree $N$ homogeneous polynomials in the coordinates $x_0,x_1$. By \cite[Lemma 4.3]{AGGP}, it follows that
\[
\Aut(\PP^1,\cO_{\mathbb{P}^1}(N),\phi)\cong\CC^*\rtimes\CC,
\]
which is non-reductive. Consequently, the proof follows from Theorem~\ref{th:Matsushima-type}.
\end{proof}

When the constant $c$ in~\eqref{eq:constantc} is zero, the gravitating vortex equations~\eqref{eq:gravvortexeq1} turn out to be a system of partial differential equations that have been extensively studied in the physics literature, known as the \emph{Einstein--Bogomol'nyi equations}. Based on partial results in \cite{Yang3}, Yang posed a conjecture about non-existence of solutions of the Einstein--Bogomol'nyi equations with $\phi$ having exactely one zero. This conjecture has been recently settled in the affirmative  in \cite{AGGP}. As an application of Theorem \ref{th:Yangconjecture}, we provide here an alternative proof.

\begin{corollary}[{Yang's conjecture}]
\label{cor:YangConjecture}
There is no solution of the Einstein--Bogomol'nyi equations for $\phi$ having exactly one zero.
\end{corollary}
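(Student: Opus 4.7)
The plan is to observe that Corollary~\ref{cor:YangConjecture} is an immediate specialization of Theorem~\ref{th:Yangconjecture}, so essentially no additional work is required beyond identifying the Einstein--Bogomol'nyi equations as a particular instance of the gravitating vortex equations~\eqref{eq:gravvortexeq1}.

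First I would recall that the Einstein--Bogomol'nyi equations are, by definition, the gravitating vortex equations~\eqref{eq:gravvortexeq1} on $(\PP^1,L,\phi)$ with vanishing topological constant $c=0$, which by~\eqref{eq:constantc} corresponds to the relation $\alpha\tau N = 1$ between the coupling constants, the symmetry breaking parameter, and the degree. In particular, a solution of the Einstein--Bogomol'nyi equations with $\phi$ having exactly one zero would be a solution of the gravitating vortex equations~\eqref{eq:gravvortexeq1} with coupling constant $\alpha>0$ (namely $\alpha = 1/(\tau N)$) and with $\phi$ having exactly one zero.

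Next I would invoke Theorem~\ref{th:Yangconjecture}, which was proved for \emph{arbitrary} coupling constant $\alpha > 0$ and $\tau > 0$ via Theorem~\ref{th:Matsushima-type}: if $\phi$ has a single zero, then up to the $\SL(2,\CC)$-action we may assume $\phi \cong x_0^N$, and by~\cite[Lemma 4.3]{AGGP} the complex Lie algebra $\Lie\Aut(\PP^1,\cO_{\PP^1}(N),\phi) \cong \CC^* \rtimes \CC$ fails to be reductive, contradicting the Matsushima--Lichnerowicz type obstruction. Since this rules out the existence of solutions for every positive $\alpha$, it rules them out in particular at the value $\alpha = 1/(\tau N)$ that produces the Einstein--Bogomol'nyi case.

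There is no serious obstacle here; the only point to verify is that the specialization $c=0$ is consistent with the hypotheses of Theorem~\ref{th:Yangconjecture} (it is, since that theorem makes no assumption on $c$), so the corollary follows at once.
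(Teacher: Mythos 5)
Your proposal is correct and matches the paper's (implicit) argument exactly: the Einstein--Bogomol'nyi equations are the gravitating vortex equations~\eqref{eq:gravvortexeq1} with $c=0$ (equivalently $\alpha\tau N=1$ by~\eqref{eq:constantc}), and Theorem~\ref{th:Yangconjecture} excludes solutions for every $\alpha>0$, hence in particular for this value. No gaps.
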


\subsection{Non-abelian vortices on $\mathbb{P}^1$}
\label{sub:nonabelianvortex}

We consider now the case of non-abelian rank-two vortices on the Riemann sphere (corresponding to $G=\U(2)$). 

Let $X=\PP^1$, with $G^c=\GL(2,\CC)$ and $F=\CC^2$, endowed with the
standard hermitian structure. A $G^c$-principal bundle on $\PP^1$ is
equivalent to a split rank-two bundle
\[
V = \cO_{\mathbb{P}^1}(N_1) \oplus \cO_{\mathbb{P}^1}(N_2),
\]
while the Higgs field is 
\[
\phi = (\phi_1,\phi_2) \in H^0(\PP^1,\cO_{\mathbb{P}^1}(N_1)) \oplus H^0(\PP^1,\cO_{\mathbb{P}^1}(N_2)).
\]
We will assume $0 < N_1 \leqslant N_2$. Choose a real constant $\tau > 0$, and consider the central element $z = - i (\alpha \tau/2) \Id$. Then, the K\"ahler--Yang--Mills--Higgs equations \eqref{eq:KYMH3} with coupling constants $\alpha = \beta > 0$ are equivalent to
\begin{equation}\label{eq:gravvortexeqnonab}
\begin{split}
i\Lambda_\omega F_{H} + \frac{1}{2}\phi \otimes \phi^{*_H} & = \frac{\tau}{2} \Id,\\
S_\omega + \alpha(\Delta_\omega + \tau) (|\phi|_{H}^2 - 2\tau) & = c,
\end{split}
\end{equation}
where $\omega$ is a K\"ahler metric on $\PP^1$ and $H$ is a hermitian metric on $V$. The constant $c \in \RR$ is topological,
and is explicitly given by
\begin{equation}\label{eq:constantc2}
c = 2 - 2\alpha\tau (N_1+N_2),
\end{equation}
where have assumed the normalization $\int_{\PP^1}\omega = 2 \pi$.

The first equation in \eqref{eq:gravvortexeqnonab} is the \emph{non-abelian vortex equation}, as studied in \cite{Brad2}. Applying \cite[Theorem 2.1.6]{Brad2} we obtain that this equation admits a solution provided that
\begin{equation}\label{eq:ineq}
N_2 < \frac{\tau}{2} < N_1 + N_2 - \deg([\phi]),
\end{equation}
where $[\phi]$ denotes the line bundle given by the saturation of the image of $\phi \colon \mathcal{O}_{\PP^1} \to V$. We want to show next that condition \eqref{eq:ineq} is not sufficient to solve the full system of equations \eqref{eq:gravvortexeqnonab}. For this, we will apply the Futaki invariant in Proposition \ref{prop:futakibis}. Fix homogeneous coordinates $[x_0,x_1]$ , so that $H^0(\PP^1,\cO_{\mathbb{P}^1}(N_j))\cong S^{N_j}(\CC^2)^*$ is the space of degree $N_j$ homogeneous polynomials in $x_0,x_1$. Following \cite{AGGP}, consider
\begin{equation}\label{eq:phi-Einstein-Bogomonyi}
\phi_j = x_0^{N- \ell_j}x_1^{\ell_j}, \qquad 
\end{equation}
with $0\leq\ell_j<N_j$ (the case $\ell_1 = \ell_2 =0$ corresponds to a Higgs field
$\phi$ that has only one zero). In this case, it can be easily checked that the numerical condition \eqref{eq:ineq} reduces to
\begin{equation}\label{eq:ineq2}
N_2 < \frac{\tau}{2} < N_1 + N_2 - \min\{\ell_1,\ell_2\} - \min\{N_1 - \ell_1,N_2 - \ell_2\},
\end{equation}
and, by choosing suitable values of the parameters $\tau, N_j$, and $\ell_j$, the non-abelian vortex equation admits a solution. To evaluate the Futaki invariant, note that 
the Lie algebra element
\begin{equation}\label{eq:y}
y = \( \begin{array}{cc}
0 & 0 \\
0 & 1
\end{array} \) \in \mathfrak{gl}(2,\CC)
\end{equation}
can be identified with an element in $\Lie\Aut(\PP^1,V,\phi)$ for any choice of $\ell_j$ as before.

\begin{lemma}\label{lem:evaluationfutaki}
\begin{equation}
\langle \cF_{\alpha,\alpha},y\rangle = 2\pi i\alpha(2N_1-\tau)(2\ell_1 - N_1) + 2\pi i\alpha(2N_2-\tau)(2\ell_2 - N_2)
\end{equation}
\end{lemma}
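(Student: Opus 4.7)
The plan is to evaluate the Futaki character on a convenient representative, using its independence from the choice of pair $(\omega,H)\in B$ proved in Proposition~\ref{prop:futakibis}. I would take $\omega$ to be the Fubini--Study K\"ahler form on $\PP^1$ normalised so that $\int_{\PP^1}\omega=2\pi$, and the split Hermitian metric $H=H_1\oplus H_2$ with each $H_j$ the $\U(1)$-invariant Fubini--Study metric on $\cO_{\PP^1}(N_j)$. Then $F_H$ is block-diagonal and constant, with $i\Lambda_\omega F_{H_j}=N_j$, the Fubini--Study scalar curvature $S_\omega$ is constant, and $(\Lambda_\omega F_H,z)$ is constant. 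Moreover, $F_H\wedge F_H$ is a $4$-form on a real $2$-dimensional manifold, hence vanishes identically, so $\Lambda_\omega^2(F_H\wedge F_H)=0$. Since the Hamiltonian potential $\varphi$ has zero mean, all the constant $\varphi$-integrands drop out and \eqref{eq:alphafutakismooth} collapses to
\begin{equation*}
\langle\cF_{\alpha,\alpha},y\rangle = 4\int_{\PP^1}(A_Hy,\alpha\Lambda_\omega F_H+\alpha\phi^*\hat\mu-\alpha z)\,\omega - \alpha\int_{\PP^1}\varphi\,\Delta_\omega|\phi^*\hat\mu|^2\,\omega.
\end{equation*}

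The next step is to identify $y$ explicitly. I would equip $V=\cO(N_1)\oplus\cO(N_2)$ with the unique $\GL(2,\CC)$-equivariant structure, twisted by appropriate characters on each summand, for which the natural lift of $y=\mathrm{diag}(0,1)\in\mathfrak{gl}(2,\CC)$ preserves the Higgs field $\phi=(\phi_1,\phi_2)$ of the stated form. The covered holomorphic vector field $\check y$ on $\PP^1$ is the generator of the $\CC^*$-action $[x_0\!:\!x_1]\mapsto[x_0\!:\!e^tx_1]$; in its real-holomorphic decomposition $\check y=\eta_{\varphi_1}+J\eta_{\varphi_2}$ one finds $\varphi_1=0$ and $\varphi_2=\mu$ where $\mu$ is the zero-mean Fubini--Study moment map of the associated circle rotation, so $\varphi=i\mu$ is purely imaginary. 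With respect to the Chern connection of $H$, the vertical part $A_Hy$ decomposes diagonally along the splitting of $V$, so all subsequent pairings split summand by summand over $j=1,2$.

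The computational core is then the evaluation of the two remaining integrals on $\PP^1$. In the first, taking the trace against the diagonal $A_Hy$ reduces the integrand to a linear combination of constants and of the individual fibre norms $|\phi_j|_{H_j}^2$; the elementary $L^2$-integrals $\int_{\PP^1}|\phi_j|_{H_j}^2\omega$ are standard Beta-function integrals in the affine coordinate $z=x_1/x_0$. For the second integral, I would integrate by parts to rewrite it as $\int_{\PP^1}(\Delta_\omega\varphi)\,|\phi^*\hat\mu|^2\,\omega$, use the identity $|\phi^*\hat\mu|^2=\tfrac14|\phi|_H^4=\tfrac14(|\phi_1|_{H_1}^2+|\phi_2|_{H_2}^2)^2$, and exploit that $\mu$ is an eigenfunction of $\Delta_\omega$ for the Fubini--Study metric to reduce everything to the weighted moments $\int_{\PP^1}\mu\,|\phi_j|_{H_j}^2\omega$ and their product-weighted counterparts. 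The unweighted norms combined with the central element $z=-i\alpha\tau/2\,\Id$ produce the factors $(2N_j-\tau)$, while the weighted moments produce the factors $(2\ell_j-N_j)$ measuring the imbalance of the Higgs zeros between the two fixed points $[1\!:\!0]$ and $[0\!:\!1]$. The cross terms $|\phi_1|_{H_1}^2|\phi_2|_{H_2}^2$ coming from the expansion of $|\phi|_H^4$ cancel between the two integrals, yielding the claimed factorisation
\[
\langle\cF_{\alpha,\alpha},y\rangle = 2\pi i\alpha\sum_{j=1,2}(2N_j-\tau)(2\ell_j-N_j).
\]
The main technical obstacle I foresee is the careful bookkeeping of signs and normalisations: pinning down the precise $\GL(2,\CC)$-equivariant twist on $V$ that realises $y$ as an element of $\Lie\Aut(\PP^1,V,\phi)$, computing the induced vertical component $A_Hy$ against the Chern connection of $H$, and verifying that the combinatorial cancellations between the first and second integrals conspire to produce the clean factorisation $(2N_j-\tau)(2\ell_j-N_j)$ rather than a more tangled polynomial in $N_j,\ell_j,\tau$.
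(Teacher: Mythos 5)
Your overall strategy is exactly the paper's: the proof given there is literally ``direct evaluation of the Futaki invariant using the Fubini--Study metric on $\PP^1$ and the product ansatz $H=H_1\oplus H_2$, with $H_j$ the Fubini--Study hermitian metric on $\cO_{\mathbb{P}^1}(N_j)$'', following \cite[Lemma 4.6]{AGGP}, and your reductions (constant integrands drop because $\varphi$ has zero mean, $F_H\wedge F_H=0$ on a curve, $\varphi=i\mu$ with $\mu$ the zero-mean Hamiltonian of the rotation and an eigenfunction of $\Delta_\omega$, reduction to the moments $\int_{\PP^1}|\phi_j|^2_{H_j}\omega$ and $\int_{\PP^1}\mu\,|\phi_j|^2_{H_j}\omega$) are all the right ingredients.

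There is, however, one step that would derail the computation as you have set it up: the identification $|\phi^*\hat\mu|^2=\tfrac14|\phi|_H^4$ together with the claim that the resulting cross terms cancel against the first integral. The first integral is only quadratic in $\phi$ (it pairs the diagonal $A_Hy$ with $\phi^*\hat\mu=-\tfrac{i}{2}\,\phi\otimes\phi^{*_H}$), so it produces no cross terms at all, while the weighted cross moment $\int_{\PP^1}(\Delta_\omega\mu)\,|\phi_1|^2_{H_1}|\phi_2|^2_{H_2}\,\omega$ coming from the expansion of $|\phi|_H^4$ is generically nonzero (already for $\ell_1=\ell_2=0$ it is a nonvanishing Beta-type integral); nothing is available to cancel it, and its presence would destroy the diagonal factorisation $\sum_j(2N_j-\tau)(2\ell_j-N_j)$. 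The resolution is that the term written as $\beta\Delta_\omega|\phi^*\hat\mu|^2$ in \eqref{eq:alphafutakismooth} must be taken in the form it assumes in the specialised equations \eqref{eq:gravvortexeqnonab}, namely $\alpha(\Delta_\omega+\tau)(|\phi|^2_H-2\tau)$ --- quadratic, not quartic, in $\phi$; equivalently, one should evaluate the primitive expression $\hat\omega(d_A\phi,d_A\phi)-2\langle\phi^*\hat\mu,F_A\rangle$ of Proposition~\ref{lem:mmapc} and use the Weitzenb\"ock-type identity relating $\Delta_\omega|\phi|^2_H$ to $|d_A\phi|^2$ and $\langle i\Lambda_\omega F_H\phi,\phi\rangle$ for $\dbar_A\phi=0$. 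With the quadratic term everything splits summand by summand over $j=1,2$ with no cross terms, and your evaluation of the unweighted and $\mu$-weighted moments then yields the factors $(2N_j-\tau)$ and $(2\ell_j-N_j)$ exactly as you describe.
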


The proof follows along the lines of \cite[Lemma 4.6]{AGGP}, by direct
evaluation of the Futaki invariant using the Fubini--Study metric on
$\mathbb{P}^1$ and the product ansatz $H = H_1 \oplus H_2$, with $H_j$
the Fubini--Study hermitian metric on the line bundle
$\cO_{\mathbb{P}^1}(N_j)$.

As a direct consequence of Proposition \ref{prop:futakibis} and the
previous lemma, we obtain the following.

\begin{theorem}\label{th:Yangconjecture.2}
Let $(V,\phi)$ as before, and assume that \eqref{eq:ineq2} is satisfied. Then, there is no solution of the equations \eqref{eq:gravvortexeqnonab} on $(\PP^1,V,\phi)$, unless the following balancing condition holds
\[
\frac{2\ell_1 - N_1}{2N_2-\tau} + \frac{2\ell_2 - N_2}{2N_1-\tau} = 0.
\]
\end{theorem}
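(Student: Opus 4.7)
The plan is to deduce the balancing condition as a direct consequence of the Futaki obstruction in Proposition~\ref{prop:futakibis} applied to the specific element $y$ from~\eqref{eq:y}, combined with the explicit evaluation recorded in Lemma~\ref{lem:evaluationfutaki}. In particular, I do not need to analyze the full structure of $\Lie \Aut(\PP^1,V,\phi)$, only to exhibit one nontrivial element on which the Futaki character is computable and produces the desired constraint.

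First I would verify that the element $y = \mathrm{diag}(0,1) \in \mathfrak{gl}(2,\CC)$ can be lifted to a genuine element of $\Lie\Aut(\PP^1, V, \phi)$. As a constant endomorphism, the vertical action of $y$ alone does not preserve $\phi$ on the nose, but because each component $\phi_j = x_0^{N_j - \ell_j} x_1^{\ell_j}$ is a joint eigenvector for the diagonal $\CC^*$-action on $\PP^1$, one may combine $y$ with the horizontal lift of a suitable holomorphic vector field $\check{y}$ on $\PP^1$ (and, if needed, add a central multiple of the identity in $\mathfrak{gl}(2,\CC)$) to obtain a $G^c$-invariant holomorphic vector field on the total space of $V$ that annihilates $\phi$. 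The Futaki character in Proposition~\ref{prop:futakibis} depends only on the image of such a lift, so writing ``$\langle\cF_{\alpha,\alpha},y\rangle$'' is unambiguous.

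With this verified, suppose $(\omega,H)$ is a solution of~\eqref{eq:gravvortexeqnonab} on $(\PP^1,V,\phi)$. By Proposition~\ref{prop:futakibis} the Futaki character $\cF_{\alpha,\alpha}$ vanishes identically on $\Lie\Aut(\PP^1,V,\phi)$, so in particular $\langle \cF_{\alpha,\alpha},y\rangle = 0$. Substituting the formula from Lemma~\ref{lem:evaluationfutaki} gives
\[
2\pi i \alpha\bigl[(2N_1-\tau)(2\ell_1-N_1) + (2N_2-\tau)(2\ell_2-N_2)\bigr] = 0.
\]
Since $\alpha > 0$, the bracket must vanish. Under the hypothesis~\eqref{eq:ineq2} we have $N_1\leq N_2<\tau/2$, so both $2N_1-\tau$ and $2N_2-\tau$ are strictly negative, and dividing the vanishing bracket by the nonzero product $(2N_1-\tau)(2N_2-\tau)$ yields the stated balancing condition.

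The step I expect to require the most care is the identification of $y$ with an element of $\Lie\Aut(\PP^1, V, \phi)$, since the paper's notation suppresses the auxiliary vector field on $\PP^1$; once that is pinned down, the rest is a direct invocation of the Futaki obstruction. The other computational ingredient is Lemma~\ref{lem:evaluationfutaki} itself, whose proof (as the text indicates) proceeds along the lines of~\cite[Lemma 4.6]{AGGP} by substituting the Fubini--Study metric on $\PP^1$ and the block-diagonal ansatz $H = H_1 \oplus H_2$ into the defining integral~\eqref{eq:alphafutakismooth} and computing term by term.
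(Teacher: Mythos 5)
Your proof is correct and takes exactly the paper's route: the paper obtains Theorem~\ref{th:Yangconjecture.2} as an immediate consequence of Proposition~\ref{prop:futakibis} and Lemma~\ref{lem:evaluationfutaki}, which is precisely your argument of setting $\langle\cF_{\alpha,\alpha},y\rangle=0$ and dividing by $(2N_1-\tau)(2N_2-\tau)$, nonzero since \eqref{eq:ineq2} forces $N_1\leq N_2<\tau/2$. Your extra care in lifting $y$ to an honest element of $\Lie\Aut(\PP^1,V,\phi)$ (combining the vertical part with a horizontal lift and a diagonal correction so that the monomial eigenvectors $\phi_j$ are annihilated) is a detail the paper leaves implicit but does not change the argument.
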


\end{document}